\title[Schwarz maps associated with $(2,4,4)$ and $(2,3,6)$]
{Schwarz maps associated with the triangle groups $(2,4,4)$ and $(2,3,6)$}
\date{\today}
\def\middlebar#1{$\setbox0=\hbox{$\hphantom{\hbox{#1}}$} \dp0=0pt
\hbox{#1}\llap{ \raise 4pt\hbox{$\m@th \underline{\raise 5pt \box0}$}}$}
\newcommand{\C}{\mathbb C}
\newcommand{\N}{\mathbb N}
\renewcommand{\H}{\mathbb H}
\renewcommand{\P}{\mathbb P}
\newcommand{\Q}{\mathbb Q}
\newcommand{\Z}{\mathbb Z}
\newcommand{\CF}{\mathcal F}
\newcommand{\CM}{\mathcal M}
\newcommand{\re}{\mathrm{Re}}
\newcommand{\im}{\mathrm{Im}}
\newcommand{\pa}{\partial}
\newcommand{\ex}{\mathbf{e}}
\renewcommand{\a}{\alpha} 
\renewcommand{\b}{\beta} 
\newcommand{\g}{\gamma}
\newcommand{\h}{\vartheta}
\newcommand{\f}{\varphi}
\newcommand{\z}{\zeta}
\newcommand{\G}{\varGamma}
\newcommand{\w}{\omega}
\newcommand{\ii}{i}
\newcommand{\la}{\langle}
\newcommand{\ra}{\rangle}
\newcommand{\tr}{\;^t}
\newcommand{\pr}{\mathrm{pr}}
\newtheorem{theorem}{Theorem}
\newtheorem{proposition}{Proposition}
\newtheorem{lemma}{Lemma}
\newtheorem{cor}{Corollary}
\newtheorem{fact}{Fact}
\newtheorem{remark}{Remark}
\def\comment#1{ }
\theoremstyle{plain}
\numberwithin{equation}{section}
\numberwithin{figure}{section}
\author{Yuto Koguchi}
\address[Koguchi]{
Product Actuarial Team, 
The Gibraltar Life Insurance Co., Ltd., 
2-13-10 Nagata-cho, 
Tokyo 100-8953, Japan
}
\author{Keiji Matsumoto}
\address[Matsumoto]{
   Department of Mathematics,
   Hokkaido University,
   Sapporo 060-0810, Japan
}
\email{matsu@math.sci.hokudai.ac.jp}
\author{Fuko Seto}
\address[Seto]{
   Aomori Prefectural Hachinohe Kita High School,
   8-3 Machimichi, Ohkubo, 
   Hachinohe 031-0833, Japan
}
\keywords{
Schwarz map, theta function, mean iteration}
\subjclass[2010]{14K25, 33C05}
\begin{document}
\maketitle
\begin{abstract}
We study the Schwarz maps with monodromy groups 
isomorphic to the triangle groups $(2,4,4)$ and $(2,3,6)$ and 
their inverses. 
We apply our formulas to the study of mean iterations.
\end{abstract}
\tableofcontents

\section{Introduction}
The Gauss hypergeometric function $F(\a,\b,\g;z)$ is defined by the 
series 
$$
F(\a,\b,\g;z)=\sum_{n=0}^\infty \frac{(\a,n)(\b,n)}{(\g,n)(1,n)}z^n,
$$
where $z$ is the main variable in the unit disk 
$\mathbb{D}=\{z\in \C\mid |z|<1\}$, $\a,\b,\g$ are parameters 
with $\g\ne 0,-1,-2,\dots$, and $(\a,n)=\a(\a+1)\cdots(\a+n-1)$.
This function admits an integral representation 
\begin{equation}
\label{eq:Euler}
\frac{\G(\g)}{\G(\a)\G(\g-\a)}
\int_1^\infty  t^{\b-\g}(t-z)^{-\b}(t-1)^{\g-\a-1}dt,
\end{equation}
and satisfies the hypergeometric differential equation 
\begin{equation}
\label{eq:HGDE}
\CF(\a,\b,\g):z(1-z)f''(z)+\{\g-(\a+\b+1)z\}f'(z)-\a\b f(z)=0,
\end{equation}
which has only singular points of regular type 
at $z=0,1,\infty$. 
The Schwarz map is defined by the continuation to $X=\C-\{0,1\}$ 
of the ratio of two linearly independent solutions to $\CF(\a,\b,\g)$ 
in a small simply connected domain in $X$. 

In this paper, we study the Schwarz maps for two sets of the parameters 
$$(\a,\b,\g)=\big(\frac{1}{4},0,\frac{1}{2}\big),\quad 
\big(\frac{1}{3},0,\frac{1}{2}\big).$$
The monodromy groups of $\CF(\a,\b,\g)$ for these sets of parameters are  
reducible and isomorphic to the triangle groups 
$(2,4,4)$ and $(2,3,6)$, respectively. 
We give circuit matrices generating these groups 
in Corollary \ref{cor:monod}. 
The images of the Schwarz maps are the quotient of 
the complex torus $E_\ii=\C/(\ii\Z+\Z)$ by the 
the multiplicative group $\la \ii\ra=\{\pm1,\pm\ii\}$
for $(\a,\b,\g)= (\dfrac{1}{4},0,\dfrac{1}{2})$, and 
that of $E_\z=\C/(\z\Z+\Z)$ by $\la \z\ra=\{\pm1,\pm\z,\pm\z^2\}$
for $(\a,\b,\g)= (\dfrac{1}{3},0,\dfrac{1}{2})$, 
where $\ii=\sqrt{-1}$ and $\z=\dfrac{1+\sqrt{3}\ii}{2}$. 
We consider elliptic curves 
$$C_\ii: u^4=t^2(t-1),\quad C_\z:u^6=t^3(t-1),$$
and relate these Schwarz maps and the Abel-Jacobi maps 
$$\jmath_\ii:C_\ii\to E_\ii,\quad \jmath_\z:C_\z\to E_\z$$
defined by incomplete elliptic integrals on $C_\ii$ and on $C_\z$.
We express the inverses of these Schwarz maps in terms of the theta function 
$\h_{a,b}(z,\tau)$ with characteristics $a,b$; 
see Theorem \ref{th:inverse} and Theorem \ref{th:(t,u)-rep}.
As corollaries, some $\h_{a,b}(0,\ii)$ and $\h_{a,b}(0,\z)$ 
are evaluated in terms of the Gamma function. 
We study the pull-back of the $(1+\ii)$-multiple on $E_\ii$ and 
that of the $(1+\z)$-multiple $E_\z$ under the 
corresponding Abel-Jacobi maps. 
We apply these results to the study of mean iterations. We show that  
the formula in Theorem \ref{th:(1+i)-multiple}  
yields a limit formula of a mean iteration 
$$
\lim_{n\to\infty} \overbrace{m\circ \cdots \circ m}^{n}(a,b)
=\dfrac{a}{F\big(\frac{1}{4},\frac{1}{2},\frac{5}{4};1-\frac{b^2}{a^2}
\big)^2},
$$
where $a>b>0$ and 
$$m:(a,b)\mapsto \left(\frac{a+b}{2},\sqrt{\frac{a(a+b)}{2}}\right).
$$
We have a similar result from the $(1+\z)$-multiple formula 
on the elliptic curve $E_\z$ in Theorem \ref{th:(1+z)-multiple}.
It is studied in \cite{HKM} that these limit formulas for mean iterations 
can be obtained from transformation formulas for 
the hypergeometric function in \cite{G}. 
We elucidate a geometric background of these limit formulas
as multiplications on the complex tori $E_\ii$ and $E_\z$.

\section{The Schwarz map}
\subsection{Fundamental system of solutions to $\CF(a,b,c)$}
We define the Schwarz map as the ratio of solutions to 
$\CF(\a,\b,\g)$ 
given by the Euler type integral representations 
\begin{align*}
f_1(x)&=\int_1^x t^{\b-\g}(t-x)^{-\b}(t-1)^{\g-\a}\frac{dt}{t-1},\\
f_2(x)&=\int_1^\infty t^{\b-\g}(t-x)^{-\b}(t-1)^{\g-\a}\frac{dt}{t-1},
\end{align*}
where 
$$0<\re(\a)<\re(\g),\quad  \re(\b)<1.$$
For an element $x$ in $U=\{x\in X\mid\max(|x|,|1-x|)<1,\im(x)>0\}$, 
they can be expressed by 
the hypergeometric series. 
By (\ref{eq:Euler}), 
$$f_2(x)=B(\g-\a,\a)\cdot F(\a,\b,\g;x),$$ 
where $B(*,*)$ denotes the beta function.  
By the variable change 
$$s=\frac{x-1}{t-1},\quad\textrm{i.e.}\quad  t=\frac{s+x-1}{s}, \quad 
dt=-\frac{(x-1)ds}{s^2}$$ 
for the integral representation of $f_1(x)$ and (\ref{eq:Euler}), we have
$$
f_1(x)=e^{\pi\ii(\g-\a)}B(\g-\a,1-\b)(1-x)^{\g-\a-\b}\cdot 
F(\g-\a,\g-\b,\g-\a-\b+1;1-x),
$$
where $\theta_1=\arg x$ and $\theta_2=\arg(1-x)$ belong to 
the open interval $(-\pi/2,\pi/2)$, and 
the arguments of $t$, $t-x$,  $t-1$ on the open segments 
$(1,x)$ and $(1,\infty)$ take values in the intervals in 
Table \ref{tab:argument}.
Here pay your attention to the argument of $t-1$ and 
the orientation of the path integral.

\begin{center}
\begin{table}[bht]
\begin{tabular}[bht]{l|cc}
          & $t\in (x,1)$                        & $t\in (1,\infty)$  \\
\hline
$\arg(t)$ &$[\min(0,\theta_1),\max(0,\theta_1)]$& $0$ \\  
$\arg(t-x)$&$\theta_2$& $[\min(0,\theta_2),\max(0,\theta_2)]$\\
$\arg(t-1)$ &$\pi+\theta_2$ & $0$\\[2mm]
\end{tabular}
\caption{Arguments of $t$, $t-x$ and $t-1$}
\label{tab:argument}
\end{table}
\end{center}
\begin{remark}
\label{rem:binZ}
When $\b=0$, the solution $f_1(x)$  is expressed as
$$
f_1(x)=\frac{e^{\pi\ii(\g-\a)}}{\g-\a}\cdot (1-x)^{\g-\a}\cdot 
F(\g-\a,\g,\g-\a+1;1-x)
$$
for $|x-1|<1$, and the solution $f_2(x)$ reduces to a constant   
$$B(\g-\a,\a)=\frac{\G(\g-\a)\G(\a)}{\G(\g)}.$$

\end{remark}

\subsection{Monodromy representation of $\CF(a,b,c)$}
We take a base point $\dot x$ in $U$.
Let $\CM$ be the monodromy representation of $\CF(\a,\b,\g)$ with respect 
to the base point $\dot x$. It is the homomorphism from 
the fundamental group $\pi_1(X,\dot x)$ to the general linear group 
of the local solution space to $\CF(\a,\b,\g)$ on $U$ arising from the 
analytic continuation along a loop with terminal $\dot x$.
We denote the image of $\ell\in \pi_1(X,\dot x)$ by $\CM_\ell$.
Let $\ell_0$ and $\ell_1$ be a loop starting from $\dot x$ 
turning positively around the point $x=0$ and that around the point $x=1$, 
respectively. Since $\pi_1(X,\dot x)$ is generated by $\ell_0$ and 
$\ell_1$, $\CM$ is determined by $\CM_0=\CM_{\ell_0}$ 
and $\CM_1=\CM_{\ell_1}$. 
By the basis $\tr(f_1(x),f_2(x))$,  
the transformations $\CM_0$ and $\CM_1$ are represented by 
matrices $M_0$ and $M_1$. 
That is, the basis $\tr(f_1(x),f_2(x))$ is transformed into 
$$M_i\begin{pmatrix}f_1(x)\\f_2(x)
\end{pmatrix}
$$ 
by the analytic continuation along the loop $\ell_i$. 
They are expressed by the intersection matrix
$$
H=\begin{pmatrix}
\dfrac{\ex(\g-\a)-\ex(\b)}{\ex(\g-\a)-1} &
\dfrac{-\ex(\g-\a)}{\ex(\g-\a)-1}\\[4mm]
\dfrac{-\ex(\b)+1}{\ex(\g-\a)-1}&
\dfrac{-\ex(\g)+1}{(\ex(\g-\a)-1)(\ex(\a)-1)}
\end{pmatrix}
$$
as in \cite{Ma}.

\begin{proposition}
Suppose that 
$$\a,\quad \a-\g,\quad \b-\g\notin \Z.$$
Then we have 
\begin{align*}
M_0&=\lambda_0 I_2-\frac{\lambda_0-1}{e_2 H\; e_2^*}H\; e_2^*e_2
=\begin{pmatrix}
\ex(-\g) & 1-\ex(-\a)
\\
0 & 1
\end{pmatrix},
\\
M_1&=I_2-\frac{1-\lambda_1}{e_1 H\; e_1^*}H\; e_1^*e_1
=\begin{pmatrix}
\ex(\g-\a-\b) & 0\\
-1+\ex(-\b) & 1
\end{pmatrix},
\end{align*}
where $\lambda_0=\ex(-\g)$, $\lambda_1=\ex(\g-\a-\b)$, 
$$I_2=\begin{pmatrix}
1 & 0 \\ 0 & 1
\end{pmatrix},\quad 
e_1=(1,0),\quad e_2=(0,1),\quad e_1^*=\begin{pmatrix}1 \\ 0\end{pmatrix},
\quad e_2^*=\begin{pmatrix}0 \\ 1\end{pmatrix}.
$$
\end{proposition}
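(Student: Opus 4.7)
The plan is to identify $M_0$ and $M_1$ first via the local Frobenius analysis at the singularities combined with the explicit integrals for $f_1, f_2$, and then to check that the pseudoreflection formulas written in terms of $H$ reproduce the same matrices. The local exponents of $\CF(\a,\b,\g)$ at $x=0$ are $0$ and $1-\g$, so $\CM_0$ has eigenvalues $1,\lambda_0=\ex(-\g)$; at $x=1$ they are $0$ and $\g-\a-\b$, so $\CM_1$ has eigenvalues $1,\lambda_1=\ex(\g-\a-\b)$. The hypothesis $\a,\a-\g,\b-\g\notin\Z$ keeps these eigenvalues distinct and prevents logarithmic solutions, so both matrices are diagonalizable pseudoreflections. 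The formula $f_2(x)=B(\g-\a,\a)F(\a,\b,\g;x)$ displays $f_2$ as holomorphic at $x=0$, so $f_2$ is fixed by $\CM_0$; this forces the second row of $M_0$ to be $(0,1)$, and $\det M_0=\lambda_0$ then fixes $(M_0)_{11}=\ex(-\g)$. The formula for $f_1$ near $x=1$ in terms of $(1-x)^{\g-\a-\b}F(\g-\a,\g-\b,\g-\a-\b+1;1-x)$ exhibits $f_1$ as a pure $\lambda_1$-eigenvector of $\CM_1$, pinning down the first row of $M_1$ as $(\lambda_1,0)$ and $(M_1)_{22}=1$.

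To nail down $(M_0)_{12}$, I would write $f_1=c_1\phi_1+c_2\phi_2$ near $x=0$ in the Frobenius basis $\phi_1(x)=F(\a,\b,\g;x)$, $\phi_2(x)=x^{1-\g}F(\a-\g+1,\b-\g+1,2-\g;x)$. The leading coefficient is read off directly as $c_1=f_1(0)=\int_1^0 t^{-\g}(t-1)^{\g-\a-1}\,dt$, and the substitution $s=1-t$ together with the branch choices recorded in Table~\ref{tab:argument} turns this into $c_1=e^{\pi\ii(\g-\a)}B(\g-\a,1-\g)$. Since $\ell_0$ fixes $\phi_1$ and multiplies $\phi_2$ by $\lambda_0$, one obtains $f_1\mapsto\lambda_0 f_1+\tfrac{c_1(1-\lambda_0)}{B(\g-\a,\a)}f_2$, and Euler's reflection formula $\G(s)\G(1-s)=\pi/\sin(\pi s)$ collapses this coefficient to $1-\ex(-\a)$. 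The $(2,1)$-entry of $M_1$ is obtained by the mirror computation at $x=1$: the Gauss connection formula $F(\a,\b,\g;x)=A_1F(\a,\b,\a+\b-\g+1;1-x)+A_2(1-x)^{\g-\a-\b}F(\g-\a,\g-\b,\g-\a-\b+1;1-x)$ expands $f_2$ in the local basis at $x=1$, and the same reflection manipulation yields $-1+\ex(-\b)$.

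Finally, I would verify that the two pseudoreflection formulas in $H$ really reproduce these matrices. For $M_0$, the rank-one correction $\tfrac{\lambda_0-1}{e_2 H e_2^*}H e_2^* e_2$ has image in the column spanned by $He_2^*$ and is nonzero only in the second column; its $(2,2)$-entry is $\lambda_0-1$ by construction, which automatically forces the second row of the proposed matrix to be $(0,1)$, while its $(1,2)$-entry equals $-(\lambda_0-1)H_{12}/H_{22}$. Inserting the explicit entries of $H$ from \cite{Ma} simplifies this to $1-\ex(-\a)$, and the analogous check for $M_1$ uses $H_{21}/H_{11}$ and produces $-1+\ex(-\b)$. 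The main obstacle will be the bookkeeping in the middle paragraph: the phase factor $e^{\pi\ii(\g-\a)}$ coming from the branches recorded in Table~\ref{tab:argument} must interlock with the reciprocals of sines produced by the reflection formula so that the tangle of Gamma values collapses to the single monomial $1-\ex(-\a)$ (respectively $-1+\ex(-\b)$).
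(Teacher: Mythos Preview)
The paper does not actually prove this proposition: it is stated as a fact about the intersection form $H$ and is attributed to \cite{Ma} (a manuscript in preparation), with no argument given beyond displaying $H$. Your proposal is therefore not reproducing the paper's proof but supplying one where the paper offers none. The strategy you outline --- read off the triangular shape of $M_0,M_1$ from the explicit local behaviour of $f_1,f_2$ at $x=0,1$, extract the off-diagonal entries from the connection coefficients and collapse the resulting Beta quotients via Euler's reflection formula, then check agreement with the rank-one expressions in $H$ --- is the standard direct route and is sound.

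One small correction: you claim that the hypotheses $\alpha,\ \alpha-\gamma,\ \beta-\gamma\notin\Z$ force $\lambda_0\ne 1$, $\lambda_1\ne 1$ and rule out logarithms. They do not; for instance $\alpha=\beta=\tfrac14$, $\gamma=\tfrac12$ satisfies all three conditions but gives $\lambda_1=1$, and nothing prevents $\gamma\in\Z$ either. Fortunately your argument does not really use diagonalizability: the triangular shape of $M_0$ comes from $f_2$ being holomorphic at $0$, and that of $M_1$ from $f_1$ being a pure $(1-x)^{\gamma-\alpha-\beta}$-branch at $1$, both of which hold regardless. What the stated hypotheses actually buy is that the denominators $\ex(\gamma-\alpha)-1$ and $\ex(\alpha)-1$ in $H$ are nonzero, so that $H$ and the pseudoreflection formulas are well defined; you should phrase it that way rather than as a non-resonance statement.
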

We use this proposition for $\b\in \Z$ with a base change 
$$
\begin{pmatrix}1 & 0\\ 0 & 1-\ex(\a)
\end{pmatrix}
\begin{pmatrix} f_1(x)\\ f_2(x)
\end{pmatrix}.
$$
\begin{cor}
\label{cor:monod}
In this case, $M_0$ and $M_1$ are transformed into 
$$N_0=\begin{pmatrix}
\ex(-\g) & -\ex(-\a)\\
0 & 1
\end{pmatrix},
\quad 
N_1
=\begin{pmatrix}
\ex(\g-\a) & 0\\
0 & 1
\end{pmatrix},
$$
respectively. When $(\a,\b,\g)=(\frac{1}{4},0,\frac{1}{2})$, 
$N_0$, $N_1$, $(N_0N_1)^{-1}$ are 
$$
\begin{pmatrix}-1 & \ii\\ 0 &1 
\end{pmatrix},\quad 
\begin{pmatrix}\ii & 0\\ 0 &1 
\end{pmatrix},\quad 
\begin{pmatrix}\ii & 1\\ 0 &1 
\end{pmatrix}.
$$
The group generated by these matrices is isomorphic to the 
triangle group $(2,4,4)$, and to the semi-direct product 
$\la \ii\ra \ltimes \Z[\ii]$. 
When $(\a,\b,\g)=(\frac{1}{3},0,\frac{1}{2})$, 
$N_0$, $N_1$, $(N_0N_1)^{-1}$ are 
$$
\begin{pmatrix}-1 & \z\\ 0 &1 
\end{pmatrix},\quad 
\begin{pmatrix}\z & 0\\ 0 &1 
\end{pmatrix},\quad 
\begin{pmatrix}\z^2 & 1\\ 0 &1 
\end{pmatrix},\quad \z=\frac{1+\sqrt{3}\ii}{2}.
$$
The group generated by these matrices is isomorphic to the 
triangle group $(2,3,6)$, and to the semi-direct product 
$\la \z\ra \ltimes \Z[\z]$. 

\end{cor}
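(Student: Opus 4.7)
The plan is to derive $N_0,N_1$ from $M_0,M_1$ via the stated base
change, substitute the two parameter triples, and then analyze the
group generated by the resulting matrices. Setting $\b=0$ in the
proposition makes the $(2,1)$-entry of $M_1$ vanish, so $M_1$ is
diagonal and unchanged by conjugation with the diagonal base-change
matrix $P=\diag(1,1-\ex(\a))$; this gives
$N_1=\diag(\ex(\g-\a),1)$. Conjugation by $P$ divides the
$(1,2)$-entry of $M_0$, which is $1-\ex(-\a)$, by $1-\ex(\a)$, and
the identity $(1-\ex(-\a))/(1-\ex(\a))=-\ex(-\a)$ (verified by
multiplying the denominator by $-\ex(-\a)$) yields the claimed form
of $N_0$. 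Inserting $(\a,\g)=(\tfrac{1}{4},\tfrac{1}{2})$ and
$(\tfrac{1}{3},\tfrac{1}{2})$ reduces these formulas to the six
explicit matrices in the statement; in the second case one uses
$-\ex(-\tfrac{1}{3})=\z$, which follows from $\z=e^{\pi\ii/3}$ and
$\z^3=-1$. The matrices $(N_0N_1)^{-1}$ are obtained by direct
multiplication and inversion.

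For the group-theoretic identifications, I observe that every
generator is upper triangular with lower row $(0,1)$ and
$(1,1)$-entry a root of unity in $\la\ii\ra$ (resp.\ $\la\z\ra$).
The assignment sending such a matrix to its $(1,1)$-entry is
therefore a surjective homomorphism from the generated group $G$ to
$\la\ii\ra$ (resp.\ $\la\z\ra$), split by $N_1$. Its kernel $T$ is a
subgroup of unipotent upper-triangular matrices whose translation
entry lies in $\Z[\ii]$ (resp.\ $\Z[\z]$), the containment being
clear since every generator has entries in those rings. To see that
$T$ exhausts the lattice, one computes that
$(N_0N_1)^{-1}N_1^{-1}$ (resp.\ $(N_0N_1)^{-1}N_1^{-2}$) equals the
translation by $1$; conjugating this element by $N_1$ rotates the
translation vector to $\ii$ (resp.\ $\z$), and $\{1,\ii\}$
(resp.\ $\{1,\z\}$) additively generates $\Z[\ii]$ (resp.\ $\Z[\z]$).
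This establishes the split exact sequence
$1\to\Z[\ii]\to G\to\la\ii\ra\to1$ (resp.\ with $\z$), giving
$G\cong\la\ii\ra\ltimes\Z[\ii]$
(resp.\ $G\cong\la\z\ra\ltimes\Z[\z]$). This semi-direct product is
the classical realization of the Euclidean triangle group $(2,4,4)$
(resp.\ $(2,3,6)$) as the group of orientation-preserving affine
isometries of the square (resp.\ hexagonal) lattice in $\C$.

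The main subtle point is showing that the translation subgroup $T$
is the \emph{full} lattice and not a proper sublattice; the rest is
routine bookkeeping with upper-triangular matrices. Once the
translations by a generating pair of the ring of integers are
exhibited as explicit words in $N_0$ and $N_1$, the identification
with the triangle group follows from the standard correspondence
between the Euclidean triangle groups $(2,4,4)$, $(2,3,6)$ and the
symmetry groups of the Gaussian and Eisenstein lattices.
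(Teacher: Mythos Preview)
Your proof is correct and follows the same route the paper implicitly intends: derive $N_0,N_1$ from $M_0,M_1$ by conjugating with $P=\diag(1,1-\ex(\a))$, specialize the parameters, and read off the group structure. The paper in fact gives no proof of this corollary at all---it merely states the base change and the resulting matrices---so your argument supplies all the details the paper omits, including the explicit verification that the translation subgroup is the full lattice $\Z[\ii]$ (resp.\ $\Z[\z]$) via the words $(N_0N_1)^{-1}N_1^{-1}$ and its $N_1$-conjugate.
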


\section{Theta functions}
\subsection{Basic properties of $\h_{a,b}$}
The theta function with characteristics is defined by 
$$\h_{a,b}(z,\tau)
=\sum_{n\in \Z} \exp(\pi \ii (n+a)^2\tau+2\pi\ii (z+b)),$$
where $z\in \C$ and $\tau\in \H$ are main variables, and 
$a,b$ are rational parameters. 
For a fixed $\tau$, we denote $\h_{a,b}(z,\tau)$ by $\h_{a,b}(z)$.
In this subsection, we collect useful formulas for $\h_{a,b}(z,\tau)$ 
in our study from \cite{I} and \cite{Mu}.

It is easy see that this function satisfies 
\begin{align*}
\h_{a,b}(z,\tau)&=\ex\big(\frac{a^2\tau}{2}+a(z+b)\big)
\h_{0,0}(z+a\tau+b,\tau),\\
\h_{-a,-b}(z,\tau)&=\h_{a,b}(-z,\tau),\\
\h_{a,b}(z+p\tau+q,\tau)&=\ex\big(aq-\frac{p^2\tau}{2}-pz-bp\big)
\h_{a,b}(z,\tau)\\
\h_{a+p,b+q}(z,\tau)&=\ex(aq)\h_{a,b}(z,\tau),\\
\frac{\h_{a,b}(z+c\tau+d,\tau)}{\h_{a',b'}(z+c\tau+d,\tau)}
&=\ex(c(b'-b))\frac{\h_{a+c,b+d}(z,\tau)}{\h_{a'+c,b'+d}(z,\tau)},
\end{align*}
where $p,q\in \Z$ and $a',b'\in \Q$.

It is known that 
$\h_{a,b}(z)=0$ if and only if 
$$\big(-a+p+\frac{1}{2}\big)\tau+\big(-b+q+\frac{1}{2}\big)
\quad (p,q\in \Z),$$
and they are simple zeroes. 
If $(a_1,b_1),\dots,(a_r,b_r)$ and $(a'_1,b'_1),\dots,(a'_r,b'_r)$ 
satisfy 
$$\sum_{i=1}^r (a_i,b_i)\equiv\sum_{i=1}^r (a'_i,b'_i)\bmod \Z^2$$
then the product 
$$
F(z)=\prod_{i=1}^r \frac{\h_{a_i,b_i}(z)}{\h_{a'_i,b'_i}(z)}
$$
becomes an elliptic function with respect to the lattice 
$L_\tau=\Z\tau+\Z$, i.e., it is meromorphic on $\C$ and satisfies
$$F(z)=F(z+1)=F(z+\tau).$$

\begin{fact}[Jacobi's derivative formula]
\label{fact:derivative}
$$\frac{\pa}{\pa z}\h_{\frac{1}{2},\frac{1}{2}}(z,\tau)\Big|_{z=0}
=-\pi\h_{0,0}(0,\tau)\h_{0,\frac{1}{2}}(0,\tau)
\h_{\frac{1}{2},0}(0,\tau).
$$
\end{fact}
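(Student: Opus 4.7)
The plan is to prove Jacobi's derivative formula by converting each theta constant into an infinite product via the Jacobi triple product identity
$$\sum_{n\in\Z} q^{n^2}x^n = \prod_{n\ge 1}(1-q^{2n})(1+xq^{2n-1})(1+x^{-1}q^{2n-1}),\qquad q=e^{\pi\ii\tau},$$
after which the desired identity reduces to an elementary telescoping calculation in $q$.

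Substituting $x=1,-1,q$ in turn and matching with the definition of $\h_{a,b}(0,\tau)$ yields the classical expansions
\begin{align*}
\h_{0,0}(0,\tau) &= \prod_{n\ge 1}(1-q^{2n})(1+q^{2n-1})^2,\\
\h_{0,\frac{1}{2}}(0,\tau) &= \prod_{n\ge 1}(1-q^{2n})(1-q^{2n-1})^2,\\
\h_{\frac{1}{2},0}(0,\tau) &= 2q^{1/4}\prod_{n\ge 1}(1-q^{2n})(1+q^{2n})^2.
\end{align*}
For the odd theta, setting $v=e^{2\pi\ii z}$ and substituting $x=-qv$ into the triple product, together with the simplification $v^{1/2}-v^{-1/2}=2\ii\sin(\pi z)$, produces the product form
$$\h_{\frac{1}{2},\frac{1}{2}}(z,\tau) = -2q^{1/4}\sin(\pi z)\prod_{n\ge 1}(1-q^{2n})(1-q^{2n}v)(1-q^{2n}v^{-1}),$$
whose $z$-derivative at $z=0$ is $\pa_z\h_{\frac{1}{2},\frac{1}{2}}(0,\tau) = -2\pi q^{1/4}\prod_{n\ge 1}(1-q^{2n})^3$.

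Multiplying the three even-characteristic products together and applying the telescoping identity
$$\prod_{n\ge 1}(1+q^{2n-1})(1-q^{2n-1})(1+q^{2n}) = \prod_{n\ge 1}(1-q^{4n-2})(1+q^{2n}) = 1$$
(the first equality from $(1+q^{2n-1})(1-q^{2n-1})=1-q^{4n-2}$; the second from $(1+q^{2n})=(1-q^{4n})/(1-q^{2n})$ combined with the partition $\{2n:n\ge 1\}=\{4n-2:n\ge 1\}\sqcup\{4n:n\ge 1\}$ of positive even integers), one obtains
$$\h_{0,0}(0,\tau)\h_{0,\frac{1}{2}}(0,\tau)\h_{\frac{1}{2},0}(0,\tau) = 2q^{1/4}\prod_{n\ge 1}(1-q^{2n})^3,$$
and comparison with the product expression for $\pa_z\h_{\frac{1}{2},\frac{1}{2}}(0,\tau)$ delivers the asserted equality.

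The main obstacle is pinning down the correct sign and constant in the product formula for $\pa_z\h_{\frac{1}{2},\frac{1}{2}}(0,\tau)$ under the paper's convention $\h_{a,b}(z,\tau)=\sum_n\exp(\pi\ii(n+a)^2\tau+2\pi\ii(n+a)(z+b))$: one must carefully track the phase $\exp(\pi\ii(n+\tfrac12))=\ii(-1)^n$ arising from the characteristic $b=\tfrac12$ and combine it with the $z$-derivative factor $2\pi\ii(n+\tfrac12)$ so that the leading $q^{1/4}$-coefficient comes out to $-2\pi$. A conceptual alternative bypassing the triple product is to verify that the ratio $\pa_z\h_{\frac{1}{2},\frac{1}{2}}(0,\tau)/[\h_{0,0}\h_{0,\frac{1}{2}}\h_{\frac{1}{2},0}](0,\tau)$ is $\tau$-independent via the heat equation $4\pi\ii\,\pa_\tau\h_{a,b}=\pa_z^2\h_{a,b}$, and then evaluate the cusp limit $\tau\to\ii\infty$ to read off the constant $-\pi$ directly from the leading $q$-terms.
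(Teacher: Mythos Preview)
Your proof is correct. The paper does not actually prove this statement: it is listed as a \emph{Fact} drawn from the references \cite{I} and \cite{Mu}, with no argument given. Your approach via the Jacobi triple product is the classical one (essentially the proof found in Mumford's \emph{Tata Lectures}, Chapter~I), and the computations you outline---the product expansions of the three even theta-nulls, the product form of $\h_{\frac12,\frac12}(z,\tau)$, the sign determination through $\ii\,(v^{1/2}-v^{-1/2})=-2\sin(\pi z)$, and the telescoping identity $\prod_{n\ge1}(1-q^{4n-2})(1+q^{2n})=1$---are all accurate and assemble to the stated formula. The alternative you sketch at the end (heat equation plus cusp asymptotics) is likewise a standard route, closer in spirit to Igusa's treatment.
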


\begin{fact}[Transformation formulas]
\label{fact:inversion}
\begin{align*}
\h_{a,b}(z,\tau+1)
&=\ex\big(\frac{a(1-a)}{2}\big)\h_{a,a+b-\frac{1}{2}}(z,\tau),\\
\h_{a,b}\big(\frac{z}{\tau},\frac{-1}{\tau}\big)
&=\ex(ab)\sqrt{\frac{\tau}{\ii}}
\ex\big(\frac{z^2}{2\tau}\big)\h_{b,-a}(z,\tau),
\end{align*}
where $\sqrt{{\tau}/{\ii}}$ is positive when $\tau$ is purely imaginary.
\end{fact}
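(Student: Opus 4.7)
The plan is to deduce both transformation formulas directly from the series definition of $\h_{a,b}(z,\tau)$. The shift $\tau\mapsto\tau+1$ is handled by an elementary manipulation of the exponent, while the inversion $\tau\mapsto-1/\tau$ requires Poisson summation applied term-by-term to a Gaussian in~$n$.

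For the first identity, I would substitute $\tau+1$ for $\tau$ in the defining series and split the exponent as
$$\pi\ii(n+a)^2(\tau+1)=\pi\ii(n+a)^2\tau+\pi\ii n^2+2\pi\ii na+\pi\ii a^2.$$
Using $n^2\equiv n\pmod{2}$, the factor $\ex(n^2/2+na)$ rewrites as $\ex(n(a+\frac12))=\ex((n+a)(a+\frac12))\,\ex(-a(a+\frac12))$, so the $n$-dependent piece is absorbed into a shift of the second characteristic by $a+\frac12$. That produces $\h_{a,b+a+1/2}(z,\tau)$ times a scalar prefactor. I would then apply the quasi-periodicity $\h_{a,b+1}(z,\tau)=\ex(a)\h_{a,b}(z,\tau)$ recorded in Section~3.1 to move $b+a+\frac12$ to $a+b-\frac12$, and collect the surviving exponents to obtain the stated factor $\ex(a(1-a)/2)$.

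For the inversion formula, I would start from
$$\h_{a,b}\!\left(\frac{z}{\tau},\frac{-1}{\tau}\right)=\sum_{n\in\Z}\exp\!\left(-\frac{\pi\ii}{\tau}(n+a)^2+2\pi\ii(n+a)\!\left(\frac{z}{\tau}+b\right)\right),$$
complete the square in $n+a$ to pull the factor $\ex(z^2/(2\tau))\,\ex(bz)\,\ex(b^2\tau/2)$ out of the series, and then apply Poisson summation $\sum_n g(n)=\sum_m\widehat g(m)$ to the remaining Gaussian sum. The relevant Fourier identity is $\int_\R e^{-\pi\alpha t^2+2\pi\ii\xi t}\,dt=\alpha^{-1/2}\,e^{-\pi\xi^2/\alpha}$, applied with $\alpha=\ii/\tau$; since $\tau\in\H$, one has $\re\alpha=\im\tau/|\tau|^2>0$, so the integral converges and produces the prefactor $\sqrt{\tau/\ii}$. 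After reindexing $m\mapsto -m$ in the dual sum, the remaining expression regroups exactly into $\ex(ab)\,\h_{b,-a}(z,\tau)$, completing the identity.

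The main technical point I expect to confront is fixing the branch of $\sqrt{\tau/\ii}$ that $\alpha^{-1/2}$ produces in the Gaussian integral. The convention stated in the Fact (positivity along $\tau=\ii s$, $s>0$) disambiguates this, and I would handle it by first verifying both sides for purely imaginary $\tau$, where each side is manifestly real and positive, and then extending to all of $\H$ by analytic continuation, using that both sides are holomorphic in $\tau$ and that the theta series converges uniformly on compact subsets of $\H$.
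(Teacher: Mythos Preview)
Your argument is correct and is precisely the standard derivation found in the references the paper cites. Note, however, that the paper does not supply its own proof of this statement: it is recorded as a \emph{Fact} and is collected, together with the other basic identities of Section~3.1, from \cite{I} and \cite{Mu}. So there is no ``paper's own proof'' to compare against; you have effectively reproduced the textbook argument (elementary rearrangement for $\tau\mapsto\tau+1$, Poisson summation for $\tau\mapsto-1/\tau$) that those references give. One small remark: for the branch of $\sqrt{\tau/\ii}$ your plan to verify the identity on the positive imaginary axis and analytically continue is fine, but you could equally well fix it directly by tracking the principal square root through the Gaussian integral, since $\ii/\tau$ lies in the right half-plane for $\tau\in\H$.
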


\begin{fact}[Addition formulas, Jacobi's identity]
\label{fact:addition}
\begin{align*}
\h_{0,0}(z_1+z_2)\h_{0,0}(z_1-z_2)\h_{0,0}(0)^2
&= \h_{0,0}(z_1)^2\h_{0,0}(z_2)^2
+\h_{\frac{1}{2},\frac{1}{2}}(z_1)^2\h_{\frac{1}{2},\frac{1}{2}}(z_2)^2\\
&= \h_{0,\frac{1}{2}}(z_1)^2\h_{0,\frac{1}{2}}(z_2)^2
+\h_{\frac{1}{2},0}(z_1)^2\h_{\frac{1}{2},0}(z_2)^2,\\
\h_{0,\frac{1}{2}}(z_1+z_2)\h_{0,\frac{1}{2}}(z_1-z_2)\h_{0,\frac{1}{2}}(0)^2
&= \h_{0,0}(z_1)^2\h_{0,0}(z_2)^2
-\h_{\frac{1}{2},0}(z_1)^2\h_{\frac{1}{2},0}(z_2)^2\\
&= \h_{0,\frac{1}{2}}(z_1)^2\h_{0,\frac{1}{2}}(z_2)^2
-\h_{\frac{1}{2},\frac{1}{2}}(z_1)^2\h_{\frac{1}{2},\frac{1}{2}}(z_2)^2,\\
\h_{\frac{1}{2},0}(z_1+z_2)\h_{\frac{1}{2},0}(z_1-z_2)\h_{\frac{1}{2},0}(0)^2
&= \h_{0,0}(z_1)^2\h_{0,0}(z_2)^2
-\h_{0,\frac{1}{2}}(z_1)^2\h_{0,\frac{1}{2}}(z_2)^2\\
&= \h_{\frac{1}{2},0}(z_1)^2\h_{\frac{1}{2},0}(z_2)^2
-\h_{\frac{1}{2},\frac{1}{2}}(z_1)^2\h_{\frac{1}{2},\frac{1}{2}}(z_2)^2,\\
\h_{\frac{1}{2},\frac{1}{2}}(z_1+z_2)\h_{\frac{1}{2},\frac{1}{2}}
(z_1-z_2)\h_{0,0}(0)^2
&= \h_{\frac{1}{2},\frac{1}{2}}(z_1)^2\h_{0,0}(z_2)^2
-\h_{0,0}(z_1)^2\h_{\frac{1}{2},\frac{1}{2}}(z_2)^2\\
&= \h_{0,\frac{1}{2}}(z_1)^2\h_{\frac{1}{2},0}(z_2)^2
-\h_{\frac{1}{2},0}(z_1)^2\h_{0,\frac{1}{2}}(z_2)^2,\\
\h_{0,0}(0)^4&=\h_{0,\frac{1}{2}}(0)^4+\h_{\frac{1}{2},0}(0)^4.
\end{align*}
\end{fact}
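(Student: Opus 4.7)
The plan is to establish all five lines of the Fact by a single uniform argument: for each identity, fix $z_2$ and view both sides as holomorphic functions of $z_1$ lying in the same two-dimensional space of quasi-theta functions on $\C/(\tau\Z+\Z)$; two suitably chosen evaluations then pin down the linear combination, and Jacobi's identity drops out as a specialization of the first line at $z_1=z_2=0$.

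First I would exploit the quasi-periodicity
$$
\h_{a,b}(z+p\tau+q,\tau)=\ex\bigl(aq-\tfrac{p^2\tau}{2}-pz-bp\bigr)\h_{a,b}(z,\tau)
$$
from the list preceding the Fact. A direct computation shows that, as functions of $z_1$ with $z_2$ held fixed, both $\h_{a,b}(z_1+z_2)\h_{a,b}(z_1-z_2)$ and each square $\h_{a',b'}(z_1)^2$ for $(a',b')\in\{0,\frac{1}{2}\}^2$ are $1$-periodic and satisfy $f(z_1+\tau)=\ex(-\tau-2z_1)\,f(z_1)$. Entire functions obeying these two transformation laws are the global sections of a degree-$2$ line bundle on the elliptic curve $\C/(\tau\Z+\Z)$, so the space $V$ of such functions has dimension $2$. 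The four squares $\h_{0,0}^2$, $\h_{\frac{1}{2},\frac{1}{2}}^2$, $\h_{0,\frac{1}{2}}^2$, $\h_{\frac{1}{2},0}^2$ all lie in $V$, and any two linearly independent among them form a basis. Both displayed expressions on each line of the Fact therefore represent the same element of $V$ written in two different bases, which explains the two equalities per line.

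Next I would determine the coefficients. For the first identity write
$$
\h_{0,0}(z_1+z_2)\h_{0,0}(z_1-z_2)\h_{0,0}(0)^2=A(z_2)\h_{0,0}(z_1)^2+B(z_2)\h_{\frac{1}{2},\frac{1}{2}}(z_1)^2.
$$
Setting $z_1=0$ kills the $B$-term because $\h_{\frac{1}{2},\frac{1}{2}}(0)=0$, and since $\h_{0,0}$ is even this yields $A(z_2)=\h_{0,0}(z_2)^2$. Setting $z_1=\frac{\tau+1}{2}$, a simple zero of $\h_{0,0}$, kills the $A$-term; rewriting the shifted theta values via $\h_{0,0}(z+a\tau+b)=\ex(-\frac{a^2\tau}{2}-a(z+b))\h_{a,b}(z)$ with $(a,b)=(\frac{1}{2},\frac{1}{2})$ and using that $\h_{\frac{1}{2},\frac{1}{2}}$ is odd, the remaining equation pins down $B(z_2)=\h_{\frac{1}{2},\frac{1}{2}}(z_2)^2$. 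The same template, adjusted to an appropriate zero of the corresponding theta function, handles the other three addition formulas. Finally, Jacobi's identity $\h_{0,0}(0)^4=\h_{0,\frac{1}{2}}(0)^4+\h_{\frac{1}{2},0}(0)^4$ falls out on setting $z_1=z_2=0$ in the second form of the first identity, since $\h_{\frac{1}{2},\frac{1}{2}}(0)=0$ removes the left-hand contribution of that characteristic throughout.

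The main obstacle is the bookkeeping at the second evaluation: the shift $z\mapsto z+\frac{\tau+1}{2}$ permutes the four characteristic classes and introduces exponential prefactors through the quasi-periodicity, so one must verify that the coefficient multiplying $\h_{\frac{1}{2},\frac{1}{2}}(z_2)^2$ simplifies to exactly $\h_{0,0}(0)^2$ (and analogously in the other cases, where the signs in the Fact must emerge from the prefactors). Once the two-dimensional framework is in place this is a finite mechanical verification, carried out in detail in \cite{I} and \cite{Mu}.
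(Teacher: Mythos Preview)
Your argument is correct and follows the standard route found in the references the paper cites. Note, however, that the paper does not give its own proof of this statement: it is recorded as a \emph{Fact}, quoted without proof from \cite{I} and \cite{Mu} along with the other basic theta identities collected in that subsection. So there is no paper proof to compare against; your sketch is precisely the kind of proof one finds in those sources, based on the two-dimensionality of the space of functions with the given quasi-periodicity and evaluation at the zeros of the relevant $\h_{a,b}$.
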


\subsection{Formulas for $\tau=\ii$}
In this subsection, we obtain several formulas for $\h_{a,b}(z,\ii)$ 
in the case of $\tau=\ii$.

\begin{lemma} We have
\label{lem:i-times}
$$\h_{a,b}(\ii z,\ii)=\ex(ab)\exp(\pi z^2)\h_{-b,a}(z,\ii),$$
$$\begin{array}{ll}
\h_{0,0}(\ii z,\ii)=\exp(\pi z^2)\h_{0,0}(z,\ii),&
\h_{0,\frac{1}{2}}(\ii z,\ii)=\exp(\pi z^2)\h_{\frac{1}{2},0}(z,\ii),\\[2mm]
\h_{\frac{1}{2},0}(\ii z,\ii)=\exp(\pi z^2)\h_{0,\frac{1}{2}}(z,\ii),&
\h_{\frac{1}{2},\frac{1}{2}}(\ii z,\ii)=\ii\exp(\pi z^2)
\h_{\frac{1}{2},\frac{1}{2}}(\ii z,\ii),
\end{array}
$$
$$\h_{0,\frac{1}{2}}(0,\ii)=\h_{\frac{1}{2},0}(0,\ii)=
\frac{\h_{0,0}(0,\ii)}{\sqrt[4]{2}}.
$$
\end{lemma}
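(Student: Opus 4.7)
The plan is to read everything off the second transformation formula in Fact \ref{fact:inversion}, using the special feature that $-1/\ii=\ii$, and then to pin down the value $\vartheta_{0,\frac12}(0,\ii)=\vartheta_{\frac12,0}(0,\ii)$ via Jacobi's identity.

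First, I would prove the general identity $\vartheta_{a,b}(\ii z,\ii)=\ex(ab)\exp(\pi z^2)\vartheta_{-b,a}(z,\ii)$. Taking $\tau=\ii$ in the inversion formula of Fact \ref{fact:inversion}, one has $-1/\tau=\ii$, $\sqrt{\tau/\ii}=1$, and the substitution $w\mapsto -z$ converts $w/\ii=-\ii w$ into $\ii z$; the quadratic factor becomes $\ex(w^{2}/(2\ii))=\exp(\pi z^{2})$. This yields
\[
\vartheta_{a,b}(\ii z,\ii)=\ex(ab)\exp(\pi z^{2})\,\vartheta_{b,-a}(-z,\ii),
\]
and applying the symmetry $\vartheta_{-a,-b}(z,\tau)=\vartheta_{a,b}(-z,\tau)$ from the list of basic transformations turns $\vartheta_{b,-a}(-z,\ii)$ into $\vartheta_{-b,a}(z,\ii)$, giving the first formula.

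Next, I would substitute the four half-integer pairs $(a,b)=(0,0),(0,\tfrac12),(\tfrac12,0),(\tfrac12,\tfrac12)$ into the general identity. For the off-diagonal cases the right-hand characteristic needs a shift by an element of $\Z^{2}$ to bring it back to the canonical form, using $\vartheta_{a+p,b+q}(z,\tau)=\ex(aq)\vartheta_{a,b}(z,\tau)$; for the choices $(0,\tfrac12)$ and $(\tfrac12,\tfrac12)$ these shifts cost only $\ex(aq)$ with $q=0$, so the phases are trivial, while the factor $\ex(ab)=\ex(\tfrac14)=\ii$ appears exactly in the $(\tfrac12,\tfrac12)$ case. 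This produces the four displayed specializations.

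Finally, to establish the normalization, set $z=0$ in the specialization $\vartheta_{0,\frac12}(\ii z,\ii)=\exp(\pi z^{2})\vartheta_{\frac12,0}(z,\ii)$ to obtain $\vartheta_{0,\frac12}(0,\ii)=\vartheta_{\frac12,0}(0,\ii)$, and then feed this equality into Jacobi's quartic identity $\vartheta_{0,0}(0)^{4}=\vartheta_{0,\frac12}(0)^{4}+\vartheta_{\frac12,0}(0)^{4}$ from Fact \ref{fact:addition}. The two summands collapse to a single term $2\vartheta_{\frac12,0}(0,\ii)^{4}$, and taking the (real positive) fourth root yields $\vartheta_{\frac12,0}(0,\ii)=\vartheta_{0,0}(0,\ii)/\sqrt[4]{2}$.

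No step is genuinely difficult; the only thing to be careful about is the bookkeeping of characteristics modulo $\Z^{2}$ and the sign conventions in the inversion formula. The mild ``obstacle'' is ensuring the $\sqrt{\tau/\ii}$ branch is pinned down (it is $+1$ by the convention stated in Fact \ref{fact:inversion}) so that no spurious sign appears in the quadratic prefactor.
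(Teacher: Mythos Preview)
Your proposal is correct and follows essentially the same route as the paper: specialize the inversion formula in Fact~\ref{fact:inversion} at $\tau=\ii$ (combined with the parity relation $\vartheta_{-a,-b}(z,\tau)=\vartheta_{a,b}(-z,\tau)$) to obtain the general $\ii$-multiplication identity, read off the four half-integer cases, and then use $z=0$ together with Jacobi's quartic identity and positivity of $\vartheta_{0,0}(0,\ii)$, $\vartheta_{0,\frac12}(0,\ii)$ to extract the theta-constant relation. The paper applies the symmetry by starting from $\vartheta_{-a,-b}$ in the inversion formula while you apply it afterwards, but this is the same argument.
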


\begin{proof}
For the $\ii$-multiple formulas, 
we have only to substitute $\tau=\ii$ into the second formula 
for $\h_{-a,-b}$ in Fact \ref{fact:inversion}. 
We have $\h_{0,\frac{1}{2}}(0)=\h_{\frac{1}{2},0}(0)$ by 
substituting $z=0$ into the identity between 
$\h_{0,\frac{1}{2}}(\ii z)$ and $\h_{\frac{1}{2},0}(z)$. 
By Jacobi's identity, we have 
$\h_{0,0}(0)^4=2\h_{0,\frac{1}{2}}(0)^4$. 
Note that $\h_{0,0}(0)$ and $\h_{0,\frac{1}{2}}(0)$ take 
positive real values.
\end{proof}

\begin{lemma}
\label{lem:(i+1)-times}
We have
\begin{align*}
\h_{0,0}((1+\ii)z,\ii)&=
\frac{\h_{0,0}(0,\ii)\h_{0,\frac{1}{2}}(z,\ii)\h_{\frac{1}{2},0}(z,\ii)}
{\exp(\pi\ii(1+\ii) z^2)
\h_{0,\frac{1}{2}}(0,\ii)\h_{\frac{1}{2},0}(0,\ii)},\\
\h_{\frac{1}{2},\frac{1}{2}}((1+\ii)z,\ii)&=\ex\big(\frac{1}{8}\big)
\frac{\h_{0,0}(0,\ii)\h_{0,0}(z,\ii)\h_{\frac{1}{2},\frac{1}{2}}(z,\ii)}
{\exp(\pi\ii(1+\ii) z^2)
\h_{0,\frac{1}{2}}(0,\ii)\h_{\frac{1}{2},0}(0,\ii)},\\
\h_{0,\frac{1}{2}}((1+\ii)z,\ii)\h_{\frac{1}{2},0}((1+\ii)z,\ii)
&=
\frac{\h_{0,0}(z,\ii)^4-\h_{0,\frac{1}{2}}(z,\ii)^2\h_{\frac{1}{2},0}(z,\ii)^2}
{\exp(2\pi\ii(1+\ii) z^2)
\h_{0,\frac{1}{2}}(0,\ii)\h_{\frac{1}{2},0}(0,\ii)}.
%
\end{align*}
\end{lemma}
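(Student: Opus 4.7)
The plan is to derive all three identities from the addition formulas in Fact \ref{fact:addition} by the specialization $z_1 = z$, $z_2 = \ii z$, which makes $z_1 + z_2 = (1+\ii)z$ and $z_1 - z_2 = (1-\ii)z = -\ii(1+\ii)z$. The $\ii z$ arguments appearing on the right-hand sides will be rewritten in terms of $z$-arguments via Lemma \ref{lem:i-times}, and the $(1-\ii)z$ argument on the left will be reduced by the same lemma applied to $w = (1+\ii)z$, noting that $(1\pm\ii)^2 = \pm 2\ii$.

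For the first identity I would start from the second equality on the top row of Fact \ref{fact:addition}. With $z_2 = \ii z$ the two summands on the right become equal after applying $\h_{0,\frac{1}{2}}(\ii z, \ii) = \exp(\pi z^2)\h_{\frac{1}{2},0}(z, \ii)$ and its companion, and collapse to $2\exp(2\pi z^2)\h_{0,\frac{1}{2}}(z)^2\h_{\frac{1}{2},0}(z)^2$. On the left, parity of $\h_{0,0}$ and Lemma \ref{lem:i-times} give $\h_{0,0}((1-\ii)z) = \exp(2\pi\ii z^2)\h_{0,0}((1+\ii)z)$. Solving for $\h_{0,0}((1+\ii)z)^2$ and extracting the square root proves the identity; the sign is fixed by evaluation at $z = 0$, which reduces to $\sqrt{2}\,\h_{0,\frac{1}{2}}(0)\h_{\frac{1}{2},0}(0) = \h_{0,0}(0)^2$, a direct consequence of Jacobi's identity (Fact \ref{fact:addition}, last line) combined with $\h_{0,\frac{1}{2}}(0) = \h_{\frac{1}{2},0}(0)$ from Lemma \ref{lem:i-times}.

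The second identity follows from the fourth-row addition formula by the same recipe, with two new features. The relation $\h_{\frac{1}{2},\frac{1}{2}}(\ii z, \ii)^2 = -\exp(2\pi z^2)\h_{\frac{1}{2},\frac{1}{2}}(z)^2$ contributes a sign that precisely converts the minus on the right-hand side into the needed plus, producing $2\exp(2\pi z^2)\h_{0,0}(z)^2\h_{\frac{1}{2},\frac{1}{2}}(z)^2$. A short computation via Fact \ref{fact:inversion} then gives $\h_{\frac{1}{2},\frac{1}{2}}((1-\ii)z) = -\ii\exp(2\pi\ii z^2)\h_{\frac{1}{2},\frac{1}{2}}((1+\ii)z)$, so $\h_{\frac{1}{2},\frac{1}{2}}((1+\ii)z)^2$ picks up an extra factor $2\ii$; its square root $\sqrt{2\ii} = \sqrt{2}\,\ex(\tfrac{1}{8})$ yields exactly the predicted $\ex(1/8)$ prefactor. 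Since both sides vanish at $z = 0$, the sign of the root must now be fixed by comparing linear Taylor coefficients via Jacobi's derivative formula (Fact \ref{fact:derivative}), where the product $\h_{0,\frac{1}{2}}(0)\h_{\frac{1}{2},0}(0)$ is again eliminated using Lemma \ref{lem:i-times}.

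The third identity requires no square root. Applying the second-row addition formula with $(z_1, z_2) = (z, \ii z)$, the right-hand side becomes $\exp(2\pi z^2)[\h_{0,0}(z)^4 - \h_{\frac{1}{2},0}(z)^2\h_{0,\frac{1}{2}}(z)^2]$, and on the left Lemma \ref{lem:i-times} yields $\h_{0,\frac{1}{2}}((1-\ii)z) = \exp(2\pi\ii z^2)\h_{\frac{1}{2},0}((1+\ii)z)$, so that the product $\h_{0,\frac{1}{2}}((1+\ii)z)\h_{\frac{1}{2},0}((1+\ii)z)$ surfaces naturally. Dividing through and using $\h_{0,\frac{1}{2}}(0)^2 = \h_{0,\frac{1}{2}}(0)\h_{\frac{1}{2},0}(0)$ (again from Lemma \ref{lem:i-times}) finishes this case. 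The main obstacle throughout is the bookkeeping of the exponential phases arising from $(1\pm\ii)^2 = \pm 2\ii$ together with the $\ii$-multiplication rules, and, for the first two identities, the correct branch choice in extracting the square root.
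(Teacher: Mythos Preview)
Your approach for the third identity is exactly what the paper does: specialize the second-row addition formula in Fact~\ref{fact:addition} to $z_1=z$, $z_2=\ii z$, then use Lemma~\ref{lem:i-times} to rewrite $\h_{0,\frac{1}{2}}((1-\ii)z)$ as $\exp(2\pi\ii z^2)\h_{\frac{1}{2},0}((1+\ii)z)$ and to simplify the right-hand side.

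For the first two identities, however, the paper takes a different route. Instead of specializing the addition formulas and extracting a square root, the paper proves the first identity by a Liouville argument: setting $\eta(z)=\exp(\pi\ii(1+\ii)z^2)\h_{0,0}((1+\ii)z,\ii)$, it checks from the quasi-periodicity of $\h_{0,0}$ that $\eta(z)/\big(\h_{0,\frac{1}{2}}(z)\h_{\frac{1}{2},0}(z)\big)$ is doubly periodic for $L_\ii$ and holomorphic (the simple zeros of $\eta$ at $z=\tfrac{1}{2},\tfrac{\ii}{2}$ cancel those of the denominator), hence constant, with the constant fixed at $z=0$. The second identity is then obtained from the first in one line, by the shift $z\mapsto z+\tfrac{1}{2}$.

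Your method is correct but more computational: you must square-root twice, and for the second identity you have to descend to first-order Taylor coefficients via Fact~\ref{fact:derivative} to pin down the sign (since both sides vanish at $z=0$). The paper's approach avoids all square roots and sign ambiguities, and getting the second formula by a half-integer translation is particularly economical. On the other hand, your argument is pleasantly uniform---the same addition-formula trick handles all three identities---and it never invokes the Liouville/divisor machinery.
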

\begin{proof}
We set 
$$\eta(z)=\exp(\pi\ii(1+\ii)z^2)\h_{0,0}((1+\ii)z,\ii).$$
Since $\h_{0,0}(z)$ has simple zero at $z=\dfrac{\ii+1}{2}$, 
the function $\eta(z)$ has simple zero at $z=\dfrac{1}{2},\dfrac{\ii}{2}$.
By using the quasi periodicity of $\h_{0,0}(z)$, we can show 
that 
$$\eta(z+1)=-\eta(z),\quad \eta(z+\ii)=-\exp(-2\pi\ii(\ii+2z))\eta(z).$$
Thus the function 
$$\frac{\eta(z)}{\h_{0,\frac{1}{2}}(z)\h_{\frac{1}{2},0}(z)}$$
is a holomorphic elliptic function with respect to $L_\ii$; it is a constant.
We can determine this constant by putting $z=0$. 
The second formula is obtained by the substitution $z+\dfrac{1}{2}$ 
into $z$ for the first formula.
We show the third formula. 
By Fact \ref{fact:addition} 
for $z_1=z$ and $z_2=\ii z$, we have 
$$\h_{0,\frac{1}{2}}(z+\ii z)\h_{0,\frac{1}{2}}(z-\ii z)\h_{0,\frac{1}{2}}(0)^2
=\h_{0,0}(z)^2\h_{0,0}(\ii z)^2
-\h_{\frac{1}{2},0}(z)^2\h_{\frac{1}{2},0}(\ii z)^2.$$
This identity together with Lemma \ref{lem:i-times} leads the third formula. 
\end{proof}

\subsection{Formulas for $\tau=\z$} 
In this subsection, we obtain several formulas for $\h_{a,b}(z,\z)$ 
in the case of $\tau=\z=\dfrac{1+\sqrt{3}\ii}{2}$.

\begin{lemma}
\label{lem:omega-times}
We have
\begin{align*}
\h_{a,b}(\w z,\z)&=\ex\big(\frac{a^2}{2}+ab-\frac{1}{24}\big)
\ex\big(\dfrac{z^2}{2\z}\big)\h_{-a-b-\frac{1}{2},a}(z,\z),\\
\h_{a,b}(\w^2 z,\z)&=\ex\big(ab+\frac{b^2+b}{2}+\frac{1}{24}\big)
\ex\big(\dfrac{z^2}{2\w}\big)\h_{b,-a-b-\frac{1}{2}}(z,\z),
\end{align*}
$$\begin{array}{ll}
\h_{0,0}(\w z,\z)=\ex\big(\dfrac{-1}{24}\big)\ex\big(\dfrac{z^2}{2\z}\big)
\h_{\frac{1}{2},0}(z,\z),&  
\h_{0,0}(\w^2 z,\z)=\ex\big(\dfrac{1}{24}\big)\ex\big(\dfrac{z^2}{2\w}\big)
\h_{0,\frac{1}{2}}(z,\z),\\
\h_{0,\frac{1}{2}}(\w z,\z)=\ex\big(\dfrac{-1}{24}\big)
\ex\big(\dfrac{z^2}{2\z}\big)\h_{0,0}(z,\z),&
\h_{0,\frac{1}{2}}(\w^2 z,\z)=\ex\big(\dfrac{-1}{12}\big)
\ex\big(\dfrac{z^2}{2\w}\big)\h_{\frac{1}{2},0}(z,\z),\\
\h_{\frac{1}{2},0}(\w z,\z)=\ex\big(\dfrac{1}{12}\big)
\ex\big(\dfrac{z^2}{2\z}\big)\h_{0,\frac{1}{2}}(z,\z),&
\h_{\frac{1}{2},0}(\w^2 z,\z)=\ex\big(\dfrac{1}{24}\big)
\ex\big(\dfrac{z^2}{2\w}\big)\h_{0,0}(z,\z),\\
\h_{\frac{1}{2},\frac{1}{2}}(\w z,\z)=\w\ex\big(\dfrac{z^2}{2\z}\big)
\h_{\frac{1}{2},\frac{1}{2}}(z,\z),&
\h_{\frac{1}{2},\frac{1}{2}}(\w^2 z,\z)=\w^2\ex\big(\dfrac{z^2}{2\w}\big)
\h_{\frac{1}{2},\frac{1}{2}}(z,\z),\\
\end{array}
$$
where $\w=\z^2=\dfrac{-1+\sqrt{3}\ii}{2}$.
\end{lemma}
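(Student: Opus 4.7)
The plan is to reduce the entire lemma to the first general formula, prove that formula from the two transformations in Fact \ref{fact:inversion}, obtain the $\w^2$-formula by iterating, and then specialize to get the eight explicit identities. The arithmetic facts I will use repeatedly are $\w\z=-1$, $\w^2=-\z$, and $1+\w+\w^2=0$; these give $-1/\z=\w=\z-1$ and $1/\z=-\w$. In particular $\z$ is a fixed point of the order-six modular transformation $\tau\mapsto 1-1/\tau$, which is precisely why multiplication by $\w$ interacts cleanly with $\h_{a,b}(\,\cdot\,,\z)$.

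For the first general formula, applying the inversion of Fact \ref{fact:inversion} at $\tau=\z$ gives
$$\h_{a,b}(z/\z,\w)=\ex(ab)\sqrt{\z/\ii}\,\ex(z^2/(2\z))\,\h_{b,-a}(z,\z),$$
and on the principal branch $\z/\ii=e^{-\pi\ii/6}$ yields $\sqrt{\z/\ii}=\ex(-1/24)$. Substituting $z\to -w$ (so that $z/\z\to \w w$) and using $\h_{b,-a}(-w,\z)=\h_{-b,a}(w,\z)$ rewrites this as $\h_{a,b}(\w w,\w)=\ex(ab-1/24)\ex(w^2/(2\z))\h_{-b,a}(w,\z)$. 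I then apply the translation formula of Fact \ref{fact:inversion} at $\tau=\w$ to convert the second slot from $\w$ into $\z=\w+1$, rename the characteristics, and fold the first index via the quasi-period $\h_{a+p,b+q}=\ex(aq)\h_{a,b}$. A short quadratic computation collapses the accumulated phase to $\ex(a^2/2+ab-1/24)$, which is the claimed formula.

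Iterating this formula yields the $\w^2$-formula: the characteristics transform as $(a,b)\mapsto(-a-b-1/2,a)\mapsto(b,-a-b-1/2)$, and the quadratic exponent becomes
$$\ex(\w^2 z^2/(2\z))\,\ex(z^2/(2\z))=\ex((1+\w^2)z^2/(2\z))=\ex(z^2/(2\w)),$$
since $1+\w^2=-\w$ and $-\w/\z=\w^2=1/\w$, while a direct computation of the constants gives $\ex(ab+(b^2+b)/2+1/24)$. The eight specific identities now follow by substituting each $(a,b)\in\{(0,0),(0,\frac{1}{2}),(\frac{1}{2},0),(\frac{1}{2},\frac{1}{2})\}$, folding the first index into $[0,1)$ via the quasi-period (which contributes no phase because $q=0$ in each case), and evaluating the constant; for instance $(\frac{1}{2},\frac{1}{2})$ produces $\ex(1/3)=\w$ in the first case and $\ex(2/3)=\w^2$ in the second. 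The main obstacle is purely phase bookkeeping: identifying $\sqrt{\z/\ii}=\ex(-1/24)$ on the principal branch (needed even though $\z$ is not purely imaginary) and keeping the quadratic exponential consistent through the iteration via $(1+\w^2)/\z=1/\w$.
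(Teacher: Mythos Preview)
Your approach is correct and essentially identical to the paper's: the paper likewise equates the two expressions for $\h_{a,b}(z/\z,-1/\z)$ obtained from the inversion and the $\tau\mapsto\tau+1$ transformation in Fact~\ref{fact:inversion}, then relabels characteristics to get the first formula, and derives the second by substituting $z\mapsto\w^2 z$ into the first (your iteration is equivalent since $\w^3=1$). One small slip in your final paragraph: for the $\w^2$-specializations it is the \emph{second} characteristic index $-a-b-\tfrac12$ that needs folding (the first index $b$ is already in $\{0,\tfrac12\}$), and in the case $(a,b)=(0,\tfrac12)$ this contributes a nontrivial phase $\ex(-\tfrac12)$, which is precisely what turns $\ex(\tfrac{5}{12})$ into the stated $\ex(-\tfrac{1}{12})$.
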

\begin{proof}
Fact \ref{fact:inversion} yields that 
$$\begin{array}{l}
 \hspace{4mm} \h_{a,b}\big(\dfrac{z}{\z},\dfrac{-1}{\z}\big)
=\ex(ab)\ex(\dfrac{-1}{24}\big)\ex\big(\dfrac{z^2}{2\z}\big)
\h_{b,-a}(z,\z),\\[3mm]
=\h_{a,b}(-\w z,\z-1)
=\ex\big(\frac{a(a-1)}{2}\big)\h_{a,-a+b+\frac{1}{2}}(-\w z,\z)=
\ex\big(\frac{a(a-1)}{2}\big)\h_{-a,a-b-\frac{1}{2}}(\w z,\z).
\end{array}
$$
By rewriting $(a',b')=(-a,a-b-\frac{1}{2})$ i.e., 
$(a,b)=(-a',-a'-b'-\frac{1}{2})$ for the identity 
$$
\ex(ab)\ex(\dfrac{-1}{24}\big)\ex\big(\dfrac{z^2}{2\z}\big)
\h_{b,-a}(z,\z)=
\ex\big(\frac{a(a-1)}{2}\big)\h_{-a,a-b-\frac{1}{2}}(\w z,\z),$$
we have the first formula. To get the second formula, 
substitute $z=\w^2 z$ into the first formula. 
These formulas yield the others. 
%
\end{proof}

\begin{lemma}
\label{lem:(z+1)-times}
For $\tau=\z$, we have 
\begin{align*}
\h_{0,0}((1+\z)z)&=
\frac{\ex(\frac{1}{8})\ex((\w^2+\frac{\w}{2})z^2)}{\h_{0,0}(0)^2}
\h_{\frac{1}{2},0}(z)\{\h_{0,0}(z)^2-\ii \h_{0,\frac{1}{2}}(z)^2\},\\
\h_{0,\frac{1}{2}}((1+\z)z)&=
\frac{\ex(\frac{1}{8})\ex((\w^2+\frac{\w}{2})z^2)}{\h_{0,\frac{1}{2}}(0)^2}
\h_{0,0}(z)\{\h_{0,\frac{1}{2}}(z)^2-\h_{\frac{1}{2},0}(z)^2\},\\
\h_{\frac{1}{2},0}((1+\z)z)&=
\frac{\ex((\w^2+\frac{\w}{2})z^2)}{\h_{\frac{1}{2},0}(0)^2}
\h_{0,\frac{1}{2}}(z)\{\h_{0,0}(z)^2+\ii\h_{\frac{1}{2},0}(z)^2\},\\
\h_{\frac{1}{2},\frac{1}{2}}((1+\z)z)&=
\frac{\ex((\w^2+\frac{\w}{2})z^2)}{\h_{0,0}(0)^2}
\h_{\frac{1}{2},\frac{1}{2}}(z)\{\h_{0,0}(z)^2+\ii\h_{0,\frac{1}{2}}(z)^2\}.\\
\end{align*}
\end{lemma}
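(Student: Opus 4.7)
My plan is to apply the addition formulas of Fact \ref{fact:addition} with the substitution $z_1 = z$, $z_2 = \z z$, which produces $z_1 + z_2 = (1+\z)z$ on the left together with $z_1 - z_2 = (1-\z)z = -\w z$; the identity $1 - \z = -\w$ follows immediately from $\z = \ex(\frac{1}{6})$ and $\w = \ex(\frac{1}{3})$. For each of the four characteristics $(a,b) \in \{(0,0),(0,\frac{1}{2}),(\frac{1}{2},0),(\frac{1}{2},\frac{1}{2})\}$ I pick the variant of the addition formula whose left-hand side contains $\h_{a,b}(z_1 + z_2)\h_{a,b}(z_1 - z_2)$; for instance in the $(0,0)$ case I use
$$\h_{0,0}(z_1+z_2)\h_{0,0}(z_1-z_2)\h_{0,0}(0)^2 = \h_{0,\frac{1}{2}}(z_1)^2\h_{0,\frac{1}{2}}(z_2)^2 + \h_{\frac{1}{2},0}(z_1)^2\h_{\frac{1}{2},0}(z_2)^2.$$

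To convert the theta values at $\z z$ and $-\w z$ back into functions of $z$, I invoke $\z = -\w^2$ together with $\h_{-a,-b}(w) = \h_{a,b}(-w)$ and the characteristic-shift rule $\h_{a+p,b+q} = \ex(aq)\h_{a,b}$ to write
$$\h_{c,d}(\z z, \z) = \h_{-c,-d}(\w^2 z, \z), \qquad \h_{c,d}(-\w z, \z) = \h_{-c,-d}(\w z, \z),$$
and then apply Lemma \ref{lem:omega-times} to each factor. The quadratic exponentials that emerge combine as
$$\frac{z^2}{\w} - \frac{z^2}{2\z} = \Big(-\z + \frac{\w}{2}\Big) z^2 = \Big(\w^2 + \frac{\w}{2}\Big) z^2,$$
producing exactly the exponent in the statement, where I have used $1/\w = -\z$ and $1/\z = -\w$ coming from $\z\w = -1$.

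Solving the resulting identity for $\h_{a,b}((1+\z)z)$ yields the claimed formula once the various roots of unity are collected. The $(0,0)$ case produces the coefficient $-\ii$ from the identity $\ex(-\frac{1}{8}) = -\ii\cdot\ex(\frac{1}{8})$, while the $\h_{\frac{1}{2},\frac{1}{2}}$ case yields $+\ii$ via $\z\cdot\ex(\frac{1}{12}) = \ex(\frac{1}{4}) = \ii$ together with an extra sign $\h_{-\frac{1}{2},-\frac{1}{2}}(w) = -\h_{\frac{1}{2},\frac{1}{2}}(w)$. The remaining two formulas follow in the same way from the corresponding variants of Fact \ref{fact:addition}.

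The main obstacle will be the careful bookkeeping of the many roots of unity emerging from Lemma \ref{lem:omega-times} and the characteristic shifts; in particular, the minus sign in $\h_{-\frac{1}{2},-\frac{1}{2}} = -\h_{\frac{1}{2},\frac{1}{2}}$ is what accounts for the sign discrepancies between the four formulas. Aside from that, only routine modular arithmetic in $\frac{1}{24}\Z/\Z$ is required.
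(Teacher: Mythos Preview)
Your approach is correct and coincides with the paper's: it too sets $z_1=z$, $z_2=\z z$ in the addition formulas of Fact~\ref{fact:addition}, rewrites $\h_{c,d}(\z z)=\h_{-c,-d}(\w^2 z)$ and $\h_{c,d}((1-\z)z)=\h_{-c,-d}(\w z)$ via parity, and then applies Lemma~\ref{lem:omega-times} to reduce everything to $\h_{a,b}(z)$, with the same exponential bookkeeping $\frac{1}{\w}-\frac{1}{2\z}=\w^2+\frac{\w}{2}$ that you carried out. The paper only displays the $(0,0)$ case explicitly and leaves the remaining three to the reader, exactly as you propose.
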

\begin{proof}
We apply addition formulas in Fact \ref{fact:addition} to 
$z_1=z$ and $z_2=\z z$, and use Lemma \ref{lem:omega-times}.
For example, we have
$$\h_{0,0}((1+\z)z)\h_{0,0}((1-\z)z)\h_{0,0}(0)^2
= \h_{0,\frac{1}{2}}(z)^2\h_{0,\frac{1}{2}}(\z z)^2
+\h_{\frac{1}{2},0}(z)^2\h_{\frac{1}{2},0}(\z z)^2,
$$
$$
\h_{0,0}((1-\z)z)=\h_{0,0}(-\w z)=\h_{0,0}(\w z)
=\ex\big(\frac{-1}{24}\big) \ex\big(\frac{z^2}{2\z}\big)
\h_{\frac{1}{2},0}(z),
$$
$$
\h_{0,\frac{1}{2}}(\z z)^2=
\h_{0,\frac{1}{2}}(-\w^2z)^2=\h_{0,\frac{1}{2}}(\w^2z)^2=
\ex\big(\frac{-1}{6}\big) \ex\big(\frac{z^2}{\w}\big)
\h_{\frac{1}{2},0}(z)^2,
$$
$$
\h_{\frac{1}{2},0}(\z z)^2=
\h_{\frac{1}{2},0}(-\w^2z)^2=\h_{\frac{1}{2},0}(\w^2z)^2=
\ex\big(\frac{1}{12}\big) \ex\big(\frac{z^2}{\w}\big)
\h_{0,0}(z)^2,
$$
which yield the first formula. 
\end{proof}

\begin{lemma}
\label{lem:hi-theta}
Some theta constants $\h_{a,b}(0,\z)$ are related as follows:
$$
\h_{0,\frac{1}{2}}(0,\z)=\ex\big(\frac{-1}{24}\big)\h_{0,0}(0,\z),\quad 
\h_{\frac{1}{2},0}(0,\z)=\ex\big(\frac{1}{24}\big)\h_{0,0}(0,\z),
$$
$$
\h_{\frac{5}{6},\frac{1}{3}}(0,\z)=
\ex\big(\frac{-1}{8}\big)\h_{\frac{1}{3},\frac{1}{3}}(0,\z),\quad 
\h_{\frac{1}{3},\frac{5}{6}}(0,\z)=
\ex\big(\frac{-17}{24}\big)\h_{\frac{1}{3},\frac{1}{3}}(0,\z).
$$
$$
\h_{\frac{1}{3},\frac{1}{3}}(0,\z)
=\ex\big(\frac{1}{18}\big)\frac{1}{\sqrt[3]{2}}\h_{0,0}(0,\z),\quad 
\h_{\frac{1}{6},\frac{1}{6}}(0,\z)
=\ex\big(\frac{1}{72}\big)\frac{\sqrt[4]{3}}{\sqrt[3]{2}}\h_{0,0}(0,\z).
$$
\end{lemma}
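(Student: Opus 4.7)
My plan is to handle the six identities in three groups of increasing difficulty. Identities (1) and (2) follow directly by setting $z=0$ in the $\w$-formulas for $\h_{0,0}(\w z,\z)$ and $\h_{0,\frac{1}{2}}(\w z,\z)$ from Lemma \ref{lem:omega-times}: the Gaussian factor $\ex(z^2/(2\z))$ trivializes and the stated phases $\ex(-\frac{1}{24})$ and $\ex(\frac{1}{24})$ drop out. For (3) and (4), I will apply the general $\w$- and $\w^2$-formulas of Lemma \ref{lem:omega-times} with $(a,b)=(\frac{1}{3},\frac{1}{3})$ at $z=0$, producing characteristics such as $(-\frac{7}{6},\frac{1}{3})$, which I reduce to $(\frac{5}{6},\frac{1}{3})$ via the shift $\h_{a+p,b+q}(0,\z)=\ex(aq)\h_{a,b}(0,\z)$. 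The phases $\ex(-\frac{1}{8})$ and $\ex(-\frac{17}{24})$ emerge after the resulting $\ex$-factors are combined modulo $\Z$.

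The substantive work is in (5), which I intend to extract from Lemma \ref{lem:(z+1)-times} evaluated at $z_0=(1+\z)/3$. Since $(1+\z)^2=3\z$, one has $(1+\z)z_0=\z$, so the left-hand side of, say, the first formula reduces by quasi-periodicity to $\ex(-\z/2)\h_{0,0}(0,\z)$. On the right-hand side each $\h_{a,b}((1+\z)/3,\z)$ with $(a,b)\in\{(0,0),(0,\tfrac{1}{2}),(\tfrac{1}{2},0)\}$ will be rewritten as $\ex(-\z/18-b/3-\tfrac{1}{9})\,\h_{a+\frac{1}{3},b+\frac{1}{3}}(0,\z)$, which I obtain by two applications of the defining identity $\h_{a,b}(z,\tau)=\ex(\frac{a^2\tau}{2}+a(z+b))\,\h_{0,0}(z+a\tau+b,\tau)$. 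Invoking (3) and (4) to trade $\h_{\frac{5}{6},\frac{1}{3}}$ and $\h_{\frac{1}{3},\frac{5}{6}}$ for $\h_{\frac{1}{3},\frac{1}{3}}$ collapses the equality to a cubic relation $\h_{\frac{1}{3},\frac{1}{3}}(0,\z)^{3}=(\z/2)\,\h_{0,0}(0,\z)^{3}$, and (5) follows on extracting the cube root whose branch is pinned down by the leading term of the $q$-expansion.

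For (6) I will repeat the strategy with $z_0=(1+\z)/6$ and the $\h_{\frac{1}{2},\frac{1}{2}}$ formula of Lemma \ref{lem:(z+1)-times}: now $(1+\z)z_0=\z/2$, so the left-hand side becomes $\h_{\frac{1}{2},\frac{1}{2}}(\z/2,\z)$, which I evaluate by quasi-periodicity combined with (1). On the right the theta constants arising from the $(a,b)\mapsto(a+\frac{1}{6},b+\frac{1}{6})$ shift (such as $\h_{\frac{1}{6},\frac{2}{3}}$ and $\h_{\frac{2}{3},\frac{2}{3}}$) are not on our target list, but each can be brought back to $\h_{\frac{1}{6},\frac{1}{6}}$ or $\h_{\frac{1}{3},\frac{1}{3}}$ by one more use of the $\w$-formula in Lemma \ref{lem:omega-times} at $z=0$ together with the characteristic-shift. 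Substituting (5) to eliminate $\h_{\frac{1}{3},\frac{1}{3}}$ leaves the quadratic identity $\h_{\frac{1}{6},\frac{1}{6}}(0,\z)^{2}=(\sqrt{3}/\sqrt[3]{4})\,\ex(\tfrac{1}{36})\,\h_{0,0}(0,\z)^{2}$, from which (6) follows by the holomorphic square root.

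The main obstacle is not conceptual but computational: at each step one accumulates half a dozen or more phase factors $\ex(\cdot)$ from quasi-periodicity and successive characteristic shifts, and the clean final exponents only appear after the right cancellations modulo $\Z$. Choosing the appropriate formula in Lemma \ref{lem:(z+1)-times} and the right test point $z_0$ so that all intermediate theta constants collapse to the two basic ones $\h_{0,0}(0,\z)$ and $\h_{\frac{1}{3},\frac{1}{3}}(0,\z)$ is what makes the argument work, and requires some care to get right, but no step is essentially harder than the computation already carried out in the proof of Lemma \ref{lem:(z+1)-times}.
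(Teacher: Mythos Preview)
Your plan is correct and follows essentially the same route as the paper: identities (1)--(4) come from Lemma~\ref{lem:omega-times} at $z=0$, and (5), (6) from evaluating Lemma~\ref{lem:(z+1)-times} at $z=(1+\z)/3$ and $z=(1+\z)/6$, yielding exactly the cubic $\h_{0,0}(0,\z)^3=(2/\z)\,\h_{\frac{1}{3},\frac{1}{3}}(0,\z)^3$ and the quadratic $\h_{\frac{1}{6},\frac{1}{6}}(0,\z)^2=\ex(\tfrac{1}{36})\,(\sqrt{3}/\sqrt[3]{4})\,\h_{0,0}(0,\z)^2$ that the paper obtains. The only cosmetic differences are that for (6) the paper uses the $\h_{0,0}$ formula of Lemma~\ref{lem:(z+1)-times} rather than the $\h_{\frac{1}{2},\frac{1}{2}}$ formula (both produce the same quadratic after the reductions you describe), and the paper fixes the cube and square root branches by numerical check rather than by $q$-expansion.
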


\begin{proof}
By substituting $z=0$ and $z=(\z+1)/3$ into formulas 
in Lemma \ref{lem:omega-times}, we have 
the formulas in the first and second lines in this lemma.  
We show the formulas in the third line. 
Substitute $z=(\zeta+1)/3$ and $z=(\zeta+1)/6$  into the first formula in 
Lemma \ref{lem:(z+1)-times}.
Then we have 
\begin{align*}
\h_{0,0}(\z)
&=\frac{\ex(\frac{1}{8})\ex\big((\w^2+\frac{\w}{2})\frac{(\z+1)^2}{9}\big)}
{\h_{0,0}(0)^2}
\h_{\frac{1}{2},0}\big(\frac{\z+1}{3}\big)
\Big\{\h_{0,0}\big(\frac{\z+1}{3}\big)^2
-\ii\h_{0,\frac{1}{2}}\big(\frac{\z+1}{3}\big)^2\Big\},
\\
\h_{0,0}\big(\frac{\z}{2}\big)
&=\frac{\ex(\frac{1}{8})\ex\big((\w^2+\frac{\w}{2})\frac{(\z+1)^2}{36}\big)}
{\h_{0,0}(0)^2}
\h_{\frac{1}{2},0}\big(\frac{\z+1}{6}\big)
\Big\{\h_{0,0}\big(\frac{\z+1}{6}\big)^2
-\ii\h_{0,\frac{1}{2}}\big(\frac{\z+1}{6}\big)^2\Big\}.
\end{align*}
By using shown formulas in this lemma, 
we can transform these identities into 
\begin{align*}
\h_{0,0}(0,\z)^3&=\frac{2}{\z}\h_{\frac{1}{3},\frac{1}{3}}(0,\z)^3,\\
\h_{0,0}(0,\z)^3&=\h_{\frac{1}{3},\frac{1}{3}}(0,\z)
\big(\h_{\frac{1}{6},\frac{1}{6}}(0,\z)^2-\z
\h_{\frac{1}{3},\frac{1}{3}}(0,\z)^2\big).
\end{align*}
Note that the last identity is equivalent 
$$
\h_{\frac{1}{6},\frac{1}{6}}(0,\z)^2
=\frac{\h_{0,0}(0,\z)^3+\z\h_{\frac{1}{3},\frac{1}{3}}(0,\z)^3}
{\h_{\frac{1}{3},\frac{1}{3}}(0,\z)}
=\frac{\z+1}{2}\cdot 
\frac{\h_{0,0}(0,\z)^3}{\h_{\frac{1}{3},\frac{1}{3}}(0,\z)}.
$$
By numerical computations, we can see that the identity 
$$\h_{\frac{1}{3},\frac{1}{3}}(0,\z)
=\ex\big(\frac{1}{18}\big)\frac{1}{\sqrt[3]{2}}\h_{0,0}(0,\z)$$
holds. This identity yields that 
$$ 
\h_{\frac{1}{6},\frac{1}{6}}(0,\z)^2=
\ex(\frac{1}{36})
\frac{\sqrt{3}}{\sqrt[3]{4}}
\h_{0,0}(0,\z)^2.
$$
By numerical computations,  
we can select a square root of $\ex(1/36)$ 
so that identity between 
$\h_{\frac{1}{6},\frac{1}{6}}(0,\z)$ and $\h_{0,0}(0,\z)$ holds.
\end{proof}

\section{The Schwarz map for $(\a,\b,\g)=(1/4,0,1/2)$}
We study the Schwarz map for $(\a,\b,\g)=(1/4,0,1/2)$ and 
its inverse by using an elliptic curve with $\ii$-action 
and $\h_{a,b}(z,\ii)$.

\subsection{Abel-Jacobi map for $C_\ii$}
Let $C_\ii$ be an algebraic curve in $\P^2$ defined by 
$$C_\ii:s_2^4=s_0s_1^2(s_1-s_0).$$
By affine coordinates $(t,u)=(s_1/s_0,s_2/s_0)$, 
$C_\ii$ is expressed by 
$$u^4=t^2(t-1).$$
Note that the point $(t,u)=(0,0)$ in $C_\ii$ is a node. 
We use the same symbol $C_\ii$ for a non-singular model of $C_\ii$. 
By a projection $\pr$ from the non-singular model $C_\ii$ to 
the complex projective line $\P^1$ arising from 
$$C_\ii\ni (t,u)\mapsto t\in \C,$$ 
we regard $C_\ii$ as a branched covering $\P^1$ 
with a covering transformation $\rho_\ii$ arising from a map
$$\rho_\ii:C_\ii\ni (t,u)\mapsto (t,\ii u)\in C_\ii.$$
The branch points of $\pr$ are $t=0,1,\infty$.
Each preimage of $pr^{-1}(1)$ and $pr^{-1}(\infty)$ consists of a point;  
$P_1=pr^{-1}(1)$ and $P_\infty= pr^{-1}(\infty)$ are expressed as
$(t,u)=(1,0)$ and $[s_0,s_1,s_2]=[0,1,0]$, respectively.
On the other hand, the preimage $pr^{-1}(0)$ consists 
of two points, which are denoted by $P_{0,1}$ and $P_{0,2}$. 
The point $P_{0,1}$ corresponds to 
$$\lim_{\substack{x\to 0\\x\in (0,1)}} (x,\sqrt[4]{x^2(x-1)}),\quad 
\arg x^2(x-1)=\pi$$
for $x$ in the open interval $(0,1)$, and $P_{0,2}$ is given by 
$\rho_\ii(P_{0,1})$.
By the Hurwitz formula, $C_\ii$ is an elliptic curve.

Let $I_{1\infty}$ be an oriented path in $C_\ii$ given by 
$$(x,\sqrt[4]{x^2(x-1)})\in C_\ii,\quad x\in [1,\infty],$$
where $\sqrt[4]{x^2(x-1)}$ takes real values for $x\in [1,\infty)$ and
the interval $[1,\infty]$ is naturally oriented.
We define a cycle $B$ by $I_{1\infty}-\rho_\ii\cdot I_{1\infty}$ 
and a cycle $A$ by $\rho_\ii\cdot B$. Since 
$$B\cdot A=1,$$
$A$ and $B$ form a basis of $H_1(C_\ii,\Z)$.

The space of holomorphic $1$-forms on $C_\ii$ is one dimensional
and it is spanned by a form expressed by
$$\f=\frac{udt}{t(t-1)}=
\frac{dt}{\sqrt[4]{t^2(t-1)^3}}.$$
The period integral $\int_B\f$ is evaluated as
$$(1-\ii )\int_1^\infty \frac{dt}{\sqrt[4]{t^2(t-1)^3}}
=(1-\ii )B\big(\frac{1}{4},\frac{1}{4}\big).
$$
On the other hand, we have
$$\int_A\f=\int_{\rho_\ii(B)}\f=\int_B\rho_\ii^*(\f)=\ii \int_B\f.$$
We normalize $\f$ to $\f_1$ as
$$\f_1=\frac{1}{(1-\ii )B(\frac{1}{4},\frac{1}{4})}\f.$$
Then we have 
$$\int_B\f_1=1,\quad \int_A\f_1=\ii $$ and the Abel-Jacobi map
$$\jmath_\ii: C_\ii\ni P=(x,\sqrt[4]{x^2(x-1)}) \mapsto 
z=\int_{P_1}^P \f_1\in E_\ii=\C/L_\ii,$$
where $L_\ii=\Z\ii+\Z \subset \C$. 
The map $\jmath_\ii$ is an isomorphism between $C_\ii$ and $E_\ii$. 

\begin{proposition}
\label{prop:aj-map}
The Abel-Jacobi map $\jmath_\ii$ sends points 
$P_1$, $P_\infty$, $P_{0,1}$ and $P_{0,2}$ 
to 
$$\jmath_\ii(P_1)=0,\quad \jmath_\ii(P_\infty)=\frac{\ii+1}{2},\quad 
\jmath_\ii(P_{0,1})=\frac{\ii}{2},\quad \jmath_\ii(P_{0,2})=\frac{1}{2}
$$
as elements of $E_\ii$.
\end{proposition}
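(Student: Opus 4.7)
The value $\jmath_\ii(P_1)=0$ is immediate from the fact that $P_1$ is the base point. For $\jmath_\ii(P_\infty)$, the plan is to evaluate $\int_{I_{1\infty}}\f_1$ by exploiting the symmetry $\rho_\ii^*\f=\ii\f$, which follows from $\rho_\ii^*u=\ii u$ and $\rho_\ii^*dt=dt$. Combining this with the decomposition $B=I_{1\infty}-\rho_\ii\cdot I_{1\infty}$ gives $\int_B\f=(1-\ii)\int_{I_{1\infty}}\f$, and dividing the already-computed period $\int_B\f=(1-\ii)B(1/4,1/4)$ by $(1-\ii)$ yields $\int_{I_{1\infty}}\f=B(1/4,1/4)$. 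After normalization,
\[
\jmath_\ii(P_\infty)=\int_{I_{1\infty}}\f_1=\frac{1}{1-\ii}=\frac{1+\ii}{2}.
\]

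For $\jmath_\ii(P_{0,1})$, I would integrate $\f_1$ along the lift to $C_\ii$ of the interval $t\in[0,1]$ with the branch of $u$ fixed by $\arg\bigl(t^2(t-1)\bigr)=\pi$; this lift starts at $P_1$ (using the consistent choice $\arg(t-1)=\pi$ as $t$ leaves $1$) and terminates at $P_{0,1}$ by the very definition of the latter. On the lift, $u=e^{\pi\ii/4}\bigl(t^2(1-t)\bigr)^{1/4}$ and $t(t-1)=-t(1-t)$ is real, so the form reduces to
\[
\f=\frac{u\,dt}{t(t-1)}=-\frac{e^{\pi\ii/4}\,dt}{t^{1/2}(1-t)^{3/4}},
\]
and its integral from $t=1$ to $t=0$ equals $e^{\pi\ii/4}B(\tfrac{1}{2},\tfrac{1}{4})$. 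I would then simplify via $e^{\pi\ii/4}/(1-\ii)=\ii/\sqrt{2}$ and the Beta identity $B(1/2,1/4)/B(1/4,1/4)=1/\sqrt{2}$, which follows from Euler reflection $\Gamma(1/4)\Gamma(3/4)=\pi\sqrt{2}$ together with $\Gamma(1/2)=\sqrt{\pi}$; multiplying gives $\jmath_\ii(P_{0,1})=\ii/2$.

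For the last value, I would invoke the symmetry: $\rho_\ii^*\f_1=\ii\f_1$ together with $\rho_\ii(P_1)=P_1$ implies that $\rho_\ii$ on $C_\ii$ corresponds, via $\jmath_\ii$, to multiplication by $\ii$ on $E_\ii$. A local analysis at the node $(t,u)=(0,0)$ splits the curve into two smooth branches $u^2=\pm\ii t\bigl(1+O(t)\bigr)$ which are interchanged by $u\mapsto\ii u$, so $\rho_\ii$ swaps $P_{0,1}$ and $P_{0,2}$, giving
\[
\jmath_\ii(P_{0,2})=\ii\cdot\jmath_\ii(P_{0,1})=-\tfrac{1}{2}\equiv\tfrac{1}{2}\pmod{L_\ii}.
\]
The most delicate step is the branch bookkeeping for $P_{0,1}$: the prescription $\arg(x^2(x-1))=\pi$ pins down a specific fourth root in $u$, and a different choice would interchange the values $\ii/2$ and $1/2$ and hence the labels $P_{0,1}$, $P_{0,2}$; once that is settled, the remainder is Beta-function arithmetic.
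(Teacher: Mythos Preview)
Your proof is correct and follows essentially the same route as the paper: both treat $\jmath_\ii(P_1)$ and $\jmath_\ii(P_\infty)$ as immediate from the period computation, evaluate $\jmath_\ii(P_{0,1})$ by the Beta integral $e^{\pi\ii/4}B(\tfrac12,\tfrac14)/\bigl((1-\ii)B(\tfrac14,\tfrac14)\bigr)=\ii/2$ via Euler reflection, and obtain $\jmath_\ii(P_{0,2})$ from $P_{0,2}=\rho_\ii(P_{0,1})$ together with $\rho_\ii^*\f_1=\ii\f_1$. Your extra local-branch analysis at the node is not needed since the paper takes $P_{0,2}=\rho_\ii(P_{0,1})$ as a definition, but it does no harm.
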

\begin{proof}
It is clear that $\jmath_\ii(P_1)=0$ and $\jmath_\ii(P_\infty)=\dfrac{\ii+1}{2}$.
We have 
\begin{align*}
\jmath_\ii(P_{0,1})
&=\frac{1}{(1-\ii)B(\frac{1}{4},\frac{1}{4})}
\int_1^0 \exp(\pi \ii/4) \frac{\sqrt[4]{s^2(1-s)}ds}{s(s-1)}
=\frac{\ii}{\sqrt{2}}\cdot \frac{\G(\frac{1}{2})^2\G(\frac{1}{4})}
{\G(\frac{1}{4})^2\G(\frac{3}{4})}\\
&=\frac{\ii}{\sqrt{2}}\cdot \frac{\pi}
{\pi/\sin\frac{\pi}{4}}=\frac{\ii}{2}.
\end{align*}
Since $P_{0,2}=\rho_\ii(P_{0,1})$, $\jmath_\ii(P_{0,2})$ is equal to 
$\ii \jmath_\ii(P_{0,1})=\dfrac{-1}{2}\equiv \dfrac{1}{2}$ $\bmod {L_\ii}$.
\end{proof}

We consider the relation between the Abel-Jacobi map $\jmath_\ii$ and  
the Schwarz map 
\begin{equation}
\label{eq:S(1/4,0,1/2)}
x\mapsto \frac{f_1(x)}{(1-\ii)f_2(x)}
=\frac{2\sqrt{2}\ii}{B\big(\frac{1}{4},\frac{1}{4}\big)}
\sqrt[4]{1-x}F(\frac{1}{4},\frac{1}{2},\frac{5}{4},1-x)
\end{equation}
for $\CF(\frac{1}{4},0,\frac{1}{2})$.
By Corollary \ref{cor:monod}, its monodromy group is generated by 
the three transformations 
$$N_0:z\mapsto -z+\ii,\quad N_1:z\mapsto \ii z,\quad 
(N_0N_1)^{-1}:z\mapsto \ii z+1,$$
and this group is isomorphic to the semi-direct product 
$\la \ii\ra \ltimes \Z[\ii]$. 
Note that the information of a branch of $\sqrt[4]{x^2(x-1)}$ is 
lost in the Schwarz map.  
Thus we can regard the Schwarz map as the Abel-Jacobi map $\jmath_\ii$ 
modulo the actions of $\rho_\ii$ and $\ii$; that is 
$$C_\ii/\la \rho_\ii\ra \ni x\mapsto \int_1^x \f_1 \in E_\ii/\la \ii\ra,$$ 
where $\la \rho_\ii\ra$ and $\la \ii\ra$ are the groups generated by 
$\rho_\ii$ and $\ii$, respectively.

\subsection{The inverse of $\jmath_\ii$}
In this subsection, we express the inverse of the Abel-Jacobi map $\jmath_\ii$ 
in terms of $\h_{a,b}(z,\tau)$. 
We fix the variable $\tau$ to $\ii $ and denote $\h_{a,b}(z,\ii )$ 
by $\h_{a,b}(z)$ in short. Since 
the pull-backs ${\jmath_\ii^{-1}}^*(t)$ and ${\jmath_\ii^{-1}}^*(u)$ are 
elliptic functions with respect to the lattice $L_\ii$, 
they can be expressed 
as $${\jmath_\ii^{-1}}^*(t)=\theta_t(z),\quad {\jmath_\ii^{-1}}^*(u)=\theta_u(z)$$
in terms of $\h_{a,b}(z)$.
It turns out that the map 
$$E_\ii\ni z \mapsto (\theta_t(z),\theta_u(z))\in C_\ii$$
is  the inverse of $\jmath_\ii$.

\begin{theorem}
\label{th:inverse}
The inverse of $\jmath_\ii:C_\ii\ni(t,u)\mapsto z\in E_\ii$ is given by
\begin{align*}
t&=2\frac{\h_{0,\frac{1}{2}}(z,\ii)^2\h_{\frac{1}{2},0}(z,\ii)^2}
{\h_{0,0}(z,\ii)^4}
=1-\frac{\h_{\frac{1}{2},\frac{1}{2}}(z,\ii)^4}{\h_{0,0}(z,\ii)^4},\\
u&=-(1-\ii)
\frac{\h_{0,\frac{1}{2}}(z,\ii)
\h_{\frac{1}{2},0}(z,\ii)\h_{\frac{1}{2},\frac{1}{2}}(z,\ii)}
{\h_{0,0}(z,\ii)^3}.\\
\end{align*} 
The holomorphic $1$-form $\f=\dfrac{udt}{t(t-1)}$ on $C_\ii$ corresponds to 
$$
2(1-\ii)\pi\h_{0,0}(0,\ii)^2dz=(1-\ii)B(\frac{1}{4},\frac{1}{4})dz
$$
by the Abel-Jacobi map $\jmath_\ii$.

\end{theorem}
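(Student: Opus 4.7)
The plan is to match divisors of elliptic functions on $E_\ii$ to identify the right-hand sides with ${\jmath_\ii^{-1}}^*(t)$ and ${\jmath_\ii^{-1}}^*(u)$ up to constants, to pin those constants down by a leading-order Taylor expansion at $z=0$, and to read off the $1$-form statement from the same computation.

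I would define $\theta_t(z)=2\h_{0,\frac12}(z)^2\h_{\frac12,0}(z)^2/\h_{0,0}(z)^4$ and $\theta_u(z)$ by the formula in the theorem, and first check via the sum-of-characteristics criterion of Section 3.1 that both descend to elliptic functions on $E_\ii$. Using the zero-locus rule from the same section, their divisors on $E_\ii$ are
$$
(\theta_t)=2\bigl[\tfrac{1}{2}\bigr]+2\bigl[\tfrac{\ii}{2}\bigr]-4\bigl[\tfrac{\ii+1}{2}\bigr],\qquad (\theta_u)=[0]+\bigl[\tfrac{1}{2}\bigr]+\bigl[\tfrac{\ii}{2}\bigr]-3\bigl[\tfrac{\ii+1}{2}\bigr].
$$
On $C_\ii$ the equation $u^4=t^2(t-1)$ makes $u$ a local uniformizer at each of $P_1,P_{0,1},P_{0,2}$ (at the nodal preimages because $t\sim\pm\ii u^2$), so the divisor of $t$ is $2P_{0,1}+2P_{0,2}-4P_\infty$ and that of $u$ is $P_1+P_{0,1}+P_{0,2}-3P_\infty$. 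By Proposition \ref{prop:aj-map}, their pullbacks via $\jmath_\ii^{-1}$ coincide with $(\theta_t)$ and $(\theta_u)$; hence $\theta_t/{\jmath_\ii^{-1}}^*(t)$ and $\theta_u/{\jmath_\ii^{-1}}^*(u)$ are holomorphic elliptic functions on $E_\ii$, i.e.\ constants.

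For $t$, Jacobi's identity at $\tau=\ii$ together with $\h_{0,\frac12}(0,\ii)=\h_{\frac12,0}(0,\ii)$ from Lemma \ref{lem:i-times} gives $\theta_t(0)=1=t(P_1)$, settling the first formula. The equivalence of the two expressions for $t$ amounts to the identity
$$
\h_{0,0}(z,\ii)^4-\h_{\frac12,\frac12}(z,\ii)^4=2\h_{0,\frac12}(z,\ii)^2\h_{\frac12,0}(z,\ii)^2,
$$
which I would derive from the first addition formula of Fact \ref{fact:addition} with $z_1=z$, $z_2=\ii z$: Lemma \ref{lem:i-times} collapses $\h_{0,0}(\ii z)^2=e^{2\pi z^2}\h_{0,0}(z)^2$ and $\h_{\frac12,\frac12}(\ii z)^2=-e^{2\pi z^2}\h_{\frac12,\frac12}(z)^2$; the factorization $(1-\ii)=-\ii(1+\ii)$ combined with Lemma \ref{lem:i-times} rewrites $\h_{0,0}((1-\ii)z)=e^{2\pi\ii z^2}\h_{0,0}((1+\ii)z)$; the first formula of Lemma \ref{lem:(i+1)-times} eliminates $\h_{0,0}((1+\ii)z)$; and Jacobi's identity collapses the residual constant to $2$. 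For $\theta_u$, the constant is fixed by matching linear Taylor coefficients at $z=0$: Jacobi's derivative formula (Fact \ref{fact:derivative}) yields
$$
\theta_u(z)=\frac{(1-\ii)\pi\h_{0,0}(0,\ii)^2}{2}\,z+O(z^3),
$$
while on $C_\ii$ near $P_1$ the local uniformizer is $u$, with $t-1=u^4(1+O(u^4))$ and hence $\f=4\,du+O(u^4du)$; using $z=\int_{P_1}^P\f_1$ with $\f_1=\f/[(1-\ii)B(\tfrac14,\tfrac14)]$ and inverting,
$$
u=\frac{(1-\ii)B(\tfrac14,\tfrac14)}{4}\,z+O(z^5).
$$

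Matching these two linear coefficients simultaneously fixes the scalar $-(1-\ii)$ in the formula for $u$ and yields the identity $B(\tfrac14,\tfrac14)=2\pi\h_{0,0}(0,\ii)^2$. The $1$-form statement follows immediately, since $\jmath_\ii^*(\f_1)=dz$ by construction, so $\jmath_\ii^*(\f)=(1-\ii)B(\tfrac14,\tfrac14)\,dz=2(1-\ii)\pi\h_{0,0}(0,\ii)^2\,dz$. I expect the main obstacle to be the clean execution of the theta identity $\h_{0,0}^4-\h_{\frac12,\frac12}^4=2\h_{0,\frac12}^2\h_{\frac12,0}^2$ at $\tau=\ii$; once it is in hand, the divisor, Taylor, and normalization steps are routine bookkeeping.
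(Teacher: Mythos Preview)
Your divisor-matching skeleton is the same as the paper's, and your normalization of the $t$-constant at $z=0$ is exactly what the paper does. For the second expression $t=1-\h_{\frac12,\frac12}^4/\h_{0,0}^4$, the paper takes a shorter route than yours: it runs a \emph{second} divisor argument directly for the function $t-1$ on $C_\ii$ (divisor $4P_1-4P_\infty$), matches it with $\h_{\frac12,\frac12}(z)^4/\h_{0,0}(z)^4$, and fixes the constant by evaluating at $\jmath_\ii(P_{0,1})=\ii/2$. Your plan to prove the theta identity $\h_{0,0}^4-\h_{\frac12,\frac12}^4=2\h_{0,\frac12}^2\h_{\frac12,0}^2$ from Fact~\ref{fact:addition} and Lemmas~\ref{lem:i-times}--\ref{lem:(i+1)-times} is workable but more laborious.

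There is, however, a genuine gap in your treatment of $u$ and the $1$-form. Write $u=c\cdot\h_{0,\frac12}\h_{\frac12,0}\h_{\frac12,\frac12}/\h_{0,0}^3$ with $c$ to be determined. Your Taylor matching at $z=0$ yields the single equation
\[
-\,c\,\frac{\pi\,\h_{0,0}(0,\ii)^2}{2}\;=\;\frac{(1-\ii)\,B(\tfrac14,\tfrac14)}{4},
\]
i.e.\ $c=-(1-\ii)\cdot\dfrac{B(\tfrac14,\tfrac14)}{2\pi\,\h_{0,0}(0,\ii)^2}$. This cannot ``simultaneously'' fix $c=-(1-\ii)$ \emph{and} prove $B(\tfrac14,\tfrac14)=2\pi\,\h_{0,0}(0,\ii)^2$: one equation, two unknowns. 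You need a second, independent constraint. The paper supplies it via the curve equation: substituting the theta expressions for $t$ and $t-1$ into $u^4=t^2(t-1)$ gives $c^4=-4$, so $c\in\{\ii^k(1+\ii):k=0,1,2,3\}$. Combining this with your Taylor relation (equivalently, the paper's explicit pullback of $\f$ evaluated at $z=0$) and the positivity of $B(\tfrac14,\tfrac14)$ and $\h_{0,0}(0,\ii)^2$ forces $B(\tfrac14,\tfrac14)/(2\pi\,\h_{0,0}(0,\ii)^2)=1$, hence $c=-(1-\ii)$. Add the $c^4=-4$ step and your argument goes through.
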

\begin{proof}
We regard the coordinate $t$ of $C_\ii$ as a meromorphic function on $C_\ii$.
Its divisor is 
$$2P_{0,1}+2P_{0,2}-4P_\infty.$$
We construct an elliptic function for $L_\ii$ with zero of order $2$ at
$z=\dfrac{\ii }{2},\dfrac{1}{2}$ and pole of order $4$ at 
$z=\dfrac{\ii +1}{2}$.
Since 
$$2\cdot\big(0,\frac{1}{2}\big)+2\cdot\big(\frac{1}{2},0\big)\equiv 
4\cdot\big(\frac{1}{2},\frac{1}{2}\big)\bmod \Z^2,
$$
the function 
$$\frac{\h_{0,\frac{1}{2}}(z)^2\h_{\frac{1}{2},0}(z)^2}{\h_{0,0}(z)^4}$$
becomes an elliptic function for $L_\ii$. Moreover, it has zero 
of order $2$ at $z=\dfrac{\ii}{2},\dfrac{1 }{2}$, and 
pole of order $4$ at $z=\dfrac{\ii +1}{2}$, since 
$\h_{a,b}(z)=0$ if and only if 
$z\equiv \big(-a+\dfrac{1}{2}\big)\ii +\big(-b+\dfrac{1}{2}\big)$ $\mod \Z^2$.
Thus the pull-back $F(P)$ of this function under the map $\jmath_\ii$ is 
a constant multiple of $t$ by Proposition \ref{prop:aj-map}. 
Let us determine this constant. 
Lemma \ref{lem:i-times} yields that  
$$
\frac{\h_{0,\frac{1}{2}}(0)^2\h_{\frac{1}{2},0}(0)^2}{\h_{0,0}(0)^4}
=
\frac{\h_{0,\frac{1}{2}}(0)^4}{\h_{0,0}(0)^4}
=\frac{1}{2}.$$
Thus $2F(P)$ is equal to $t$.

Similarly we regard $t-1$ as a meromorphic function on $C_\ii$ whose
divisor is 
$$4P_1 -4P_\infty.$$
The function 
$$\frac{\h_{\frac{1}{2},\frac{1}{2}}(z)^4}{\h_{0,0}(z)^4}$$
becomes an elliptic function for $L_\ii$ with zero of order $4$ at $z=0$ 
and pole of order $4$ at $z=\dfrac{\ii+1}{2}$.
The pull-back of this function under the map $\jmath_\ii$ is 
a constant multiple of $t-1$. By substituting $P_{0,1}$ into this pull-back,
we can determine the constant. We have
$$t-1=-\frac{\h_{\frac{1}{2},\frac{1}{2}}(z)^4}{\h_{0,0}(z)^4}.
$$

By regarding the coordinate $u$ of $C_\ii$ as a meromorphic function on $C_\ii$, 
we see that its divisor is 
$$P_{0,1}+P_{0,2}+P_1-3P_\infty.$$
Thus it is the pull-back of 
$$
c\cdot \frac{\h_{0,\frac{1}{2}}(z)\h_{\frac{1}{2},0}(z)
\h_{\frac{1}{2},\frac{1}{2}}(z)}
{\h_{0,0}(z)^3}
$$
under $\jmath_\ii$, where $c$ is a constant.
Let us determine $c$. 
By $u^4=t^2(t-1)$, we have 
$$
c^4\cdot \frac{\h_{0,\frac{1}{2}}(z)^4\h_{\frac{1}{2},0}(z)^4
\h_{\frac{1}{2},\frac{1}{2}}(z)^4}
{\h_{0,0}(z)^{12}}
=\frac{4\h_{0,\frac{1}{2}}(z)^4\h_{\frac{1}{2},0}(z)^4}
{\h_{0,0}(z)^{8}}\cdot 
\frac{-\h_{\frac{1}{2},\frac{1}{2}}(z)^4}{\h_{0,0}(z)^{4}},
$$
which yields that $c^4=-4$, i.e., $c=\ii^k\cdot (1+\ii)$ for some 
$k\in \{0,1,2,3\}$.

By the expressions $t$, $t-1$ and $u$ 
in terms of $\h_{a,b}(z)$, it turns out that 
the holomorphic $1$-from $\f=\dfrac{udt}{t(t-1)}$ corresponds to 
\begin{align*}
& \ii^k(1+\ii)\cdot \frac{\h_{0,\frac{1}{2}}(z)\h_{\frac{1}{2},0}(z)
\h_{\frac{1}{2},\frac{1}{2}}(z)}{\h_{0,0}(z)^3}\cdot 
\frac{\h_{0,0}(z)^4}{2\h_{0,\frac{1}{2}}(z)^2\h_{\frac{1}{2},0}(z)^2}\cdot
\frac{-\h_{0,0}(z)^4}{\h_{\frac{1}{2},\frac{1}{2}}(z)^4}\\
& \cdot 
\frac{4\{\h_{0,0}(z)^3\h_{0,0}(z)'\h_{\frac{1}{2},\frac{1}{2}}(z)^4
-\h_{\frac{1}{2},\frac{1}{2}}(z)^3\h_{\frac{1}{2},\frac{1}{2}}(z)'
\h_{0,0}(z)^4\}}{\h_{0,0}(z)^8}dz\\
&=-2\ii^k(1+\ii)\cdot \frac{\{\h_{0,0}(z)'\h_{\frac{1}{2},\frac{1}{2}}(z)
-\h_{\frac{1}{2},\frac{1}{2}}(z)'\h_{0,0}(z)\}}
{\h_{0,\frac{1}{2}}(z)\h_{\frac{1}{2},0}(z)}dz,
\end{align*}
which should be a constant multiple of $dz$.
By putting $z=0$ and using Fact \ref{fact:derivative}, we have
$$\f=-2\ii^k(1+\ii)\pi\h_{0,0}(0)^2\jmath_\ii^*(dz).$$
Since $\h_{0,0}(0)^2$ and 
$$B\big(\frac{1}{4},\frac{1}{4}\big)=
\int_1^\infty \f=\int_{\jmath_\ii(P_1)}^{\jmath_\ii(P_\infty)}
-2\ii^k(1+\ii)\pi\h_{0,0}(0)^2dz
=-2\ii^k(1+\ii)\pi\h_{0,0}(0)^2\cdot \frac{1+\ii}{2}$$
are positive real, $k$ is equal to $1$. Hence we 
have the expressions of $u$ and $\f$.
\end{proof}

\begin{cor}
Let $z\in E_\ii$ be the image of $(t,u)\in C_\ii$ under 
the Abel-Jacobi map $\jmath_\ii$. Then we have 
$$
\ii\frac{u^2}{t}=\frac{\h_{\frac{1}{2},\frac{1}{2}}(z)^2}
{\h_{0,0}(z)^2},\quad 
1+\ii\frac{u^2}{t}=\sqrt{2}\frac{\h_{0,\frac{1}{2}}(z)^2}
{\h_{0,0}(z)^2},\quad 
1-\ii\frac{u^2}{t}=\sqrt{2}\frac{\h_{\frac{1}{2},0}(z)^2}
{\h_{0,0}(z)^2}.
$$
Moreover, $\h_{a,b}(z)$'s satisfy relations
$$
\sqrt{2}\h_{0,\frac{1}{2}}(z)^2=
\h_{0,0}(z)^2+\h_{\frac{1}{2},\frac{1}{2}}(z)^2,\quad
\sqrt{2}\h_{\frac{1}{2},0}(z)^2=
\h_{0,0}(z)^2-\h_{\frac{1}{2},\frac{1}{2}}(z)^2.
$$
\end{cor}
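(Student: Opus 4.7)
The plan is to derive the first identity $\ii u^{2}/t = \h_{\frac12,\frac12}(z)^{2}/\h_{0,0}(z)^{2}$ by a one-line algebraic calculation from Theorem \ref{th:inverse}, then to prove the two theta identities in the ``moreover'' clause using Fact \ref{fact:addition} specialized at $z_{2}=0$ together with the constant relation in Lemma \ref{lem:i-times}, and finally to combine these to obtain the remaining two identities.

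First I would square the expression for $u$ in Theorem \ref{th:inverse}; using $(1-\ii)^{2}=-2\ii$ gives
$$u^{2} = -2\ii\,\frac{\h_{0,\frac12}(z)^{2}\h_{\frac12,0}(z)^{2}\h_{\frac12,\frac12}(z)^{2}}{\h_{0,0}(z)^{6}}.$$
Dividing by $t = 2\,\h_{0,\frac12}(z)^{2}\h_{\frac12,0}(z)^{2}/\h_{0,0}(z)^{4}$ and multiplying by $\ii$ immediately yields $\ii u^{2}/t = \h_{\frac12,\frac12}(z)^{2}/\h_{0,0}(z)^{2}$. As a sanity check, this is consistent with the defining equation $u^{4}=t^{2}(t-1)$, which gives $(\ii u^{2}/t)^{2}=1-t=\h_{\frac12,\frac12}(z)^{4}/\h_{0,0}(z)^{4}$ by the second expression for $t$ in Theorem \ref{th:inverse}.

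Next I would specialize Fact \ref{fact:addition} at $z_{2}=0$. The second addition formula (for $\h_{0,0}(z_{1}+z_{2})\h_{0,0}(z_{1}-z_{2})$) gives
$$\h_{0,0}(z)^{2}\h_{0,0}(0)^{2} = \h_{0,\frac12}(z)^{2}\h_{0,\frac12}(0)^{2} + \h_{\frac12,0}(z)^{2}\h_{\frac12,0}(0)^{2},$$
and the addition formula for $\h_{\frac12,\frac12}(z_{1}+z_{2})\h_{\frac12,\frac12}(z_{1}-z_{2})$ gives
$$\h_{\frac12,\frac12}(z)^{2}\h_{0,0}(0)^{2} = \h_{0,\frac12}(z)^{2}\h_{\frac12,0}(0)^{2} - \h_{\frac12,0}(z)^{2}\h_{0,\frac12}(0)^{2}.$$
Inserting $\h_{0,\frac12}(0,\ii)^{2}=\h_{\frac12,0}(0,\ii)^{2}=\h_{0,0}(0,\ii)^{2}/\sqrt{2}$ from Lemma \ref{lem:i-times} and cancelling $\h_{0,0}(0)^{2}$ produces
$$\sqrt{2}\,\h_{0,0}(z)^{2} = \h_{0,\frac12}(z)^{2} + \h_{\frac12,0}(z)^{2}, \qquad \sqrt{2}\,\h_{\frac12,\frac12}(z)^{2} = \h_{0,\frac12}(z)^{2} - \h_{\frac12,0}(z)^{2}.$$
Taking the sum and difference of these two identities yields exactly the two ``moreover'' identities $\sqrt{2}\,\h_{0,\frac12}(z)^{2}=\h_{0,0}(z)^{2}+\h_{\frac12,\frac12}(z)^{2}$ and $\sqrt{2}\,\h_{\frac12,0}(z)^{2}=\h_{0,0}(z)^{2}-\h_{\frac12,\frac12}(z)^{2}$.

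Finally, substituting the first identity into $1\pm\ii u^{2}/t = 1\pm\h_{\frac12,\frac12}(z)^{2}/\h_{0,0}(z)^{2}$ and applying the ``moreover'' identities just proved delivers the two remaining formulas at once. I do not anticipate any real obstacle: every step is direct algebra once Theorem \ref{th:inverse}, Fact \ref{fact:addition}, and Lemma \ref{lem:i-times} are in hand. The only point demanding care is the sign arising from $(1-\ii)^{2}=-2\ii$, which is precisely what supplies the factor of $\ii$ on the left-hand side of the first identity.
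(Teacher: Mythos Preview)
Your argument is correct but runs in essentially the opposite direction from the paper's. The paper first establishes the second identity $1+\ii u^2/t=\sqrt{2}\,\h_{0,\frac12}(z)^2/\h_{0,0}(z)^2$ by a divisor argument: one checks that $1+\ii u^2/t$, viewed as a meromorphic function on $C_\ii$, has a double zero at $P_{0,1}$ (this requires tracking the branch of $u$ so that $\lim_{t\to 0}\ii u^2/t=-1$) and a double pole at $P_\infty$, hence is a constant times the pull-back of $\h_{0,\frac12}(z)^2/\h_{0,0}(z)^2$; the constant is fixed by evaluating at $z=0$. The third identity then follows by applying the covering transformation $\rho_\ii$, and only afterwards are the ``moreover'' relations obtained by eliminating $\ii u^2/t$ from the three identities.

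Your route---proving the theta relations first via the addition formulas at $z_2=0$ and the constant equality from Lemma~\ref{lem:i-times}, then reading off the second and third identities algebraically---is cleaner in that it sidesteps the branch check at $P_{0,1}$ entirely and never touches divisors. The trade-off is that the paper's method makes the geometric content (zeros and poles on the curve) visible, and that same divisor technique is what drives the parallel argument for $C_\z$ in Lemma~\ref{lem:2-ratio}.
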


\begin{proof}
The first identity is a direct consequence of Theorem \ref{th:inverse}.
The right hand side of the second identity is an elliptic function with
respect to $L_\ii$. It has zero of order $2$ at $\jmath_\ii(P_{0,1})$ 
and pole of order $2$ at $\jmath_\ii(P_{\infty})$. Since $P_{0,1}$ corresponds 
to the limit as $t\to 0$ given by the branch of $u$ with 
$\arg(u)=\dfrac{\pi}{4}$ 
on the interval $(0,1)$, $\lim\limits_{t\to 0}\ii\dfrac{u^2}{t}=-1$. 
By comparing the zero and pole of both functions, 
$1+\ii\dfrac{u^2}{t}$ is a constant multiple of the pull-back of
$\dfrac{\h_{0,\frac{1}{2}}(z)^2}
{\h_{0,0}(z)^2}$ under $\jmath_\ii$. We can determine this constant 
by the substitution $z=0$.
The third identity is obtained by the action of $\rho_\ii$ on the second 
identity. By eliminating $\ii\dfrac{u^2}{t}$ from these identities,
we have the relations among $\h_{a,b}(z)$'s
\end{proof}

\begin{cor}
We have 
$$\h_{0,0}(0,\ii)=\frac{\G(\frac{1}{4})}{\sqrt[4]{4\pi^3}}
=\frac{\sqrt[4]{\pi}}{\G(\frac{3}{4})},\quad 
\h_{0,\frac{1}{2}}(0,\ii)=\h_{\frac{1}{2},0}(0,\ii)=
\frac{\G(\frac{1}{4})}{\sqrt[4]{(2\pi)^3}}
=\frac{\sqrt[4]{\pi}}{\sqrt[4]{2}\G(\frac{3}{4})}.
$$
\end{cor}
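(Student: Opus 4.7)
The plan is to read off $\h_{0,0}(0,\ii)^2$ directly from the period identity already embedded in Theorem \ref{th:inverse}, and then deduce the alternative presentations by elementary Gamma function manipulations and the square-root relation from Lemma \ref{lem:i-times}.

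First I would equate the two expressions given in the last line of Theorem \ref{th:inverse} for the pull-back of $\f$, namely
$$2(1-\ii)\pi\h_{0,0}(0,\ii)^2=(1-\ii)B\!\left(\tfrac{1}{4},\tfrac{1}{4}\right),$$
which rearranges to $\h_{0,0}(0,\ii)^2=B(\tfrac14,\tfrac14)/(2\pi)$. Expanding the beta value as $\G(\tfrac14)^2/\G(\tfrac12)=\G(\tfrac14)^2/\sqrt{\pi}$, this becomes $\h_{0,0}(0,\ii)^2=\G(\tfrac14)^2/(2\pi^{3/2})$. Since the theta constant $\h_{0,0}(0,\ii)=\sum_{n\in\Z}e^{-\pi n^2}$ is manifestly positive, the positive square root yields $\h_{0,0}(0,\ii)=\G(\tfrac14)/\sqrt[4]{4\pi^3}$.

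Next I would convert to the second presentation via the reflection formula $\G(\tfrac14)\G(\tfrac34)=\pi/\sin(\pi/4)=\pi\sqrt{2}$. Substituting $\G(\tfrac14)=\pi\sqrt{2}/\G(\tfrac34)$ into $\G(\tfrac14)/\sqrt[4]{4\pi^3}$ and simplifying (using $\sqrt[4]{4\pi^3}=\sqrt{2}\,\pi^{3/4}$) produces $\sqrt[4]{\pi}/\G(\tfrac34)$, matching the claimed alternative form.

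Finally, the identities for $\h_{0,\frac12}(0,\ii)$ and $\h_{\frac12,0}(0,\ii)$ follow immediately by dividing the expressions just obtained by $\sqrt[4]{2}$, courtesy of the equality $\h_{0,\frac12}(0,\ii)=\h_{\frac12,0}(0,\ii)=\h_{0,0}(0,\ii)/\sqrt[4]{2}$ established in Lemma \ref{lem:i-times}. There is no substantive obstacle: the whole argument is a direct reading of the normalization constant in Theorem \ref{th:inverse}; the only care needed is the consistent choice of positive real fourth roots, which is legitimate since all the relevant theta constants at $z=0$, $\tau=\ii$ take positive real values.
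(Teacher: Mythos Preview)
Your proof is correct and follows essentially the same route as the paper: extract $2\pi\h_{0,0}(0,\ii)^2=B(\tfrac14,\tfrac14)=\G(\tfrac14)^2/\sqrt{\pi}$ from the period identity in Theorem \ref{th:inverse}, take the positive square root, rewrite via the reflection formula $\G(\tfrac14)\G(\tfrac34)=\pi\sqrt{2}$, and then invoke Lemma \ref{lem:i-times} for the remaining constants. The only addition you make is the explicit observation that $\h_{0,0}(0,\ii)=\sum_n e^{-\pi n^2}>0$, which is implicit in the paper's remark that $\h_{0,0}(0)$ is positive.
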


\begin{proof}
By Theorem \ref{th:inverse}, 
we have 
$$2\pi\h_{0,0}(0)^2=B(\frac{1}{4},\frac{1}{4})
=\frac{\G\big(\frac{1}{4}\big)^2}{\sqrt{\pi}}.$$
Note that $\h_{0,0}(0)$ and $\G\big(\frac{1}{4}\big)$ are positive.  
To show the rest, use the inversion formula for the $\G$-function 
and Lemma \ref{lem:i-times}.
\end{proof}

\begin{cor}
\label{cor:inv-Schwarz}
The inverse of the Schwarz map (\ref{eq:S(1/4,0,1/2)}) 
for $\CF(\frac{1}{4},0,\frac{1}{2})$ 
is given by
$$
x=2\frac{\h_{0,\frac{1}{2}}(z)^2\h_{\frac{1}{2},0}(z)^2}{\h_{0,0}(z)^4}
=1-\frac{\h_{\frac{1}{2},\frac{1}{2}}(z)^4}{\h_{0,0}(z)^4}.
$$
\end{cor}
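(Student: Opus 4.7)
The plan is to derive the corollary directly from Theorem \ref{th:inverse} by exploiting the factorization of the Schwarz map through the Abel-Jacobi map $\jmath_\ii$ that was established just before the statement. Recall from that discussion that the projection $\pr:C_\ii\to X$, $(t,u)\mapsto t$, induces a commutative diagram in which the Schwarz map (\ref{eq:S(1/4,0,1/2)}) is identified with the composition
\[
X\cong C_\ii/\la\rho_\ii\ra \xrightarrow{\ \jmath_\ii\ } E_\ii/\la\ii\ra,
\]
once we remember that the branch choice in $u=\sqrt[4]{x^2(x-1)}$ is killed both by $\la\rho_\ii\ra$ on the domain and by $\la\ii\ra$ on the target. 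Inverting this composition therefore amounts to writing the first coordinate $t$ of $\jmath_\ii^{-1}(z)$ in terms of $z$, and this is exactly the content of Theorem \ref{th:inverse}.

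The only thing requiring verification is that the two expressions
\[
2\frac{\h_{0,\frac{1}{2}}(z)^2\h_{\frac{1}{2},0}(z)^2}{\h_{0,0}(z)^4}
\quad\text{and}\quad
1-\frac{\h_{\frac{1}{2},\frac{1}{2}}(z)^4}{\h_{0,0}(z)^4}
\]
are honestly well defined on the quotient $E_\ii/\la\ii\ra$, so that they descend from a function on $E_\ii$ to the inverse of the Schwarz map on $X$. I would do this by a direct calculation using Lemma \ref{lem:i-times}: each factor $\h_{a,b}(\ii z,\ii)$ picks up the scalar $\exp(\pi z^2)$ (and, in the case of $\h_{\frac{1}{2},\frac{1}{2}}$, an extra $\ii$ which becomes $\ii^4=1$ after taking fourth powers), and the characteristics $(0,\tfrac12)$ and $(\tfrac12,0)$ are swapped. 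In both displayed expressions the exponential factors cancel against the denominator $\h_{0,0}(z)^4$, and the swap of characteristics leaves the symmetric combinations invariant; hence the expressions are genuinely $\la\ii\ra$-invariant.

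Combining these two observations gives the corollary: the inverse of the Schwarz map is obtained by taking Theorem \ref{th:inverse} and reading off $x=t$. I do not anticipate a real obstacle here, since the nontrivial content (identifying the theta-function formulas for $t$ and choosing the correct normalizing constants such as the factor $2$ and the sign) has already been handled inside the proof of Theorem \ref{th:inverse}; the present corollary is essentially a rephrasing of that theorem in the language of the Schwarz map.
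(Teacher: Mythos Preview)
Your proposal is correct and follows essentially the same approach as the paper: the paper's proof simply says the result is clear from Theorem \ref{th:inverse} and notes that the $\la\ii\ra$-invariance can be checked via Lemma \ref{lem:i-times}, which is exactly what you have spelled out in more detail.
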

\begin{proof}
It is clear by Theorem \ref{th:inverse}. We can check this map 
is invariant under the action of $\la \ii\ra$ by Lemma \ref{lem:i-times}.
\end{proof}

\begin{cor} For any point $z$ around $0$, we have 
\label{cor:HGF-Theta}
$$-\frac{2\sqrt{2\pi}}{\G\big(\frac{1}{4}\big)^2}\cdot 
\frac{\h_{\frac{1}{2},\frac{1}{2}}(z,\ii)}{\h_{0,0}(z,\ii)}\cdot 
F\big(\frac{1}{4},\frac{1}{2},\frac{5}{4};
\frac{\h_{\frac{1}{2},\frac{1}{2}}(z,\ii)^4}{\h_{0,0}(z,\ii)^4}\big)
=z.
$$
\end{cor}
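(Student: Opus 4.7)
The plan is to invert the relation of Corollary \ref{cor:inv-Schwarz} and substitute it directly into the Schwarz-map formula (\ref{eq:S(1/4,0,1/2)}). Writing the latter as
$$z=\frac{2\sqrt{2}\ii}{B\big(\frac{1}{4},\frac{1}{4}\big)}\sqrt[4]{1-x}\,F\big(\tfrac{1}{4},\tfrac{1}{2},\tfrac{5}{4};1-x\big),$$
I would substitute $1-x=\h_{\frac{1}{2},\frac{1}{2}}(z)^{4}/\h_{0,0}(z)^{4}$ from Corollary \ref{cor:inv-Schwarz} and replace $B(\frac{1}{4},\frac{1}{4})$ by $\G(\frac{1}{4})^{2}/\sqrt{\pi}$, so that the prefactor simplifies to $\dfrac{2\sqrt{2\pi}\,\ii}{\G(\frac{1}{4})^{2}}$. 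Everything then reduces to fixing the correct fourth root of $\h_{\frac{1}{2},\frac{1}{2}}(z)^{4}/\h_{0,0}(z)^{4}$ on a neighborhood of $z=0$.

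To determine the branch I would compare leading-order behavior at $z=0$, which corresponds to $x=1$. Fact \ref{fact:derivative} together with the evaluations $\h_{0,\frac{1}{2}}(0,\ii)=\h_{\frac{1}{2},0}(0,\ii)=\h_{0,0}(0,\ii)/\sqrt[4]{2}$ from Lemma \ref{lem:i-times} gives
$$\h_{\frac{1}{2},\frac{1}{2}}(z,\ii)=-\frac{\pi}{\sqrt{2}}\h_{0,0}(0,\ii)^{3}\,z+O(z^{3}),$$
so combined with $2\pi\h_{0,0}(0,\ii)^{2}=B(\frac{1}{4},\frac{1}{4})$ from Theorem \ref{th:inverse}, one obtains
$$\frac{\h_{\frac{1}{2},\frac{1}{2}}(z,\ii)}{\h_{0,0}(z,\ii)}=-\frac{B(\frac{1}{4},\frac{1}{4})}{2\sqrt{2}}\,z+O(z^{3}).$$
Matching this against the leading behavior $z\sim (2\sqrt{2}\ii/B(\frac{1}{4},\frac{1}{4}))\sqrt[4]{1-x}$ of the Schwarz map forces the choice $\sqrt[4]{1-x}=\ii\,\h_{\frac{1}{2},\frac{1}{2}}(z)/\h_{0,0}(z)$; since $\ii\cdot\ii=-1$, substituting this back into the prefactor produces the sign $-$ asserted in the corollary.

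The only real subtlety is the branch-tracking just described; once the fourth root is pinned down, the identity follows by substitution. Validity ``for any $z$ around $0$'' is automatic from analyticity, as both sides are holomorphic there: $\h_{0,0}(z)$ is nonvanishing at $0$, and $F(\frac{1}{4},\frac{1}{2},\frac{5}{4};w)$ is holomorphic in the disk $|w|<1$ while $|\h_{\frac{1}{2},\frac{1}{2}}(z)/\h_{0,0}(z)|^{4}<1$ for $z$ near $0$, so holomorphic functions agreeing to leading order on an open set must coincide.
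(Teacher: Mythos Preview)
Your argument is correct and reaches the same identity, but you resolve the branch ambiguity differently from the paper. Both proofs begin by substituting $1-x=\h_{\frac{1}{2},\frac{1}{2}}(z)^4/\h_{0,0}(z)^4$ from Corollary~\ref{cor:inv-Schwarz} into the Schwarz-map formula~(\ref{eq:S(1/4,0,1/2)}); the only ambiguity is which fourth root $\sqrt[4]{1-x}=\ii^k\,\h_{\frac{1}{2},\frac{1}{2}}(z)/\h_{0,0}(z)$ is selected. The paper pins down $k$ by passing to the boundary point $z=\ii/2$ (where $x=0$), computing $\h_{\frac{1}{2},\frac{1}{2}}(\ii/2)/\h_{0,0}(\ii/2)=-\ii$ and invoking the Gauss--Kummer value $F(\frac{1}{4},\frac{1}{2},\frac{5}{4};1)$. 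You instead pin down $k$ locally at $z=0$ by computing the first Taylor coefficient of $\h_{\frac{1}{2},\frac{1}{2}}(z)/\h_{0,0}(z)$ via Jacobi's derivative formula (Fact~\ref{fact:derivative}) and the constants of Lemma~\ref{lem:i-times}. Your route is arguably more natural for a statement asserted ``around $z=0$'', and avoids the Gauss--Kummer formula; the paper's route, on the other hand, verifies the identity at a second explicit point. One minor remark: your closing sentence (``holomorphic functions agreeing to leading order on an open set must coincide'') is not the mechanism at work and should be dropped---once $k$ is determined the identity is exact by substitution, not merely to leading order.
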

\begin{proof}
By Corollary \ref{cor:inv-Schwarz}, we have 
$$\frac{2\sqrt{2\pi}}{\G\big(\frac{1}{4}\big)^2}\cdot 
\frac{\h_{\frac{1}{2},\frac{1}{2}}(z,\ii)}{\h_{0,0}(z,\ii)}\cdot 
F\big(\frac{1}{4},\frac{1}{2},\frac{5}{4};
\frac{\h_{\frac{1}{2},\frac{1}{2}}(z,\ii)^4}{\h_{0,0}(z,\ii)^4}\big)
\equiv z
$$
modulo the monodromy group of $\CF\big(\frac{1}{4},0,\frac{1}{2}\big)$.
Since the both sides of the above become $0$ for $z=0$, 
their difference is represented as the group $\la \ii\ra$.
Consider the limit of the both sides as $z\to \dfrac{\ii}{2}$ along 
the imaginary axis.
Use
$$ 
\frac{\h_{\frac{1}{2},\frac{1}{2}}(\frac{\ii}{2},\ii)}
{\h_{0,0}(\frac{\ii}{2},\ii)}
=\ex\big(\frac{1}{2}\cdot \frac{-1}{2}\big)\cdot
\frac{\h_{0,\frac{1}{2}}(0,\ii)}
{\h_{\frac{1}{2},0}(0,\ii)}=-\ii, 
$$
and the Gauss-Kummer formula
$$
F(\a,\b,\g;1)=\frac{\G(\g)\G(\g-\a-\b)}{\G(\g-\a)\G(\g-\b)}$$
for $\re(\g-\a-\b)>0$.
\end{proof}

\subsection{$(1+\ii )$-multiplication}

\begin{theorem}
\label{th:(1+i)-multiple}
Let $z\in E_\ii$ be the image of $(t,u)\in C_\ii$ under the Abel-Jacobi map 
$\jmath_\ii$. Then we have 
\begin{equation}
\label{eq:(1+i)-mul}
\jmath_\ii^{-1}((1+\ii)z)=
\left(\left(\frac{t-2}{t}\right)^2,(1+\ii)\frac{u(2-t)}{t^2}\right).
\end{equation}
\end{theorem}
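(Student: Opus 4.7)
The plan is to work entirely through the theta-function identifications from Theorem \ref{th:inverse}, composed with the $(1+\ii)$-duplication identities in Lemma \ref{lem:(i+1)-times}. Both coordinates of $\jmath_\ii^{-1}((1+\ii)z)$ will be rewritten as rational functions of $t=t(z)$ and $u=u(z)$ this way.

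For the first coordinate I would start from $t = 1 - \h_{\frac{1}{2},\frac{1}{2}}(z)^4/\h_{0,0}(z)^4$ evaluated at $(1+\ii)z$. The first two identities of Lemma \ref{lem:(i+1)-times} give
\[
\frac{\h_{\frac{1}{2},\frac{1}{2}}((1+\ii)z)^4}{\h_{0,0}((1+\ii)z)^4}
= \ex\!\big(\tfrac{1}{2}\big)\cdot
\frac{\h_{0,0}(z)^4 \h_{\frac{1}{2},\frac{1}{2}}(z)^4}
     {\h_{0,\frac{1}{2}}(z)^4 \h_{\frac{1}{2},0}(z)^4},
\]
since the quasi-periodic exponentials and the theta constants at $0$ both cancel in the ratio. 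Using $\h_{0,\frac{1}{2}}(z)^2\h_{\frac{1}{2},0}(z)^2 = (t/2)\h_{0,0}(z)^4$ and $\h_{\frac{1}{2},\frac{1}{2}}(z)^4 = (1-t)\h_{0,0}(z)^4$ from Theorem \ref{th:inverse}, this simplifies to $-4(1-t)/t^2$, hence $t((1+\ii)z) = 1 + 4(1-t)/t^2 = ((t-2)/t)^2$.

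For the second coordinate I would form the product
\[
\frac{\h_{0,\frac{1}{2}}((1+\ii)z)\h_{\frac{1}{2},0}((1+\ii)z)\h_{\frac{1}{2},\frac{1}{2}}((1+\ii)z)}{\h_{0,0}((1+\ii)z)^3}
\]
by combining the third identity of Lemma \ref{lem:(i+1)-times} with the two used above. The exponentials $\exp(\pi\ii(1+\ii)z^2)$ again cancel, and the theta constants reduce via $\h_{0,\frac{1}{2}}(0)\h_{\frac{1}{2},0}(0)/\h_{0,0}(0)^2 = 1/\sqrt{2}$ from Lemma \ref{lem:i-times}. Substituting $\h_{0,\frac{1}{2}}(z)^2\h_{\frac{1}{2},0}(z)^2 = (t/2)\h_{0,0}(z)^4$ into the factor $\h_{0,0}(z)^4-\h_{0,\frac{1}{2}}(z)^2\h_{\frac{1}{2},0}(z)^2 = (2-t)\h_{0,0}(z)^4/2$, and solving $u = -(1-\ii)\h_{0,\frac{1}{2}}(z)\h_{\frac{1}{2},0}(z)\h_{\frac{1}{2},\frac{1}{2}}(z)/\h_{0,0}(z)^3$ for $\h_{\frac{1}{2},\frac{1}{2}}(z)$, converts everything to a rational expression in $t$ and $u$. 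After multiplying by the overall $-(1-\ii)$ the scalar phases collapse through $\sqrt{2}\,\ex(1/8) = 1+\ii$ to give $u((1+\ii)z) = (1+\ii)u(2-t)/t^2$.

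The main obstacle is careful bookkeeping of the quasi-periodic exponentials $\exp(\pi\ii(1+\ii)z^2)$ and of the phases $\ex(1/8)$ and $-(1-\ii)$, so that they collapse to precisely $1+\ii$. An independent algebraic sanity check is available: computing $((1+\ii)u(2-t)/t^2)^4 = -4(t-1)(2-t)^4/t^6$ and $((t-2)/t)^4(((t-2)/t)^2 - 1) = -4(t-1)(t-2)^4/t^6$ confirms that the asserted image satisfies $u^4 = t^2(t-1)$, and hence lies on $C_\ii$.
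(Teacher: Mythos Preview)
Your proposal is correct and follows essentially the same approach as the paper's proof: both compute $(t',u')=\jmath_\ii^{-1}((1+\ii)z)$ by substituting $(1+\ii)z$ into the theta expressions of Theorem~\ref{th:inverse}, applying the $(1+\ii)$-duplication identities of Lemma~\ref{lem:(i+1)-times}, and simplifying via Lemma~\ref{lem:i-times}. The only cosmetic difference is that for the $t$-coordinate you use the expression $t=1-\h_{\frac12,\frac12}(z)^4/\h_{0,0}(z)^4$, whereas the paper uses the equivalent $t=2\h_{0,\frac12}(z)^2\h_{\frac12,0}(z)^2/\h_{0,0}(z)^4$; the computations for $u'$ are the same, and your closing algebraic check that the image lies on $C_\ii$ is a nice addition.
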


\begin{proof}
We set 
$$(t',u')=\jmath_\ii^{-1}((1+\ii)z).$$ 
By Theorem \ref{th:inverse}, we have
\begin{align*}
t'&=2\frac{\h_{0,\frac{1}{2}}((1+\ii)z)^2\h_{\frac{1}{2},0}((1+\ii)z)^2}
{\h_{0,0}((1+\ii)z)^4},\\
u'&=-(1-\ii)
\frac{\h_{0,\frac{1}{2}}((1+\ii)z)\h_{\frac{1}{2},0}((1+\ii)z)\h_{\frac{1}{2},\frac{1}{2}}((1+\ii)z)}
{\h_{0,0}((1+\ii)z)^3}.
\end{align*}
We transform them as 
\begin{align*}
t'&=2\frac{\h_{0,\frac{1}{2}}^4(0)\h_{\frac{1}{2},0}^4(0)}{\h_{0,0}^4(0)
\h_{0,\frac{1}{2}}(z)^4\h_{\frac{1}{2},0}(z)^4}
\cdot
\frac{(\h_{0,0}(z)^4-\h_{0,\frac{1}{2}}(z)^2\h_{\frac{1}{2},0}(z)^2)^2}
{\h_{0,\frac{1}{2}}(0)^2\h_{\frac{1}{2},0}(0)^2}\\
&= 2\frac{\h_{0,\frac{1}{2}}(0)^2\h_{\frac{1}{2},0}(0)^2}
{\h_{0,0}(0)^4}\cdot
\frac{(\h_{0,0}(z)^4-\h_{0,\frac{1}{2}}(z)^2\h_{\frac{1}{2},0}(z)^2)^2}
{\h_{0,\frac{1}{2}}(z)^4\h_{\frac{1}{2},0}(z)^4}=\left(\frac{2}{t}-1\right)^2,
\end{align*}
\begin{align*}
u'&=-(1-\ii)\cdot
\frac{\ex\big(\frac{1}{8}\big)
\h_{0,0}(0)\h_{0,0}(z)\h_{\frac{1}{2},\frac{1}{2}}(z)}
{\h_{0,\frac{1}{2}}(0)\h_{\frac{1}{2},0}(0)}
\cdot 
\frac{\h_{0,0}(z)^4-\h_{0,\frac{1}{2}}(z)^2\h_{\frac{1}{2},0}(z)^2}
{\h_{0,\frac{1}{2}}(0)\h_{\frac{1}{2},0}(0)}\\
& \cdot 
\frac{\h_{0,\frac{1}{2}}(0)^3\h_{\frac{1}{2},0}(0)^3}{\h_{0,0}(0)^3}\frac{1}
{\h_{0,\frac{1}{2}}(z)^3\h_{\frac{1}{2},0}(z)^3}\\
&=-\sqrt{2}\cdot \frac{\h_{0,\frac{1}{2}}(0)\h_{\frac{1}{2},0}(0)}
{\h_{0,0}(0)^2}\cdot 
\frac{\h_{0,0}(z)\h_{\frac{1}{2},\frac{1}{2}}(z)(\h_{0,0}(z)^4-\h_{0,\frac{1}{2}}(z)^2\h_{\frac{1}{2},0}(z)^2)}
{\h_{0,\frac{1}{2}}(z)^3\h_{\frac{1}{2},0}(z)^3}\\
&=-\frac{\h_{0,0}(z)^8\h_{\frac{1}{2},\frac{1}{2}}(z)\h_{0,\frac{1}{2}}(z)\h_{\frac{1}{2},0}(z)  }
{\h_{0,\frac{1}{2}}(z)^4\h_{\frac{1}{2},0}(z)^4\h_{0,0}(z)^3}+
\frac{\h_{0,0}(z)^4\h_{\frac{1}{2},\frac{1}{2}}(z)\h_{0,\frac{1}{2}}(z)\h_{\frac{1}{2},0}(z)}
{\h_{0,\frac{1}{2}}(z)^2\h_{\frac{1}{2},0}(z)^2\h_{0,0}(z)^3}
\\
&=\frac{4}{(1-\ii)}\frac{u}{t^2}-\frac{2}{(1-\ii)}\frac{u}{t}
=(1+\ii)\frac{u(2-t)}{t^2},
\end{align*}
by Lemma \ref{lem:(i+1)-times} and Theorem \ref{th:inverse}. 
\end{proof}

\section{The Schwarz map for $(\a,\b,\g)=(1/3,0,1/2)$}
In this section, we study the Schwarz map for $(\a,\b,\g)=(1/3,0,1/2)$ and 
its inverse by using an elliptic curve with $\z$-action and $\h_{a,b}(z,\z)$,
where $\z=\dfrac{1+\sqrt{3}\ii}{2}$.
\subsection{The Abel-Jacobi map for $C_\z$}
Let $C_\z$ be an algebraic curve in $\P^2$ defined by 
$$
C_\z: s_2^6=s_0^2s_1^3(s_1-s_0).
$$
By affine coordinates $(t,u)=(s_1/s_0,s_2/s_0)$, $C_\z$  is expressed as
$$u^6=t^3(t-1).$$
Note that $(t,u)=(0,0)$ is a triple node and 
$[s_0,s_1,s_2]=[0,1,0]$ is a node. We use the same symbol $C_\z$ for 
a non-singular model of $C_\z$.
We regard $C_\z$ as a cyclic $6$-fold covering of the $t$-space 
with covering transformation 
$$\rho_\z:(t,u)\mapsto (t,\z u),\quad \z=\frac{1+\sqrt{-3}}{2}.$$
The branching information of this covering is as in Table \ref{tab:6-covering}.
\begin{table}[htb]
\begin{center}
\begin{tabular}[htb]{|c|cccccc|}
\hline
ramification point& 
$P_{0,1}$&$P_{0,2}$ & $P_{0,3}$ & $P_1=(1,0)$ & 
$P_{\infty,1}$ &$P_{\infty,2}$\\
projected point&
$0$ &$0$ &$0$ &$1$ &$\infty$ &$\infty$ \\
ramification index& 
$2$ &$2$ &$2$ &$6$ &$3$ &$3$ \\
\hline
\end{tabular}
\end{center} 
\caption{Branching information} 
\label{tab:6-covering}
\end{table}
Here we set some points in the non-singular model $C_\z$ as follows:
$$
P_{0,1}=\lim_{\substack{t\to 0\\t\in (0,1)}}
(t,t^{1/2}(t-1)^{1/6}),\quad P_{0,2}=\rho_\z(P_{0,1}),\quad 
P_{0,3}=\rho_\z^2(P_{0,1}),
$$
$$
P_{\infty,1}=\lim_{\substack{t\to \infty\\t\in (1,\infty)}} 
(t,t^{1/2}(t-1)^{1/6}),\quad P_{\infty,2}=\rho_\z(P_{\infty,1}),$$
where $\arg(t)=\arg(t-1)=0$ on the open interval $I_\infty=(1,\infty)$ and 
$\arg(t)=0$, $\arg(t-1)=\pi$ on the open interval $I_0=(0,1)$.
By the Hurwitz formula, $C_\z$ is an elliptic curve.

We can regard $t$ and $u$ as meromorphic functions on $C_\z$. We give some 
meromorphic functions on $C_\z$ and their zero and pole divisors 
as in Table \ref{tab:divisor}.
\begin{table}[htb]
  \centering
  \begin{tabular}[htb]{c|c|c}
functions & zero divisor & pole divisor \\[2mm]
\hline
$t$ & $2P_{0,1}+2P_{0,2}+2P_{0,3}$& $3P_{\infty,1}+3P_{\infty,2}$\\[2mm]
$t-1$ & $6P_{1}$& $3P_{\infty,1}+3P_{\infty,2}$\\[2mm]
$1+\dfrac{1}{t-1}$ &$2P_{0,1}+2P_{0,2}+2P_{0,3}$&  $6P_{1}$\\[2mm]
$u$ & $P_{0,1}+P_{0,2}+P_{0,3}+P_1$ &$2P_{\infty,1}+2P_{\infty,2}$\\[2mm]
$\dfrac{u^2}{t}(=\sqrt[3]{t-1})$ & $2P_1$ &$P_{\infty,1}+P_{\infty,2}$\\[2mm]
$\dfrac{u^3}{t}\Big(=\sqrt{t(t-1)}\Big)$ 
& $P_{0,1}+P_{0,2}+P_{0,3}+3P_{1}$&$3P_{\infty,1}+3P_{\infty,2}$\\[2mm]
$\dfrac{u^3}{t(t-1)}\Big(=\sqrt{\dfrac{t}{t-1}}\Big)$ 
& $P_{0,1}+P_{0,2}+P_{0,3}$&$3P_{1}$\\[2mm]
$1+\dfrac{\z^4 t}{u^2}\Big(=1+\dfrac{\z^4}{\sqrt[3]{t-1}}\Big)$ 
& $2P_{0,1}$&$2P_{1}$\\[2mm]
$1+\dfrac{t}{u^2}\Big(=1+\dfrac{1}{\sqrt[3]{t-1}}\Big)$ 
& $2P_{0,2}$&$2P_{1}$\\[2mm]
$1+\dfrac{\z^2t}{u^2}\Big(=1+\dfrac{\z^2}{\sqrt[3]{t-1}}\Big)$ 
& $2P_{0,3}$&$2P_{1}$\\[2mm]
  \end{tabular}
  \caption{Meromorphic functions on $C_\z$}
  \label{tab:divisor}
\end{table}
Pay your attention to the last three meromorphic functions 
for the setting of branch of $u$.
Note that 
$$\left(1+\frac{t}{u^2}\right)\left(1+\frac{\z^2t}{u^2}\right)
\left(1+\frac{\z^4t}{u^2}\right)=1+\frac{1}{t-1}.$$
The preimage of $I_\infty$ under the natural projection consists of 
six copies $\rho_\z^i\cdot I_{\infty}$ $(i=0,1,\dots,5)$.  
Since the terminal points of $\rho_\z^2\cdot I_\infty$ 
coincide with that of $I_\infty$, 
the formal difference $B=\rho_\z^0\cdot I_\infty -\rho_\z^2\cdot I_\infty
=(1-\rho_\z^2)\cdot I_{\infty}$ is a cycle of $C_\z$. 
Let $A$ be the cycle $\rho_\z\cdot B$. Then the 
intersection number $B\cdot A$ of the cycles $B$ and $A$ is $1$. 
Thus the cycles $A$ and $B$ form a basis of the first homology group 
$H_1(C_\z,\Z)$ of $C_\z$.

A non-zero holomorphic $1$-form $\psi$ on $C_\z$ is given by 
$$\psi=\frac{t^2dt}{u^5}=\frac{udt}{t(t-1)}
=\frac{t^{1/2}(t-1)^{1/6}dt}{t(t-1)}.
$$
It is easy to see that 
$$\rho_\z^*(\psi)=\z \psi.$$
Note that 
$$\int_{I_\infty}\psi =\int_1^\infty t^{1/2-1}(t-1)^{1/6-1}dt 
=\int_0^1 s^{1/3-1}(1-s)^{1/6-1}ds
=B\Big(\frac{1}{3},\frac{1}{6}\Big),
$$ 
$$
\int_{A}\psi =\z(1-\z^2)B\Big(\frac{1}{3},\frac{1}{6}\Big),
\quad \int_{B}\psi =(1-\z^2)B\Big(\frac{1}{3},\frac{1}{6}\Big).
$$
We normalize $\psi$ to $\psi_1$ as
$$\psi_1=\frac{1}{(1-\z^2)B\Big(\frac{1}{3},\frac{1}{6}\Big)}\psi,$$
then we have 
$$\int_A\psi_1=\z,\quad \int_B\psi_1=1.$$
The Abel-Jacobi map is defined by 
$$
\jmath_\z:C_\z\ni P\mapsto  \int_{P_1}^P \psi_1 \in E_\z=\C/L_\z,
$$
where $L_\z=\Z\z+\Z\subset \C$. 
The map $\jmath_\z$ is an isomorphism between $C_\z$ and $E_\z$. 

\begin{proposition}
\label{prop:jmath-image}
We have 
$$\jmath_\z(P_1)=0,\quad\jmath_\z(P_{\infty,1})=\frac{\z+1}{3},\quad
\jmath_\z(P_{\infty,2})=\frac{2\z+2}{3},$$
$$
\jmath_\z(P_{0,1})=\frac{\z}{2},\quad
\jmath_\z(P_{0,2})=\frac{\z+1}{2},\quad
\jmath_\z(P_{0,3})=\frac{1}{2}$$
as elements of $E_\z$.
\end{proposition}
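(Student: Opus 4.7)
The plan is to reduce all six evaluations to one integral along $I_\infty$, one direct integral along the lift of $(0,1)$, and the $\z$-equivariance of $\jmath_\z$. The identity $\jmath_\z(P_1)=0$ is immediate from the definition. For $P_{\infty,1}$ the oriented path $I_\infty$ already runs from $P_1$ to $P_{\infty,1}$, and the calculations preceding the proposition give
$$\jmath_\z(P_{\infty,1})=\int_{I_\infty}\psi_1=\frac{B(1/3,1/6)}{(1-\z^2)\,B(1/3,1/6)}=\frac{1}{1-\z^2}=\frac{\z+1}{3},$$
the last equality following from $(1+\z)(1-\z^2)=(1+\z)^2(1-\z)=3\z(1-\z)=3$ via $\z^2=\z-1$.

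Next I plan to show that $\rho_\z$ induces multiplication by $\z$ on $E_\z$. Because $\rho_\z^*\psi_1=\z\psi_1$ and $\rho_\z(P_1)=P_1$, pulling back any path $\gamma$ from $P_1$ to $P$ by $\rho_\z$ yields
$$\jmath_\z(\rho_\z P)=\int_{\rho_\z\gamma}\psi_1=\int_\gamma \rho_\z^*\psi_1=\z\,\jmath_\z(P).$$
Applied to $P_{\infty,1}$, this gives
$$\jmath_\z(P_{\infty,2})=\z\cdot\frac{\z+1}{3}=\frac{\z^2+\z}{3}\equiv\frac{2\z+2}{3}\pmod{L_\z},$$
the congruence using $\z^2+\z-(2\z+2)=-3\in L_\z$. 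Likewise, once $\jmath_\z(P_{0,1})=\z/2$ is in hand, the remaining two images become $\jmath_\z(P_{0,2})=\z^2/2\equiv(\z+1)/2$ and $\jmath_\z(P_{0,3})=\z^3/2\equiv 1/2$ modulo $L_\z$, using $\z^2=\z-1$ and $\z^3=-1$.

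It then remains to compute $\jmath_\z(P_{0,1})$ by direct integration along the lift of $(0,1)$ with the branches $\arg(t)=0$ and $\arg(t-1)=\pi$ fixed by the definition of $P_{0,1}$. On this lift $t^{-1/2}(t-1)^{-5/6}=e^{-5\pi\ii/6}\,t^{-1/2}(1-t)^{-5/6}$, so integrating from $t=1$ to $t=0$ gives
$$\int_{P_1}^{P_{0,1}}\psi=-e^{-5\pi\ii/6}\int_0^1 t^{-1/2}(1-t)^{-5/6}\,dt=-e^{-5\pi\ii/6}\,B\!\left(\tfrac12,\tfrac16\right).$$
The ratio $B(1/2,1/6)/B(1/3,1/6)=\Gamma(1/2)^2/(\Gamma(1/3)\Gamma(2/3))$ simplifies to $\sqrt3/2$ via $\Gamma(1/2)^2=\pi$ and the reflection formula $\Gamma(1/3)\Gamma(2/3)=\pi/\sin(\pi/3)=2\pi/\sqrt3$; writing $1-\z^2=\sqrt3\,e^{-\pi\ii/6}$ in polar form and combining with $-e^{-5\pi\ii/6}=e^{\pi\ii/6}$ yields
$$\jmath_\z(P_{0,1})=\frac{e^{\pi\ii/6}\cdot\sqrt3/2}{\sqrt3\,e^{-\pi\ii/6}}=\tfrac12\,e^{\pi\ii/3}=\frac{\z}{2}.$$

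The only genuinely delicate step is the branch bookkeeping in this final integral: with a different lift of $(0,1)$ above $P_{0,1}$ the phase $e^{-5\pi\ii/6}$ shifts by a sixth root of unity, and the answer lands on some $\z^k/2$ rather than the specific value $\z/2$ attached to the branch chosen in the definition of $P_{0,1}$. Everything else is a routine mixture of beta function manipulations and the $\rho_\z$-equivariance of $\jmath_\z$.
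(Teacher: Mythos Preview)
Your argument is correct and follows essentially the same route as the paper: the value at $P_{\infty,1}$ from $\int_{I_\infty}\psi_1$, the $\rho_\z$-equivariance $\jmath_\z\circ\rho_\z=\z\cdot\jmath_\z$ to propagate to $P_{\infty,2}$, $P_{0,2}$, $P_{0,3}$, and the direct beta-integral evaluation of $\jmath_\z(P_{0,1})$ along the chosen lift of $(0,1)$. Your phase $-e^{-5\pi\ii/6}=e^{\pi\ii/6}=\ex(1/12)$ matches the paper's, and the simplification via $\Gamma(1/2)^2/(\Gamma(1/3)\Gamma(2/3))=\sqrt{3}/2$ is the same reflection-formula step.
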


\begin{proof}
It is obvious that $\jmath_\z(P_1)=0$. It is easy to see that 
\begin{align*}
\jmath_\z(P_{\infty,1})
&=\int_{I_\infty}\psi_1=\frac{1}{1-\z^2}=\frac{\z+1}{3},\\
\jmath_\z(P_{\infty,2})&=\int_{\rho_\z\cdot I_\infty} \psi_1=
\int_{I_\infty} \rho_\z^*(\psi_1)
=\z\jmath_\z(P_{\infty,1})
=\frac{\z^2+\z}{3}
\equiv \frac{2\z+2}{3}\bmod L_\z.
\end{align*}
Note that 
$$
\jmath_\z(P_{0,1})=\int_{I_0}\psi_1
=\frac{1}{(1-\z^2)B\big(\frac{1}{3},\frac{1}{6}\big)}
\int_1^0 t^{1/2}(t-1)^{1/6}\frac{dt}{t(t-1)},
$$
$$
\int_1^0 t^{1/2}(t-1)^{1/6}\frac{dt}{t(t-1)}
=\ex\big(\frac{1}{12}\big)\int_0^1 t^{1/2}(1-t)^{1/6}\frac{dt}{t(1-t)}
=\ex\big(\frac{1}{12}\big) B\big(\frac{1}{2},\frac{1}{6}\big).
$$
Thus we have 
$$\jmath_\z(P_{0,1})=\frac{\ex(\frac{1}{12})}{1\!-\!\z^2}\cdot 
\frac{B\big(\frac{1}{2},\frac{1}{6}\big)}{B\big(\frac{1}{3},\frac{1}{6}\big)}
=\frac{(\z\!+\!1)\ex\big(\frac{1}{12}\big)}{3} \cdot
\frac{\G\big(\frac{1}{2}\big)\G\big(\frac{1}{2}\big)}
{\G\big(\frac{2}{3}\big)\G\big(\frac{1}{3}\big)}
=\frac{\sqrt{3}\ex\big(\frac{1}{6}\big)}{3}\cdot\frac{\sqrt{3}}{2}
=\frac{\z}{2}.$$
The rests are obtained as
$$\jmath_\z(P_{0,2})=\z\jmath_\z(P_{0,1})\equiv \frac{\z+1}{2}
\bmod L_\z, \quad 
\jmath_\z(P_{0,3})=\z^2\jmath_\z(P_{0,1})\equiv \frac{1}{2}\bmod L_\z,
$$
since 
$P_{0,2}=\rho_\z\cdot P_{0,1}$ and $P_{0,3}=\rho_\z^2\cdot P_{0,1}$.
\end{proof}

We consider the relation between the Abel-Jacobi map $\jmath_\z$ and 
the Schwarz map 
\begin{equation}
\label{eq:S(1/3,0,1/2)}
x\mapsto \frac{f_1(x)}{(1-\z^2)f_2(x)}=
\frac{2\sqrt{3}\z}
{B(\frac{1}{3},\frac{1}{6})}
\sqrt[6]{1-x}
F\big(\frac{1}{6},\frac{1}{2},\frac{7}{6};1-x)
\end{equation}
for $\CF\big(\frac{1}{3},0,\frac{1}{2}\big)$.
By Corollary \ref{cor:monod}, its monodromy group is generated by 
the three transformations
$$N_0:z\mapsto -z+\z,\quad N_1:z\mapsto \z z,\quad (N_0N_1)^{-1}:
z\mapsto \z^2z+1,$$
and this group is isomorphic to
the semi-direct product $\la \z\ra \ltimes \Z[\z]$. 
Note that the information of a branch of $u=\sqrt[6]{x^3(x-1)}$ is 
lost in the Schwarz map.  
Thus we can regard the Schwarz map as the Abel-Jacobi map $\jmath_\z$ 
modulo the actions of $\rho_\z$ and $\z$; that is 
$$C_\z/\la \rho_\z\ra \ni x\mapsto \int_1^x \psi_1 \in E_\z/\la \z\ra.$$

\subsection{The inverse of $\jmath_\z$}
We express the inverse of the Abel-Jacobi map $\jmath_\z$.
We regard the coordinates $t$ and $u$ as meromorphic functions on $C_\z$. 
The pull-backs ${\jmath_\z^{-1}}^*(t)$ and ${\jmath_\z^{-1}}^*(u)$ are 
elliptic functions with respect to the lattice $L_\z$, 
they can be expressed as 
$${\jmath_\z^{-1}}^*(t)=\theta_t(z),\quad {\jmath_\z^{-1}}^*(u)=\theta_u(z)$$
in terms of theta functions with characteristics. 
It turns out that the map 
$$E_\z\ni z \mapsto (\theta_t(z),\theta_u(z))\in C_\z$$
is  the inverse of $\jmath_\z$.

\begin{lemma}
\label{lem:2-ratio}
Let $z$ be the image of $(t,u)\in C_\z$ under the Abel-Jacobi map. 
Then we have 
$$
1+\frac{t}{u^2}= 
\sqrt{3}\ii\frac{\h_{0,0}(z,\z)^2}{\h_{\frac{1}{2},\frac{1}{2}}(z,\z)^2},\quad
1+\frac{\z^2 t}{u^2}=-\sqrt{3}  
\frac{\h_{\frac{1}{2},0}(z,\z)^2}{\h_{\frac{1}{2},\frac{1}{2}}(z,\z)^2},\quad
1+\frac{\z^4 t}{u^2}=\sqrt{3} 
\frac{\h_{0,\frac{1}{2}}(z,\z)^2}{\h_{\frac{1}{2},\frac{1}{2}}(z,\z)^2}.
$$
\end{lemma}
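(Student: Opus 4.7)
The plan is to mirror the argument of the corollary following Theorem \ref{th:inverse}: match divisors on $E_\z$ to identify each side up to a scalar, then pin down the three scalars $c_0,c_1,c_2$ by a single direct evaluation together with the covering symmetry of $C_\z$.

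First I would verify each side has the required divisor. From Table \ref{tab:divisor}, the functions $1+t/u^2$, $1+\z^2 t/u^2$, $1+\z^4 t/u^2$ on $C_\z$ have divisors $2P_{0,2}-2P_1$, $2P_{0,3}-2P_1$, $2P_{0,1}-2P_1$ respectively. By Proposition \ref{prop:jmath-image} these pull back under $\jmath_\z^{-1}$ to elliptic functions on $E_\z$ with double zeros at $(\z+1)/2$, $1/2$, $\z/2$ and double poles at $0$. The three proposed theta ratios each satisfy the sum-of-characteristics congruence mod $\Z^2$ (hence define elliptic functions for $L_\z$), and by the zero rule $\h_{a,b}(z,\z)=0\iff z\equiv(-a+\tfrac12)\z+(-b+\tfrac12)\bmod L_\z$ they have exactly the matching divisors. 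By Liouville each identity then holds up to a multiplicative constant $c_k$.

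To pin down $c_0$ I would evaluate the first identity at $z=(\z+1)/3=\jmath_\z(P_{\infty,1})$. There $u^2/t=(t-1)^{1/3}\to+\infty$, so $1+t/u^2\to 1$. From the basic transformations in Section 3.1 one derives
$$\h_{a,b}(z+c\tau+d,\tau)=\ex\!\bigl(-\tfrac{c^2\tau}{2}-c(z+b+d)\bigr)\,\h_{a+c,b+d}(z,\tau),$$
and taking $c=d=\tfrac{1}{3}$ expresses $\h_{0,0}((\z+1)/3,\z)$ and $\h_{\frac{1}{2},\frac{1}{2}}((\z+1)/3,\z)$ as explicit phases times $\h_{\frac{1}{3},\frac{1}{3}}(0,\z)$ and $\h_{\frac{5}{6},\frac{5}{6}}(0,\z)$ respectively. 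Using $\h_{a+p,b+q}=\ex(aq)\h_{a,b}$ together with $\h_{-a,-b}(0)=\h_{a,b}(0)$ one reduces $\h_{\frac{5}{6},\frac{5}{6}}(0,\z)=\ex(-\tfrac{1}{6})\,\h_{\frac{1}{6},\frac{1}{6}}(0,\z)$. Substituting the evaluations of $\h_{\frac{1}{3},\frac{1}{3}}(0,\z)$ and $\h_{\frac{1}{6},\frac{1}{6}}(0,\z)$ from Lemma \ref{lem:hi-theta} collapses the theta ratio to $-i/\sqrt{3}$, so $c_0=\sqrt{3}\,i$.

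For $c_1$ and $c_2$ I would invoke the covering symmetry $\rho_\z:(t,u)\mapsto(t,\z u)$. Since $\rho_\z$ fixes $P_1$ and $\rho_\z^*\psi_1=\z\psi_1$, it corresponds to $z\mapsto\z z$ on $E_\z$, so $\rho_\z^{2*}$ corresponds to $z\mapsto\w z$. On $C_\z$ one has $\rho_\z^{2*}(1+t/u^2)=1+\z^2 t/u^2$ and $\rho_\z^{2*}(1+\z^2 t/u^2)=1+\z^4 t/u^2$. Applying $\rho_\z^{2*}$ to the first identity and using Lemma \ref{lem:omega-times} to write $\h_{0,0}(\w z)^2/\h_{\frac{1}{2},\frac{1}{2}}(\w z)^2=(\ex(-1/12)/\w^2)\cdot\h_{\frac{1}{2},0}(z)^2/\h_{\frac{1}{2},\frac{1}{2}}(z)^2$ with $\ex(-1/12)/\w^2=i$ yields $c_1=ic_0=-\sqrt{3}$. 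Applying $\rho_\z^{2*}$ once more and computing $\h_{\frac{1}{2},0}(\w z)^2/\h_{\frac{1}{2},\frac{1}{2}}(\w z)^2=(\ex(1/6)/\w^2)\cdot\h_{0,\frac{1}{2}}(z)^2/\h_{\frac{1}{2},\frac{1}{2}}(z)^2$ with $\ex(1/6)/\w^2=-1$ gives $c_2=-c_1=\sqrt{3}$. The main obstacle is the evaluation step for $c_0$, where one must simultaneously track phases from the quasi-periodicity, the characteristic-shift formula, and Lemma \ref{lem:hi-theta}; once $c_0$ is fixed, the symmetry derivation of $c_1$ and $c_2$ through Lemma \ref{lem:omega-times} is routine bookkeeping.
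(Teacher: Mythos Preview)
Your argument is correct. The paper follows the same divisor-matching setup but pins down the constant differently: it evaluates the first identity at $P_{0,1}$ (that is, $z=\z/2$), where $t/u^2\to-\w$ so the left side equals $1-\w$, and on the right the half-period shift gives $\h_{0,0}(\z/2)^2/\h_{\frac12,\frac12}(\z/2)^2=-\h_{\frac12,0}(0)^2/\h_{0,\frac12}(0)^2=-\ex(\tfrac16)$, which needs only the elementary first line of Lemma~\ref{lem:hi-theta}. For the remaining two constants the paper simply says ``the rests can be shown similarly''. Your evaluation at $P_{\infty,1}$ is heavier, since it leans on the deeper evaluations of $\h_{\frac13,\frac13}(0,\z)$ and $\h_{\frac16,\frac16}(0,\z)$ in Lemma~\ref{lem:hi-theta}; on the other hand, your derivation of $c_1$ and $c_2$ from $c_0$ via the covering symmetry $\rho_\z^2\leftrightarrow(z\mapsto\w z)$ and Lemma~\ref{lem:omega-times} is more systematic than three separate evaluations and makes the cyclic relation among the three identities explicit.
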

\begin{proof}
By Table \ref{tab:divisor}, we have 
$$1+\frac{t}{u^2}=c\cdot 
\frac{\h_{0,0}(z)^2}{\h_{\frac{1}{2},\frac{1}{2}}(z)^2},
$$
where $c$ is a constant. We substitute $P_{0,1}$ into the above, 
we have 
$$
1-\w=c\cdot \frac{\h_{0,0}(\z/2)^2}{\h_{\frac{1}{2},\frac{1}{2}}(\z/2)^2}
=c\cdot\Big(-\frac{\h_{\frac{1}{2},0}(0)^2}{\h_{0,\frac{1}{2}}(0)^2}\Big)
=-c\cdot \ex\big(\frac{1}{6}\big),
$$
which yields $c=\sqrt{3}\ii$.
The rests can be shown similarly.
%
%
\end{proof}

\begin{lemma}
\label{lem:0011-exp}
The functions $\h_{0,\frac{1}{2}}(z,\z)^2$ and $\h_{\frac{1}{2},0}(z,\z)^2$ 
are expressed as linear combinations of $\h_{0,0}(z,\z)^2$ and 
$\h_{\frac{1}{2},\frac{1}{2}}(z,\z)^2$:
\begin{align*}
\h_{0,\frac{1}{2}}(z,\z)^2&= \ex\big(\frac{-1}{12}\big)\left(
\h_{0,0}(z,\z)^2-\w^2\h_{\frac{1}{2},\frac{1}{2}}(z,\z)^2\right),\\
\h_{\frac{1}{2},0}(z,\z)^2&= \ex\big(\frac{1}{12}\big)\left(
\h_{0,0}(z,\z)^2+\w\h_{\frac{1}{2},\frac{1}{2}}(z,\z)^2\right).
\end{align*}
\end{lemma}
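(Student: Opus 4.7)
The plan is to solve a $2\times 2$ linear system in which $\h_{0,\frac{1}{2}}(z,\z)^2$ and $\h_{\frac{1}{2},0}(z,\z)^2$ are the unknowns and $\h_{0,0}(z,\z)^2,\h_{\frac{1}{2},\frac{1}{2}}(z,\z)^2$ enter as data. Two independent linear relations will suffice.

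First, I will extract one relation from Lemma \ref{lem:2-ratio}. Since $\w=\z^2$ is a primitive cube root of unity, $1+\z^2+\z^4=1+\w+\w^2=0$, so summing the three identities of that lemma kills the $t/u^2$ terms on the left, leaving the constant $3$. Clearing the common denominator $\h_{\frac{1}{2},\frac{1}{2}}(z,\z)^2$ and dividing by $\sqrt{3}$ produces
$$\ii\,\h_{0,0}(z,\z)^2 - \h_{\frac{1}{2},0}(z,\z)^2 + \h_{0,\frac{1}{2}}(z,\z)^2 = \sqrt{3}\,\h_{\frac{1}{2},\frac{1}{2}}(z,\z)^2.$$

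Second, I will obtain a complementary relation by specializing the first identity chain of Fact \ref{fact:addition} at $z_1=z$, $z_2=0$. Since the characteristic $(\tfrac12,\tfrac12)$ makes $\h_{\frac{1}{2},\frac{1}{2}}(\cdot,\z)$ odd (so $\h_{\frac{1}{2},\frac{1}{2}}(0,\z)=0$), the middle term collapses and the outer equality reduces to
$$\h_{0,0}(z,\z)^2\,\h_{0,0}(0,\z)^2 = \h_{0,\frac{1}{2}}(z,\z)^2\,\h_{0,\frac{1}{2}}(0,\z)^2 + \h_{\frac{1}{2},0}(z,\z)^2\,\h_{\frac{1}{2},0}(0,\z)^2.$$
Dividing by $\h_{0,0}(0,\z)^2$ and inserting the theta-constant ratios $\h_{0,\frac{1}{2}}(0,\z)^2=\ex(-1/12)\h_{0,0}(0,\z)^2$ and $\h_{\frac{1}{2},0}(0,\z)^2=\ex(1/12)\h_{0,0}(0,\z)^2$ from Lemma \ref{lem:hi-theta} gives
$$\h_{0,0}(z,\z)^2 = \ex(-1/12)\,\h_{0,\frac{1}{2}}(z,\z)^2 + \ex(1/12)\,\h_{\frac{1}{2},0}(z,\z)^2.$$

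Finally, I will solve this pair of linear equations by standard elimination. The key phase identities $1+\ii\,\ex(-1/12)=\sqrt{3}\,\ex(1/12)$, $\ex(1/12)-\ii=\ex(-1/12)$, $\w\,\ex(1/12)=-\ex(-1/12)$, and $\w^2\,\ex(-1/12)=-\ex(1/12)$ recast the unique solution in exactly the compact form stated in the lemma. I do not anticipate a conceptual obstacle; the only delicate point is careful bookkeeping of the various $\ex(k/12)$ phase factors so that the answer matches the prescribed right-hand sides rather than a phase-shifted variant.
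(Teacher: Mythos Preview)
Your argument is correct: the two linear relations you derive are independent and do solve to the stated formulas, and there is no circularity since the first line of Lemma~\ref{lem:hi-theta} depends only on Lemma~\ref{lem:omega-times}.

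That said, your route is more roundabout than the paper's. The paper stays entirely within Lemma~\ref{lem:2-ratio}: from the first identity one has $t/u^2=\sqrt{3}\ii\,\h_{0,0}^2/\h_{\frac12,\frac12}^2-1$, and substituting this directly into the second and third identities (i.e.\ into $\w\,t/u^2$ and $\w^2\,t/u^2$) already produces the two desired expressions, one line each. You instead sum all three identities of Lemma~\ref{lem:2-ratio} to get a single relation, and then bring in Fact~\ref{fact:addition} together with the theta-constant ratios of Lemma~\ref{lem:hi-theta} to manufacture a second independent relation, after which you solve a $2\times2$ system. This works, but it imports more machinery and more phase bookkeeping than necessary; the paper's pairwise elimination of $t/u^2$ is both shorter and avoids any appeal to the addition formula or to theta constants.
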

\begin{proof}
By Lemma \ref{lem:2-ratio}, we have 
$$\begin{array}{rlllc}
-\sqrt{3}\dfrac{\h_{\frac{1}{2},0}(z)^2}{\h_{\frac{1}{2},\frac{1}{2}}(z)^2}-1
&=\w\dfrac{t}{u^2}
&=
\w\left(\sqrt{3}\ii\dfrac{\h_{0,0}(z)^2}
{\h_{\frac{1}{2},\frac{1}{2}}(z)^2}-1\right),\\
\sqrt{3}\dfrac{\h_{0,\frac{1}{2}}(z)^2}
{\h_{\frac{1}{2},\frac{1}{2}}(z)^2}-1
&=\w^2\dfrac{t}{u^2}
&=
\w^2\left(\sqrt{3}\ii\dfrac{\h_{0,0}(z)^2}
{\h_{\frac{1}{2},\frac{1}{2}}(z)^2}-1\right),
\end{array}
$$
which yield this lemma.
\end{proof}

\begin{lemma}
\label{lem:u3/(t(t-1))}
Let $z$ be the image of $(t,u)\in C_\z$ under the Abel-Jacobi map. 
Then we have
$$
\frac{u^3}{t(t-1)}=
\ex\big(\frac{-1}{8}\big)\sqrt[4]{27}
\frac{\h_{0,0}(z,\z)\h_{0,\frac{1}{2}}(z,\z)
\h_{\frac{1}{2},0}(z,\z)}{\h_{\frac{1}{2},\frac{1}{2}}(z,\z)^3}.
$$
\end{lemma}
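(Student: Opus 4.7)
The plan is divisor matching followed by squaring, with a sign check at the end. By Table \ref{tab:divisor}, the meromorphic function $u^3/(t(t-1))=\sqrt{t/(t-1)}$ on $C_\z$ has divisor $P_{0,1}+P_{0,2}+P_{0,3}-3P_1$, which under $\jmath_\z$ (Proposition \ref{prop:jmath-image}) corresponds to simple zeros at $z=\z/2,\,(\z+1)/2,\,1/2$ and a triple pole at $z=0$ on $E_\z$. Since $\h_{a,b}(z,\z)$ has its unique simple zero in a fundamental domain at $(\tfrac12-a)\z+(\tfrac12-b)$, the quotient
$$R(z):=\frac{\h_{0,0}(z,\z)\,\h_{0,\frac12}(z,\z)\,\h_{\frac12,0}(z,\z)}{\h_{\frac12,\frac12}(z,\z)^3}$$
has exactly this divisor, and $(0,0)+(0,\tfrac12)+(\tfrac12,0)\equiv 3(\tfrac12,\tfrac12)\pmod{\Z^2}$ ensures $R$ is elliptic for $L_\z$. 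Hence $u^3/(t(t-1))=c\,R(z)$ for some constant $c$.

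To pin down $c^2$, I would use $(t/u^2)^3=1/(t-1)$, which gives the algebraic identity
$$\frac{t}{t-1}=1+\frac{1}{t-1}=\prod_{k=0}^{2}\Big(1+\frac{\z^{2k}t}{u^2}\Big).$$
Replacing each factor by its theta-function expression from Lemma \ref{lem:2-ratio}, the three numerical prefactors multiply to $(\sqrt 3\ii)(-\sqrt 3)(\sqrt 3)=-3\sqrt 3\ii$, while the theta parts reassemble to $R(z)^2$. Comparing with $c^2R(z)^2$ yields $c^2=-3\sqrt 3\ii=3\sqrt 3\,\ex(-\tfrac14)$, so $c=\pm\sqrt[4]{27}\,\ex(-\tfrac18)$.

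The correct sign is read off from a leading-order comparison near $z=0$. Along $I_\infty$ with $t=1+\e$ for small $\e>0$, one has $u^3/(t(t-1))=1/\sqrt{\e}$, and integrating $\psi_1$ gives $z\sim 2(\z+1)\e^{1/6}/B(\tfrac13,\tfrac16)$; combined with $(\z+1)^3=3\sqrt 3\ii$, the left side can be rewritten as $24\sqrt 3\ii/(B(\tfrac13,\tfrac16)^3\,z^3)$. On the right, Jacobi's derivative formula (Fact \ref{fact:derivative}) together with the identities $\h_{0,\frac12}(0,\z)=\ex(-\tfrac1{24})\h_{0,0}(0,\z)$ and $\h_{\frac12,0}(0,\z)=\ex(\tfrac1{24})\h_{0,0}(0,\z)$ from Lemma \ref{lem:hi-theta} yield $R(z)\sim -1/(\pi^3\h_{0,0}(0,\z)^6\,z^3)$, and matching phases selects $c=+\sqrt[4]{27}\,\ex(-\tfrac18)$. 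I expect this last step to be the main obstacle, since it effectively pins down the phase of $\h_{0,0}(0,\z)^6$, a quantity not yet established in closed form in the paper; accordingly, the sign is most likely confirmed by a direct numerical check, in the spirit of the numerical arguments used in the proof of Lemma \ref{lem:hi-theta}.
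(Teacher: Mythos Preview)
Your divisor matching to obtain $u^3/(t(t-1))=cR(z)$ is the same opening step as in the paper. After that the two arguments diverge.

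You determine $c$ in two stages: first you square and compare with the product $\prod_{k}(1+\z^{2k}t/u^2)$ from Lemma~\ref{lem:2-ratio} to get $c^2=-3\sqrt{3}\ii$, and then you try to pick the sign via the leading asymptotics of both sides near $z=0$. Your computation of $c^2$ is correct, and so is your asymptotic $R(z)\sim -1/(\pi^3\h_{0,0}(0,\z)^6 z^3)$. As you rightly note, however, the sign step needs the phase of $\h_{0,0}(0,\z)^6$, which at this point in the paper is not available (Corollary~\ref{cor:1-form} is derived \emph{from} this lemma), so you fall back on a numerical check.

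The paper avoids this detour entirely. It evaluates both sides at the finite regular point $z=(\z+1)/3=\jmath_\z(P_{\infty,1})$, where $u^3/(t(t-1))=\sqrt{t/(t-1)}\to 1$ along $I_\infty$. On the theta side it uses the shift identity $\h_{a,b}(z+c\z+d)/\h_{a',b'}(z+c\z+d)=\ex(c(b'-b))\h_{a+c,b+d}(0)/\h_{a'+c,b'+d}(0)$ with $c=d=\tfrac13$ to rewrite $R\big(\tfrac{\z+1}{3}\big)$ as a ratio of theta constants with characteristics in $\{\tfrac13,\tfrac56\}$, and then reduces this ratio to $\ex(\tfrac18)/\sqrt[4]{27}$ using only the relations in Lemma~\ref{lem:hi-theta}. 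This gives $c=\ex(-\tfrac18)\sqrt[4]{27}$ in one shot, with no sign ambiguity and no new numerical input beyond what was already absorbed into Lemma~\ref{lem:hi-theta}. Your squaring trick is pleasant and reuses Lemma~\ref{lem:2-ratio}, but evaluating at $P_{\infty,1}$ rather than expanding near $P_1$ would have closed your argument cleanly without the circularity you flagged.
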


\begin{proof}
By Table \ref{tab:divisor}, we have 
$$
\frac{u^3}{t(t-1)}=c'\frac{\h_{0,0}(z)\h_{0,\frac{1}{2}}(z)
\h_{\frac{1}{2},0}(z)}{\h_{\frac{1}{2},\frac{1}{2}}(z)^3},
$$
where $c'$ is a constant.
We consider the limit as $t\to \infty$ with $t\in (1,\infty)$, 
$u\in (0,\infty)$. The left hand side of the above converges to $1$.
On the other hand, the right hand side of the above converges to 
\begin{align*}
& c'\frac{\h_{0,0}\big(\frac{\z+1}{3}\big)
\h_{0,\frac{1}{2}}\big(\frac{\z+1}{3}\big)
\h_{\frac{1}{2},0}\big(\frac{\z+1}{3}\big)}
{\h_{\frac{1}{2},\frac{1}{2}}\big(\frac{\z+1}{3}\big)^3}
=
c'\ex\Big(\frac{1}{3}\cdot\big(\frac{1}{2}+\frac{1}{2}\big)\Big)
\frac{\h_{\frac{1}{3},\frac{1}{3}}(0)\h_{\frac{1}{3},\frac{5}{6}}(0)
\h_{\frac{5}{6},\frac{1}{3}}\big(0)}
{\h_{\frac{5}{6},\frac{5}{6}}(0)^3}
\\
&=c'\ex\big(\frac{1}{3}-\frac{1}{8}-\frac{17}{24}+\frac{3}{6})
\frac{\h_{\frac{1}{3},\frac{1}{3}}(0)^3}
{\h_{\frac{1}{6},\frac{1}{6}}(0)^3}
=c'
\frac{\h_{\frac{1}{3},\frac{1}{3}}(0)^3}
{\h_{\frac{1}{6},\frac{1}{6}}(0)^3}=c'\ex\big(\frac{1}{8}\big)
\frac{1}{\sqrt[4]{27}}
\end{align*}
by Lemma \ref{lem:hi-theta}. Hence we have 
$c'=\ex\big(\frac{-1}{8}\big)\sqrt[4]{27}$.
\end{proof}

\begin{theorem}
\label{th:(t,u)-rep}
The inverse of $\jmath_\z:C_\z\ni (t,u)\mapsto z\in E_\z$ is given by 
\begin{align*}
t
&=
\frac{-3\sqrt{3}\ii
\h_{0,0}(z,\z)^2\h_{0,\frac{1}{2}}(z,\z)^2\h_{\frac{1}{2},0}(z,\z)^2}
{(\sqrt{3}\ii\h_{0,0}(z,\z)^2-\h_{\frac{1}{2},\frac{1}{2}}(z,\z)^2)^3},\\
u&=\ex\big(\frac{-1}{8}\big)\sqrt[4]{27}
\frac{\h_{0,0}(z,\z)\h_{0,\frac{1}{2}}(z,\z)
\h_{\frac{1}{2},0}(z,\z)\h_{\frac{1}{2},\frac{1}{2}}(z,\z)}
{\big(\sqrt{3}\ii\h_{0,0}(z,\z)^2-
\h_{\frac{1}{2},\frac{1}{2}}(z,\z)^2\big)^2}.
\end{align*}
\end{theorem}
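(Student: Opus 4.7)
The plan is to derive both formulas by combining the three preceding lemmas. For brevity set $A=\sqrt{3}\ii\h_{0,0}(z,\z)^2$ and $B=\h_{\frac{1}{2},\frac{1}{2}}(z,\z)^2$, so that Lemma \ref{lem:2-ratio} reads $1+t/u^2=A/B$.

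First I would handle $t$. Inverting the relation above gives $u^2/t=B/(A-B)$, and the defining equation $u^6=t^3(t-1)$ then yields $t-1=(u^2/t)^3=B^3/(A-B)^3$. Adding $1$ and using the elementary identity $(A-B)^3+B^3=A(A^2-3AB+3B^2)$, one obtains
\[
t=\frac{A(A^2-3AB+3B^2)}{(A-B)^3}.
\]
To match the claimed numerator, the remaining task is to show that $A^2-3AB+3B^2=-3\h_{0,\frac{1}{2}}(z,\z)^2\h_{\frac{1}{2},0}(z,\z)^2$. This is where Lemma \ref{lem:0011-exp} enters: multiplying its two identities, the prefactors $\ex(\pm 1/12)$ cancel, and invoking $\w^3=1$ together with $\w-\w^2=\sqrt{3}\ii$ collapses the product to $\h_{0,0}^4+\sqrt{3}\ii\,\h_{0,0}^2\h_{\frac{1}{2},\frac{1}{2}}^2-\h_{\frac{1}{2},\frac{1}{2}}^4$, which is exactly $-\tfrac{1}{3}(A^2-3AB+3B^2)$. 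Multiplying by $A=\sqrt{3}\ii\h_{0,0}^2$ then gives the advertised expression for $t$.

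For $u$ I would use the factorisation $u=\bigl(u^3/t(t-1)\bigr)\cdot\bigl(t(t-1)/u^2\bigr)$. From the formulas already established, $t(t-1)/u^2=(t-1)/(u^2/t)=(B^3/(A-B)^3)\cdot((A-B)/B)=B^2/(A-B)^2=\h_{\frac{1}{2},\frac{1}{2}}(z,\z)^4/(A-B)^2$, while Lemma \ref{lem:u3/(t(t-1))} supplies $u^3/(t(t-1))$ in closed form. Their product telescopes the power of $\h_{\frac{1}{2},\frac{1}{2}}$ from the denominator, yielding a single $\h_{\frac{1}{2},\frac{1}{2}}$ in the numerator and the prefactor $\ex(-1/8)\sqrt[4]{27}$ unchanged, which is the stated expression for $u$.

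The main obstacle is the cubic algebraic identity tying Lemma \ref{lem:0011-exp} to $(A-B)^3+B^3$; everything else is routine bookkeeping with the relations among the cube roots of unity. No independent normalisation needs to be pinned down at the end, since the multiplicative constants in the formulas for $u^2/t$, $u^3/(t(t-1))$, and the expansion of $\h_{0,\frac{1}{2}}^2\h_{\frac{1}{2},0}^2$ have already been fixed by the earlier lemmas via explicit limits at the ramification points.
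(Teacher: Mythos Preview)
Your proof is correct and uses the same ingredients as the paper (Lemmas \ref{lem:2-ratio}, \ref{lem:0011-exp}, \ref{lem:u3/(t(t-1))}), but organises the algebra a bit differently. For $t$, the paper multiplies all three identities of Lemma \ref{lem:2-ratio} via the factorisation $(1+t/u^2)(1+\z^2 t/u^2)(1+\z^4 t/u^2)=1+\tfrac{1}{t-1}$, obtaining the numerator $-3\sqrt3\ii\,\h_{0,0}^2\h_{0,\frac12}^2\h_{\frac12,0}^2$ directly and then using Lemma \ref{lem:0011-exp} to factor the denominator as $-(\sqrt3\ii\,\h_{0,0}^2-\h_{\frac12,\frac12}^2)^3$; you instead use only the first identity plus the curve equation to get the denominator $(A-B)^3$ directly, and then invoke Lemma \ref{lem:0011-exp} to recognise the numerator. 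These are mirror images of the same computation. For $u$, your factorisation $u=\tfrac{u^3}{t(t-1)}\cdot\tfrac{t(t-1)}{u^2}$ is exactly the paper's $u=\tfrac{u^3}{t(t-1)}\cdot\bigl((1+\tfrac{t}{u^2})-1\bigr)\cdot(t-1)$, and your evaluation of the second factor as $B^2/(A-B)^2$ is slightly cleaner than the paper's, which carries an intermediate unfactored denominator before simplifying.
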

\begin{proof}
Note that 
$$\big(1+\frac{t}{u^2}\big)\big(1+\frac{\z^2t}{u^2}\big)
\big(1+\frac{\z^4t}{u^2}\big)=1+\frac{t^3}{u^6}=1+\frac{1}{t-1}.$$
By Lemma \ref{lem:2-ratio}, we have 
$$1+\frac{1}{t-1}=-3\sqrt{3}\ii
\frac{\h_{0,0}(z)^2
\h_{0,\frac{1}{2}}(z)^2
\h_{\frac{1}{2},0}(z)^2}
{\h_{\frac{1}{2},\frac{1}{2}}(z)^6},
$$
which yields 
$$t=\frac{3\sqrt{3}\ii
\h_{0,0}(z)^2\h_{0,\frac{1}{2}}(z)^2\h_{\frac{1}{2},0}(z)^2}
{3\sqrt{3}\ii
\h_{0,0}(z)^2\h_{0,\frac{1}{2}}(z)^2\h_{\frac{1}{2},0}(z)^2
+\h_{\frac{1}{2},\frac{1}{2}}(z)^6}.$$
Rewrite $\h_{0,\frac{1}{2}}(z)^2$ and $\h_{\frac{1}{2},0}(z)^2$ 
in the denominator of this expression 
by $\h_{0,0}(z)^2$ and $\h_{\frac{1}{2},\frac{1}{2}}(z)^2$  
by Lemma \ref{lem:0011-exp}. 
Then it can be factorized as
$$
 3\sqrt{3}\ii 
\h_{0,0}(z)^2\h_{0,\frac{1}{2}}(z)^2\h_{\frac{1}{2},0}(z)^2
+\h_{\frac{1}{2},\frac{1}{2}}(z)^6\\
=-(\sqrt{3}\ii\h_{0,0}(z)^2-\h_{\frac{1}{2},\frac{1}{2}}(z)^2)^3.
$$
Hence we have the expression of $t$.

By Lemmas \ref{lem:2-ratio} and \ref{lem:u3/(t(t-1))}, the functions 
$1+\dfrac{t}{u^2}$ and $\dfrac{u^3}{t(t-1)}$ are expressed in terms 
$\h_{a,b}(z,\z)$.
We have 
\begin{align*}
 u&=\frac{u^3}{t(t-1)}\cdot 
\left(\big(1+\frac{t}{u^2}\big)-1\right)\cdot (t-1)\\
&=\ex\big(\frac{-1}{8}\big)\sqrt[4]{27}
\frac{\h_{0,0}(z)\h_{0,\frac{1}{2}}(z)\h_{\frac{1}{2},0}(z)}
{\h_{\frac{1}{2},\frac{1}{2}}(z)^3}
\cdot \frac{\sqrt{3}\ii\h_{0,0}(z)^2
-\h_{\frac{1}{2},\frac{1}{2}}(z)^2}
{\h_{\frac{1}{2},\frac{1}{2}}(z)^2}\\
&\hspace{5mm} \cdot 
\frac{-\h_{\frac{1}{2},\frac{1}{2}}(z)^6}
{3\sqrt{3}\ii
\h_{0,0}(z)^2\h_{0,\frac{1}{2}}(z)^2\h_{\frac{1}{2},0}(z)^2
+\h_{\frac{1}{2},\frac{1}{2}}(z)^6}\\
&=\ex\big(\frac{-1}{8}\big)\sqrt[4]{27}
\frac{\h_{0,0}(z)\h_{0,\frac{1}{2}}(z)
\h_{\frac{1}{2},0}(z)\h_{\frac{1}{2},\frac{1}{2}}(z)
(\sqrt{3}\ii\h_{0,0}(z)^2-\h_{\frac{1}{2},\frac{1}{2}}(z)^2)}
{-3\sqrt{3}\ii 
\h_{0,0}(z)^2\h_{0,\frac{1}{2}}(z)^2
\h_{\frac{1}{2},0}(z)^2-\h_{\frac{1}{2},\frac{1}{2}}(z)^6}.
\end{align*}
Note that the denominator of the last term is 
$(\sqrt{3}\ii\h_{0,0}(z)^2-\h_{\frac{1}{2},\frac{1}{2}}(z)^2)^3$.
Hence we have the expression of $u$. 
\end{proof}

\begin{cor}
\label{cor:1-form}
The pull back of the holomorphic $1$-from $\psi=\dfrac{udt}{t(t-1)}$ 
under the map $\jmath_\z^{-1}$ is 
$$
\ex\big(\frac{-1}{8}\big)2\pi\sqrt[4]{27}\h_{0,0}(0,\z)^2dz.
$$
The theta constant $\h_{0,0}(0,\z)$ is evaluated as
$$
\h_{0,0}(0,\z)=\ex\big(\dfrac{1}{48}\big)\dfrac{\sqrt[8]{3}}{\sqrt[3]{4}\pi}
\G\big(\dfrac{1}{3}\big)^{3/2}.
$$
The other theta constants $\h_{a,b}(0,\z)$ are
$$\begin{array}{ll}
\h_{0,\frac{1}{2}}(0,\z)
=\ex\big(\dfrac{-1}{48}\big)\dfrac{\sqrt[8]{3}}{\sqrt[3]{4}\pi}
\G\big(\dfrac{1}{3}\big)^{3/2},&
\h_{\frac{1}{2},0}(0,\z)
=\ex\big(\dfrac{1}{16}\big)\dfrac{\sqrt[8]{3}}{\sqrt[3]{4}\pi}
\G\big(\dfrac{1}{3}\big)^{3/2},\\[4mm]
\h_{\frac{1}{3},\frac{1}{3}}(0,\z)
=\ex\big(\dfrac{11}{144}\big)\dfrac{\sqrt[8]{3}}{2\pi}
\G\big(\dfrac{1}{3}\big)^{3/2},&
\h_{\frac{1}{6},\frac{1}{6}}(0,\z)
=\ex\big(\dfrac{5}{144}\big)\dfrac{\sqrt[8]{27}}{2\pi}
\G\big(\dfrac{1}{3}\big)^{3/2}.\\[4mm]
\h_{\frac{5}{6},\frac{1}{3}}(0,\z)
=\ex\big(\dfrac{-7}{144}\big)\dfrac{\sqrt[8]{3}}{2\pi}
\G\big(\dfrac{1}{3}\big)^{3/2},&
\h_{\frac{1}{3},\frac{5}{6}}(0,\z)
=\ex\big(\dfrac{53}{144}\big)\dfrac{\sqrt[8]{3}}{2\pi}
\G\big(\dfrac{1}{3}\big)^{3/2}.
\end{array}
$$
\end{cor}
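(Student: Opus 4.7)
The plan is to compute the pull back $(\jmath_\z^{-1})^*(\psi)$ in two different ways and compare. Since the space of holomorphic $1$-forms on the complex torus $E_\z$ is one-dimensional and spanned by $dz$, this pull back must be of the form $c\cdot dz$ for some constant $c$, so the entire corollary reduces to identifying $c$ and drawing the consequences for theta constants. I would extract $c$ by a local expansion at $z=0$, the image of $P_1$: from Table \ref{tab:divisor}, $u$ has a simple zero and $t-1$ has a zero of order $6$ at $P_1$, so writing $u = u_1 z + O(z^2)$ and $t-1 = c_6 z^6 + O(z^7)$ with $u_1,c_6\ne 0$, one gets $u\,dt/(t(t-1))\to 6u_1\,dz$ as $z\to 0$, hence $c=6u_1$. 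To read off $u_1$ from the formula for $u$ in Theorem \ref{th:(t,u)-rep}, I would use that $\h_{\frac{1}{2},\frac{1}{2}}(0,\z)=0$ while the other three even theta constants are nonzero, the identity $\h_{0,\frac{1}{2}}(0,\z)\h_{\frac{1}{2},0}(0,\z)=\h_{0,0}(0,\z)^2$ immediate from Lemma \ref{lem:hi-theta}, and Jacobi's derivative formula (Fact \ref{fact:derivative}), which combined with Lemma \ref{lem:hi-theta} reduces to $\frac{\pa}{\pa z}\h_{\frac{1}{2},\frac{1}{2}}(0,\z)=-\pi\h_{0,0}(0,\z)^3$. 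A direct substitution then gives $c=2\pi\ex(-\frac{1}{8})\sqrt[4]{27}\h_{0,0}(0,\z)^2$, which is the first assertion.

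On the other hand, the earlier normalization $\psi_1=\psi/\big((1-\z^2)B(\frac{1}{3},\frac{1}{6})\big)$ forces $(\jmath_\z^{-1})^*(\psi)=(1-\z^2)B(\frac{1}{3},\frac{1}{6})\,dz$. Equating this with the theta expression for $c$, substituting $1-\z^2=\sqrt{3}\,\ex(-\frac{1}{12})$, and rewriting $B(\frac{1}{3},\frac{1}{6})=\sqrt{3}\,\G(\frac{1}{3})^3/(2^{1/3}\pi)$ via the Legendre duplication and reflection formulas for $\G$ yields a closed form for $\h_{0,0}(0,\z)^2$. Taking a square root produces the stated value of $\h_{0,0}(0,\z)$, and the remaining six theta constants then follow directly from the constant-multiple relations assembled in Lemma \ref{lem:hi-theta}.

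The main obstacle is the careful bookkeeping of roots of unity: the $\ex(-\frac{1}{8})\sqrt[4]{27}$ factor inherited from Theorem \ref{th:(t,u)-rep}, the phase $\ex(-\frac{1}{12})$ in $1-\z^2$, and the $48$-th root of unity ambiguity introduced by the final square root must all be reconciled. As in the proof of Lemma \ref{lem:hi-theta}, the branch of the square root is pinned down by a direct numerical check.
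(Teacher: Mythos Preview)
Your argument is correct and follows the same overall architecture as the paper: identify $(\jmath_\z^{-1})^*\psi=c\,dz$, compute $c$ via Jacobi's derivative formula at $z=0$, match against the period normalization to evaluate $\h_{0,0}(0,\z)^2$, and then read off the remaining constants from Lemma~\ref{lem:hi-theta}. The one genuine difference is in how you extract $c$: the paper brute-force differentiates the theta expression for $(t-1)/t$, writes $\psi=u\cdot\dfrac{t}{t-1}\cdot\dfrac{dt}{t^2}$ in terms of $\h_{a,b}(z)$, and only then substitutes $z=0$; your local-expansion observation $c=6u_1$ bypasses that computation entirely and reduces the evaluation to reading off the linear Taylor coefficient of $u$ from Theorem~\ref{th:(t,u)-rep}, which is noticeably cleaner. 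For the period comparison the paper integrates over $[0,(\z+1)/3]$ while you invoke $\psi=(1-\z^2)B(\tfrac{1}{3},\tfrac{1}{6})\psi_1$ directly; since $(1-\z^2)\cdot\tfrac{\z+1}{3}=1$ these are the same identity.
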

\begin{proof}
Recall that 
$$\frac{t}{t-1}=-3\sqrt{3}\ii \frac{\h_{0,0}(z)^2
\h_{0,\frac{1}{2}}(z)^2\h_{\frac{1}{2},0}(z)^2}
{\h_{\frac{1}{2},\frac{1}{2}}(z)^6}.
$$
Thus we have 
\begin{align*}
& \frac{dt}{t^2}=d\Big(1-\frac{1}{t}\Big)=d\Big(\frac{t-1}{t}\Big)\\
&=\frac{dz}{-3\sqrt{3}\ii}\Bigg[
\frac{
6\h_{\frac{1}{2},\frac{1}{2}}(z)^5\h_{\frac{1}{2},\frac{1}{2}}(z)'\cdot 
\h_{0,0}(z)^2\h_{0,\frac{1}{2}}(z)^2\h_{\frac{1}{2},0}(z)^2}
{\h_{0,0}(z)^4\h_{0,\frac{1}{2}}(z)^4\h_{\frac{1}{2},0}(z)^4}\\
& \hspace{25mm}-\frac{\h_{\frac{1}{2},\frac{1}{2}}(z)^6\cdot
(\h_{0,0}(z)^2\h_{0,\frac{1}{2}}(z)^2\h_{\frac{1}{2},0}(z)^2)'}
{\h_{0,0}(z)^4\h_{0,\frac{1}{2}}(z)^4\h_{\frac{1}{2},0}(z)^4}\Bigg],
\end{align*}
where $f(z)'= \dfrac{df(z)}{dz}$.
Since 
$\psi=u\cdot \dfrac{t}{t-1} \cdot \dfrac{dt}{t^2}$, the pull-back of $\psi$ 
under the map $\jmath_\z^{-1}$ is $\ex\big(\frac{-1}{8}\big)\sqrt[4]{27}$ 
times 
$$
\frac{6\h_{0,0}(z)^2\h_{0,\frac{1}{2}}(z)^2
\h_{\frac{1}{2},0}(z)^2
\h_{\frac{1}{2},\frac{1}{2}}(z)'-
\h_{\frac{1}{2},\frac{1}{2}}(z)(\h_{0,0}(z)^2\h_{0,\frac{1}{2}}(z)^2
\h_{\frac{1}{2},0}(z)^2)'}{(\sqrt{3}\ii 
\h_{0,0}(z)^2-\h_{\frac{1}{2},\frac{1}{2}}(z)^2)^2
\h_{0,0}(z)\h_{0,\frac{1}{2}}(z)
\h_{\frac{1}{2},0}(z)
}dz.
$$
It should be a constant times $dz$. We determine this constant by 
substituting $z=0$ into the above. By Fact \ref{fact:derivative}  
and Lemma \ref{lem:hi-theta}, we have 
$${\jmath_\z^{-1}}^*(\psi)
=\ex\big(\frac{-1}{8}\big)2\pi\sqrt[4]{27}\h_{0,0}(0,\z)^2dz.$$
Note that 
$$B\big(\frac{1}{3},\frac{1}{6}\big)
=\int_1^\infty \psi=\int_{\jmath_\z(P_1)}^{\jmath_\z(P_{\infty,1})} {\jmath_\z^{-1}}^*
(\psi)=
\ex\big(\frac{-1}{8}\big)2\pi\sqrt[4]{27}\h_{0,0}(0,\z)^2
\cdot\Big(\frac{\z+1}{3}-0\Big)
$$
by Proposition \ref{prop:jmath-image}.
The  well-known formula 
$$\G\big(\frac{1}{6}\big)=\frac{1}{\sqrt[3]{2}}\frac{\sqrt{3}}{\sqrt{\pi}}
\G\big(\frac{1}{3}\big)^2$$
yields that 
$$B\big(\frac{1}{3},\frac{1}{6}\big)=
\frac{\G\big(\frac{1}{3}\big)\G\big(\frac{1}{6}\big)}
{\G\big(\frac{1}{2}\big)}=\frac{\sqrt{3}}{\sqrt[3]{2}\pi}
\G\big(\frac{1}{3}\big)^3.
$$
Hence we evaluate the theta constant as 
$$\h_{0,0}(0,\z)^2=
\ex\big(\frac{1}{24}\big)\frac{\sqrt[4]{3}}{\sqrt[3]{16}\pi^2}
\G\big(\frac{1}{3}\big)^3.$$
We can determine the sign of $\h_{0,0}(0,\z)$ by a numerical computation. 
The rests can be obtained by Lemma \ref{lem:hi-theta}.
\end{proof}

\begin{cor}
\label{cor:inv-Schwarz2}
The inverse of the Schwarz map (\ref{eq:S(1/3,0,1/2)}) 
for $\CF(\frac{1}{3},0,\frac{1}{2})$ 
is given by
$$
x=\frac{-3\sqrt{3}\ii
\h_{0,0}(z,\z)^2\h_{0,\frac{1}{2}}(z,\z)^2 \h_{\frac{1}{2},0}(z,\z)^2}
{(\sqrt{3}\ii\h_{0,0}(z,\z)^2-\h_{\frac{1}{2},\frac{1}{2}}(z,\z)^2)^3}.
$$
\end{cor}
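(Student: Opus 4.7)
The plan is short and mirrors the argument for Corollary \ref{cor:inv-Schwarz} in the $(2,4,4)$ case: the statement is essentially the first-coordinate component of Theorem \ref{th:(t,u)-rep}.

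First, I would invoke the identification established at the end of Section 4.1, namely that the Schwarz map (\ref{eq:S(1/3,0,1/2)}) factors through the Abel-Jacobi map as
$$
C_\z/\la\rho_\z\ra \ni x \longmapsto \int_1^x \psi_1 \in E_\z/\la\z\ra.
$$
Since the covering transformation $\rho_\z:(t,u)\mapsto (t,\z u)$ fixes the $t$-coordinate, $t$ descends to a function on $C_\z/\la\rho_\z\ra$; consequently the inverse of the Schwarz map sends $z\in E_\z/\la\z\ra$ to $x=t$, where $t$ is the first coordinate of $\jmath_\z^{-1}(z)$. Substituting the expression for $t$ from Theorem \ref{th:(t,u)-rep} then yields the displayed formula.

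Second, I would verify that the right-hand side is well-defined on the quotient $E_\z/\la\z\ra$, i.e.\ invariant under $z\mapsto \z z$. This is automatic from the construction, because the Abel-Jacobi map intertwines the action of $\rho_\z$ on $C_\z$ with multiplication by $\z$ on $E_\z$ (as $\rho_\z^*\psi_1=\z\psi_1$ and $\jmath_\z(P_1)=0$), but if one wishes to see it directly on the theta-function side it follows from Lemma \ref{lem:omega-times}: the three even characteristics $(0,0), (0,\tfrac12), (\tfrac12,0)$ appearing in the numerator permute among themselves (up to $24$th roots of unity and a common exponential factor $\ex(z^2/(2\z))$), while $\h_{\frac12,\frac12}$ in the denominator transforms by $\w$ and the same exponential; the degree-$6$ numerator and the cubed denominator then acquire matching prefactors that cancel in the quotient.

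Since the substantive content lies in Theorem \ref{th:(t,u)-rep} and in the geometric identification of the Schwarz map with the quotient Abel-Jacobi map, there is no real obstacle; the only item that demands a touch of care is the bookkeeping of the $24$th roots of unity in Lemma \ref{lem:omega-times} should one wish to check the $\la\z\ra$-invariance directly from the theta-function expression rather than appealing to the geometric argument.
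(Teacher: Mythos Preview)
Your proposal is correct and follows the same route as the paper: the formula is read off from the $t$-coordinate in Theorem \ref{th:(t,u)-rep}, and the $\la\z\ra$-invariance is checked via Lemma \ref{lem:omega-times}. The only quibble is that the identification of the Schwarz map with the quotient Abel--Jacobi map for $C_\z$ is established at the end of Section~5.1, not Section~4.1.
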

\begin{proof}
It is clear by Theorem \ref{th:(t,u)-rep}. We can check this map 
is invariant under the action of $\la \z\ra$ by Lemma \ref{lem:omega-times}.
\end{proof}

\begin{cor} For any point $z$ around $0$, we have 
\label{cor:HGF-Theta2}
$$\frac{\sqrt[3]{16}\pi\z^2}{\G\big(\frac{1}{3}\big)^3}\cdot 
\frac{1}
{\sqrt{1-\sqrt{3}\ii\dfrac{\h_{0,0}(z,\z)^2}
{\h_{\frac{1}{2},\frac{1}{2}}(z,\z)^2}}}
\cdot 
F\big(\frac{1}{6},\frac{1}{2},\frac{7}{6};
\frac{\h_{\frac{1}{2},\frac{1}{2}}(z,\z)^6}
{(\h_{\frac{1}{2},\frac{1}{2}}(z,\z)^2-\sqrt{3}\ii\h_{0,0}(z,\z)^2)^3}\big)
=z,
$$
where the branch of the square root is selected as 
$\sqrt{\z^2}=\z$ for $z=\dfrac{\z}{2}$.
\end{cor}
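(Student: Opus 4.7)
The plan is to substitute the expression for $x$ from Corollary \ref{cor:inv-Schwarz2} into the Schwarz map (\ref{eq:S(1/3,0,1/2)}), simplify, and then fix the remaining mod-monodromy ambiguity by a single explicit evaluation. The first step is to compute $1-x$. Using the factorization identity
\[
-\bigl(\sqrt{3}\ii\,\h_{0,0}(z,\z)^2 - \h_{\frac{1}{2},\frac{1}{2}}(z,\z)^2\bigr)^3 = 3\sqrt{3}\ii\,\h_{0,0}(z,\z)^2\h_{0,\frac{1}{2}}(z,\z)^2\h_{\frac{1}{2},0}(z,\z)^2 + \h_{\frac{1}{2},\frac{1}{2}}(z,\z)^6
\]
already established in the proof of Theorem \ref{th:(t,u)-rep}, I obtain
\[
1-x = \frac{\h_{\frac{1}{2},\frac{1}{2}}(z,\z)^6}{\bigl(\h_{\frac{1}{2},\frac{1}{2}}(z,\z)^2 - \sqrt{3}\ii\,\h_{0,0}(z,\z)^2\bigr)^3}.
\]
A natural branch of $\sqrt[6]{1-x}$ is therefore $\h_{\frac{1}{2},\frac{1}{2}}/\bigl(\h_{\frac{1}{2},\frac{1}{2}}^2 - \sqrt{3}\ii\,\h_{0,0}^2\bigr)^{1/2}$, which coincides with the factor $1/\sqrt{1-\sqrt{3}\ii\,\h_{0,0}^2/\h_{\frac{1}{2},\frac{1}{2}}^2}$ in the statement up to a sixth root of unity. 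Combined with the identity $B(\tfrac{1}{3},\tfrac{1}{6}) = \sqrt{3}\,\G(\tfrac{1}{3})^3/(\sqrt[3]{2}\pi)$ from the proof of Corollary \ref{cor:1-form}, the Schwarz-map prefactor $2\sqrt{3}\z/B(\tfrac{1}{3},\tfrac{1}{6}) = 2\sqrt[3]{2}\pi\z/\G(\tfrac{1}{3})^3$ differs from the stated prefactor $\sqrt[3]{16}\pi\z^2/\G(\tfrac{1}{3})^3$ by exactly the scalar $\z$. Hence the left-hand side of the corollary agrees with $z$ modulo the monodromy group $\la\z\ra\ltimes\Z[\z]$.

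The second step pins down the specific monodromy element. Both sides vanish at $z=0$: the right-hand side trivially, and on the left the factor $1/\sqrt{1-\sqrt{3}\ii\,\h_{0,0}^2/\h_{\frac{1}{2},\frac{1}{2}}^2} = \h_{\frac{1}{2},\frac{1}{2}}/\sqrt{\h_{\frac{1}{2},\frac{1}{2}}^2 - \sqrt{3}\ii\,\h_{0,0}^2}$ vanishes because $\h_{\frac{1}{2},\frac{1}{2}}(0,\z)=0$. Thus the translation part of the monodromy is zero, and the left-hand side equals $\z^k z$ for some $k\in\{0,\dots,5\}$. To determine $k$, I evaluate at $z=\z/2=\jmath_\z(P_{0,1})$; there $x=t=0$, Lemma \ref{lem:2-ratio} gives $1-\sqrt{3}\ii\,\h_{0,0}(\z/2,\z)^2/\h_{\frac{1}{2},\frac{1}{2}}(\z/2,\z)^2 = \z^2$, and the argument of $F$ becomes $1$. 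The Gauss-Kummer formula
\[
F\bigl(\tfrac{1}{6},\tfrac{1}{2},\tfrac{7}{6};1\bigr) = \frac{\G(7/6)\G(1/2)}{\G(1)\G(2/3)},
\]
together with $\G(\tfrac{1}{3})\G(\tfrac{2}{3})=2\pi/\sqrt{3}$ and $\G(\tfrac{1}{6})=\sqrt{3}\,\G(\tfrac{1}{3})^2/(\sqrt[3]{2}\sqrt{\pi})$, reduces this value to $\G(\tfrac{1}{3})^3/(4\sqrt[3]{2}\pi)$. With the prescribed branch $\sqrt{\z^2}=\z$, a direct arithmetic simplification of the constants yields $\z/2$ for the left-hand side, matching $z=\z/2$ and forcing $k=0$.

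The main obstacle is purely branch-theoretic: the sixth root of $1-x$ and the square root in the statement are each defined only up to a sixth root of unity, while the identification with $z$ is a priori valid only modulo the full monodromy group, whose rotational part $\la\z\ra$ can absorb any such discrepancy. The concrete evaluation at $z=\z/2$ via the Gauss-Kummer formula, anchored by the branch convention $\sqrt{\z^2}=\z$ stated in the corollary, is precisely what selects the trivial rotation and validates the specific prefactor $\sqrt[3]{16}\pi\z^2/\G(\tfrac{1}{3})^3$.
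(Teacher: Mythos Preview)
Your proof is correct and follows essentially the same route as the paper's: compute $1-x$ via the cubic factorization from Theorem~\ref{th:(t,u)-rep}, obtain the identity modulo the monodromy group $\la\z\ra\ltimes\Z[\z]$, use the common vanishing at $z=0$ to kill the translation part, and then evaluate at $z=\z/2$ with the Gauss--Kummer formula to pin down the $\la\z\ra$-rotation. The only difference is cosmetic: the paper computes $1-\sqrt{3}\ii\,\h_{0,0}(\z/2)^2/\h_{\frac{1}{2},\frac{1}{2}}(\z/2)^2=\z^2$ directly from the theta shift and Lemma~\ref{lem:hi-theta}, whereas you invoke Lemma~\ref{lem:2-ratio}; and you carry out the final arithmetic for $F(\tfrac{1}{6},\tfrac{1}{2},\tfrac{7}{6};1)$ explicitly, which the paper leaves implicit.
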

\begin{proof}
Let $z$ be the image of the Schwarz map (\ref{eq:S(1/3,0,1/2)}). 
We have seen in Proof of Theorem \ref{th:(t,u)-rep} that 
$$\frac{1}{1-x}
=\frac{3\sqrt{3}\ii
\h_{0,0}(z)^2\h_{0,\frac{1}{2}}(z)^2\h_{\frac{1}{2},0}(z)^2+
\h_{\frac{1}{2},\frac{1}{2}}(z)^6}
{\h_{\frac{1}{2},\frac{1}{2}}(z)^6}
=\frac{(\h_{\frac{1}{2},\frac{1}{2}}(z)^2-\sqrt{3}\ii\h_{0,0}(z)^2)^3}
{\h_{\frac{1}{2},\frac{1}{2}}(z)^6}.$$
Thus we have the desired identity 
modulo the monodromy group of $\CF\big(\frac{1}{3},0,\frac{1}{2}\big)$.
Since the both sides of the above become $0$ for $z=0$, 
their difference is represented as the group $\la \z\ra$.
Consider the limit of the both sides as $z\to \dfrac{\z}{2}$ along 
the segment connecting $0$ and $\dfrac{\z}{2}$. 
Since $1/(1-x)$ converges to $1$ by this limit, 
it turns out that $x$ converges to $0$.
Use
$$ 
1-\sqrt{3}\ii\frac{\h_{0,0}(\frac{\z}{2},\z)^2}
{\h_{\frac{1}{2},\frac{1}{2}}(\frac{\z}{2},\z)^2}
=
1+\sqrt{3}\ii\frac{\h_{\frac{1}{2},0}(0,\z)^2}
{\h_{0,\frac{1}{2}}(0,\z)^2}=1+\sqrt{3}\ii\z=\z^2
$$
and the Gauss-Kummer formula.
\end{proof}


\subsection{$(1+\z)$-multiplication}
\begin{theorem}
\label{th:(1+z)-multiple}
Let $z\in E_\z$ be the image of $(t,u)\in C_\z$ under the Abel-Jacobi map 
$\jmath_\z$. Then we have
\begin{equation}
\label{eq:(1+z)-mul}
\jmath_\z^{-1}((1+\z)z) =\left(\frac{t(9-8t)^2}{(4t-3)^3},
\ex\big(\frac{1}{12}\big)\sqrt{3}u\frac{9-8t}{(4t-3)^2}
\right).
\end{equation}
\end{theorem}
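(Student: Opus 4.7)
The plan is to identify the proposed rational map $\phi:(t,u)\mapsto(t',u')$ with the endomorphism of $C_\z$ induced by $(1+\z)$-multiplication on $E_\z$. Since any endomorphism of an elliptic curve is determined by its value at the identity and its action on the one-dimensional space $H^0(C_\z,\Omega^1)$, it suffices to verify three things: (i) $\phi$ maps $C_\z$ into $C_\z$, (ii) $\phi(P_1)=P_1$, and (iii) $\phi^*\psi=(1+\z)\psi$. The crucial preparatory identity is
$$t(9-8t)^2-(4t-3)^3=-27(t-1),$$
which I would verify by direct polynomial expansion; it dictates the whole form of the answer and reflects the fact that $\phi$ has degree $|1+\z|^2=3$ and fixes $P_1$ with suitable multiplicity.

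For (i), the identity above gives $t'-1=-27(t-1)/(4t-3)^3$, hence $(t')^3(t'-1)=-27\,t^3(t-1)(9-8t)^6/(4t-3)^{12}$. On the other hand, $(u')^6=\bigl(\ex(\tfrac{1}{12})\sqrt{3}\bigr)^6 u^6(9-8t)^6/(4t-3)^{12}$; since $\ex(\tfrac{1}{12})^6=\ex(\tfrac{1}{2})=-1$ and $(\sqrt{3})^6=27$, this equals $-27\,u^6(9-8t)^6/(4t-3)^{12}$, and the curve relation $u^6=t^3(t-1)$ then gives $(u')^6=(t')^3(t'-1)$. Point (ii) is trivial: $t=1,u=0$ yield $t'=1,u'=0$. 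For (iii), write $\psi=t^2\,dt/u^5$; differentiating $t'=t(9-8t)^2/(4t-3)^3$ and using the same polynomial identity yields $dt'/dt=-27(9-8t)/(4t-3)^4$. Combining with $(t')^2=t^2(9-8t)^4/(4t-3)^6$ and $(u')^5=\bigl(\ex(\tfrac{1}{12})\sqrt{3}\bigr)^5 u^5(9-8t)^5/(4t-3)^{10}$, the factors $(9-8t)^5(4t-3)^{10}$ cancel exactly, leaving
$$\phi^*\psi=\frac{-27}{\bigl(\ex(\tfrac{1}{12})\sqrt{3}\bigr)^5}\,\psi
=\sqrt{3}\,\ex(\tfrac{1}{12})\,\psi.$$
A direct computation shows $1+\z=(3+\sqrt{3}\ii)/2=\sqrt{3}\,\ex(\tfrac{1}{12})$, so $\phi^*\psi=(1+\z)\psi$, as required.

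The main obstacles are bookkeeping rather than conceptual: one must spot (or derive) the polynomial identity $t(9-8t)^2-(4t-3)^3=-27(t-1)$ and carefully track the sixth roots of unity in the constant $\ex(\tfrac{1}{12})\sqrt{3}$ so that the phases collapse cleanly to $1+\z$. As an alternative route, one could mirror the proof of Theorem \ref{th:(1+i)-multiple} by substituting the theta function expressions from Theorem \ref{th:(t,u)-rep} at the argument $(1+\z)z$ and applying Lemma \ref{lem:(z+1)-times}, then reducing via Lemma \ref{lem:0011-exp} and Lemma \ref{lem:hi-theta}; this is formally parallel to the earlier proof but considerably more laborious, which is why the endomorphism-uniqueness approach above is cleaner.
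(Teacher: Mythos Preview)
Your proof is correct and takes a genuinely different route from the paper. The paper proceeds exactly along the lines you sketch as the ``alternative route'': it substitutes $(1+\z)z$ into the theta expressions of Theorem~\ref{th:(t,u)-rep}, rewrites each $\h_{a,b}((1+\z)z)$ via Lemma~\ref{lem:(z+1)-times}, cancels the theta constants using Lemma~\ref{lem:hi-theta}, and then converts the resulting ratios of $\h_{a,b}(z)^2/\h_{\frac12,\frac12}(z)^2$ back to $t$ and $u$ through Lemma~\ref{lem:2-ratio} and the curve relation $u^6=t^3(t-1)$. This is a direct but heavy computation, parallel to the proof of Theorem~\ref{th:(1+i)-multiple}.

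Your approach instead exploits the rigidity of endomorphisms of elliptic curves: once you know the candidate map $\phi$ is a self-morphism of $C_\z$ fixing $P_1$, its identity with $[1+\z]$ is forced by the single scalar it induces on $H^0(C_\z,\Omega^1)$. The polynomial identity $t(9-8t)^2-(4t-3)^3=-27(t-1)$ does all the real work, and the phase bookkeeping for $\ex(\tfrac{1}{12})\sqrt{3}=1+\z$ is short. What you gain is a dramatically cleaner argument that avoids any theta manipulation; what the paper's approach gains is that it \emph{derives} the formulas for $t'$ and $u'$ from scratch rather than verifying them post hoc, and it stays internally consistent with the theta-function machinery built up in the preceding sections. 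Your argument does implicitly use that a rational map between smooth projective curves extends to a morphism and that a morphism of elliptic curves fixing the origin is a homomorphism---both standard, but worth a one-line citation if you write this up.
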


\begin{proof}
We set $(t',u')=\jmath_\z^{-1}((1+\z)z)$.
Then $t'$ is given by 
the substitution $z$ to $(z+1)z$ into 
the expression of $t$ in Theorem \ref{th:(t,u)-rep}. 
Rewrite $\h_{a,b}((1+\z)z)$ in terms of $\h_{a,b}(z)$ by 
Lemma \ref{lem:(z+1)-times}. Its numerator $N(t')$ is 
\begin{align*}
N(t')=&3\sqrt{3}\ii\h_{0,0}(z)^2\h_{0,\frac{1}{2}}(z)^2
\h_{\frac{1}{2},0}(z)^2\\
&\  \times(\h_{0,0}(z)^2-\ii\h_{0,\frac{1}{2}}(z)^2)^2
(\h_{0,0}(z)^2+\ii\h_{\frac{1}{2},0}(z)^2)^2
(\h_{0,\frac{1}{2}}(z)^2-\h_{\frac{1}{2},0}(z)^2)^2,
\end{align*}
and its denominator $D(t')$ is 
\begin{align*}
D(t')=&\{-(\sqrt{3}\h_{\frac{1}{2},0}(z)^2+
\h_{\frac{1}{2},\frac{1}{2}}(z)^2)
(\h_{0,0}(z)^4-\h_{0,\frac{1}{2}}(z)^4)\\
&\  +2\ii(\sqrt{3}\h_{\frac{1}{2},0}(z)^2
-\h_{\frac{1}{2},\frac{1}{2}}(z)^2)\h_{0,0}(z)^2
\h_{0,\frac{1}{2}}(z)^2\}^3.
\end{align*}
In this computation, 
the theta constants $\h_{0,0}(0)$, $\h_{0,\frac{1}{2}}(0)$, 
$\h_{\frac{1}{2},0}(0)$ are canceled 
by Lemma \ref{lem:hi-theta}. 
Divide them by $\h_{\frac{1}{2},\frac{1}{2}}(z)^{18}$ and 
rewrite 
$$\frac{\h_{0,0}(z)^2}
{\h_{\frac{1}{2},\frac{1}{2}}(z)^2}=\frac{1+\dfrac{t}{u^2}}{\sqrt{3}\ii},
\quad 
\frac{\h_{\frac{1}{2},0}(z)^2}
{\h_{\frac{1}{2},\frac{1}{2}}(z)^2}=\frac{1+\dfrac{\w t}{u^2}}{-\sqrt{3}},
\quad 
\frac{\h_{0,\frac{1}{2}}(z)^2}
{\h_{\frac{1}{2},\frac{1}{2}}(z)^2}=\frac{1+\dfrac{\w^2 t}{u^2}}{\sqrt{3}}.
$$
Then we have
$$t'=\left(\frac{-(t^3+u^6)(t^3-8u^6)^2}{27u^{18}}\right)\Bigg/
\left(\frac{-(t^3+4u^6)^3}{27u^{18}}\right)
=\frac{t(9-8t)^2}{(4t-3)^3},
$$
where we use the relation $u^6=t^3(t-1)$.

By the same way, we can express $u'$ in terms of $\h_{a,b}(z)$'s, 
whose numerator $N(u')$ and denominator $D(u')$ are 
\begin{align*}
N(u')=&\ex\big(\frac{1}{8}\big)
\sqrt[4]{27}\h_{0,0}(z)\h_{0,\frac{1}{2}}(z)\h_{\frac{1}{2},0}(z)
\h_{\frac{1}{2},\frac{1}{2}}(z)
\big(\h_{0,\frac{1}{2}}(z)^2-\h_{\frac{1}{2},0}(z)^2\big)\\
&\ \cdot\big(\h_{0,0}(z)^4+\h_{0,\frac{1}{2}}(z)^4\big)
\big(\h_{0,0}(z)^2+\ii\h_{\frac{1}{2},0}(z)^2\big)
,\\
D(u')=&\{(\sqrt{3}\h_{\frac{1}{2},0}(z)^2+
\h_{\frac{1}{2},\frac{1}{2}}(z)^2)
(\h_{0,0}(z)^4-\h_{0,\frac{1}{2}}(z)^4)\\
&\ -2\ii(\sqrt{3}\h_{\frac{1}{2},0}(z)^2
-\h_{\frac{1}{2},\frac{1}{2}}(z)^2)\h_{0,0}(z)^2
\h_{0,\frac{1}{2}}(z)^2\}^2.
\end{align*}
Divide them by $\h_{\frac{1}{2},\frac{1}{2}}(z)^8(\sqrt{3}\ii
\h_{0,0}(z)^2-\h_{\frac{1}{2},\frac{1}{2}}(z)^2)^2$. 
We factor out $u$ from the numerator as 
\begin{align*}
& \frac{N(u')}
{\h_{\frac{1}{2},\frac{1}{2}}(z)^8
(\sqrt{3}\ii\h_{0,0}(z)^2-\h_{\frac{1}{2},\frac{1}{2}}(z)^2)^2}\\
&=\ii u \frac{
\big(\h_{0,\frac{1}{2}}(z)^2-\h_{\frac{1}{2},0}(z)^2\big)
\big(\h_{0,0}(z)^4+\h_{0,\frac{1}{2}}(z)^4\big)
\big(\h_{0,0}(z)^2+\ii\h_{\frac{1}{2},0}(z)^2\big)}
{\h_{\frac{1}{2},\frac{1}{2}}(z)^8}.
\end{align*}
Since the rest terms are expressed in terms of $\h_{a,b}(z)^2$, 
we can compute them quite similarly to the case of $t'$.
Hence we have 
$$u'=\ii u\left(\frac{(3\ii-\sqrt{3})(-t^3+8u^6)t}{18u^8}\right)\Bigg/
\left(\frac{(t^3+4u^6)^2}{9t^2u^8}\right)=
\frac{3+\sqrt{3}\ii}{2}\cdot u\cdot\frac{(9-8t)}{(4t-3)^2}.$$
It is easy to see that $(t',u')$ satisfies ${u'}^6={t'}^3(t'-1)$. 
\end{proof}

\section{Limits of mean iterations}
\subsection{Limit formula by $F(\frac{1}{4},\frac{1}{2},\frac{5}{4};x)$}
Theorem \ref{th:(1+i)-multiple} is interpreted as follows.
\begin{theorem}
\label{th:interpret}
Let 
$P_x=(x,\sqrt[4]{x^2(x-1)})$ be a point of the curve $C$.
We set 
$$P_{x'}=\left(\frac{(2-x)^2}{x^2}, 
\frac{(1+\ii)(2-x)\sqrt[4]{x^2(x-1)}}{x^2}\right)\in C.$$
Then we have
$$\int_{P_1}^{P_{x'}}\f\equiv (1+\ii)\int_{P_1}^{P_x}\f\quad \bmod \la \ii\ra
\ltimes \Z[\ii].
$$
\end{theorem}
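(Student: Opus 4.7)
The plan is to deduce Theorem~\ref{th:interpret} as a direct reformulation of Theorem~\ref{th:(1+i)-multiple}, passing from the normalized Abel--Jacobi variable $z\in E_\ii$ to the integral $\int\f$ itself.

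First I would set $z=\jmath_\ii(P_x)\in E_\ii=\C/L_\ii$. By the definition of $\jmath_\ii$ via the normalized $1$-form $\f_1=\f/((1-\ii)B(\frac{1}{4},\frac{1}{4}))$, the chosen path gives
$$\int_{P_1}^{P_x}\f=(1-\ii)B\big(\tfrac{1}{4},\tfrac{1}{4}\big)\,z$$
modulo the period lattice $\Z\int_A\f+\Z\int_B\f$, which equals $(1-\ii)B(\frac{1}{4},\frac{1}{4})\cdot\Z[\ii]$ since $\int_B\f=(1-\ii)B(\frac{1}{4},\frac{1}{4})$ and $\int_A\f=\ii\int_B\f$.

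Next I would apply Theorem~\ref{th:(1+i)-multiple} with $(t,u)=(x,\sqrt[4]{x^2(x-1)})$. Using $(t-2)^2=(2-t)^2$ in the first coordinate and substituting directly in the second, the formula (\ref{eq:(1+i)-mul}) reproduces exactly the coordinates of $P_{x'}$ given in the statement. Hence $\jmath_\ii(P_{x'})\equiv(1+\ii)z\pmod{L_\ii}$, from which
$$\int_{P_1}^{P_{x'}}\f\equiv(1-\ii)(1+\ii)B\big(\tfrac{1}{4},\tfrac{1}{4}\big)\,z\equiv(1+\ii)\int_{P_1}^{P_x}\f$$
modulo the period lattice. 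After rescaling by $(1-\ii)B(\frac{1}{4},\frac{1}{4})$ to pass to the Schwarz-map variable $w=f_1/((1-\ii)f_2)$, the period lattice becomes exactly the translation part $\Z[\ii]$ of the monodromy group $\la\ii\ra\ltimes\Z[\ii]$, so the displayed congruence is precisely the one asserted in the theorem.

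The only real obstacle is bookkeeping of branches: one must check that the algebraic formula (\ref{eq:(1+i)-mul}) for $u'$, when applied to $u=\sqrt[4]{x^2(x-1)}$, produces exactly $(1+\ii)(2-x)\sqrt[4]{x^2(x-1)}/x^2$, the branch used in defining $P_{x'}$. Since this is immediate by substitution and no independent choice of the fourth root is at stake beyond the one already fixed at $P_x$, the derivation reduces to Theorem~\ref{th:(1+i)-multiple} with essentially no extra analytic content.
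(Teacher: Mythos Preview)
Your proposal is correct and matches the paper's approach exactly: the paper states Theorem~\ref{th:interpret} without proof, introducing it with the sentence ``Theorem~\ref{th:(1+i)-multiple} is interpreted as follows,'' so the intended argument is precisely the unwinding you give---identify $P_{x'}$ with $\jmath_\ii^{-1}((1+\ii)z)$ via (\ref{eq:(1+i)-mul}), then pass from the normalized variable $z=\int\f_1$ back to $\int\f$ by the scalar $(1-\ii)B(\tfrac14,\tfrac14)$. Your remark that the period lattice already accounts for the translation part $\Z[\ii]$ of the monodromy group, with the $\langle\ii\rangle$-factor merely weakening the congruence, is also the right reading of the paper's formulation.
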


\begin{cor}
\label{cor:eq-HGF}
The following identity holds around $x=1$:
$$\frac{1}{\sqrt{x}}
F(\frac{1}{4},\frac{1}{2},\frac{5}{4},1-\frac{(2-x)^2}{x^2})
=F(\frac{1}{4},\frac{1}{2},\frac{5}{4},1-x).
$$
\end{cor}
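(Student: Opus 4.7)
The plan is to specialize Theorem \ref{th:interpret} to the explicit hypergeometric-series representation of the Schwarz map in (\ref{eq:S(1/4,0,1/2)}) and then reconcile the branch of the fourth root. First, since $f_2(x)=B(1/4,1/4)$ is a nonzero constant by Remark \ref{rem:binZ}, the Schwarz-map ratio $f_1(x)/((1-\ii)f_2(x))$ agrees exactly with $\jmath_\ii(P_x)$ on any simply connected neighborhood $V$ of $x=1$ on which the branch of $\sqrt[4]{1-x}$ used in (\ref{eq:S(1/4,0,1/2)}) is fixed. Theorem \ref{th:interpret} then reads
$$\jmath_\ii(P_{x'})\equiv(1+\ii)\jmath_\ii(P_x)\pmod{\la\ii\ra\ltimes\Z[\ii]},$$
and both sides vanish at $x=1$ because $P_{x'}|_{x=1}=P_1$. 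This forces the additive $\Z[\ii]$-translate to be zero, while the remaining $\la\ii\ra$-ambiguity will be absorbed into the branch choice of $\sqrt[4]{1-x'}$, so on $V$ the congruence lifts to an exact analytic identity.

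Next, I substitute the explicit formula (\ref{eq:S(1/4,0,1/2)}) on both sides and cancel the common prefactor $2\sqrt{2}\ii/B(1/4,1/4)$, reducing the relation to
$$\sqrt[4]{1-x'}\,F(1/4,1/2,5/4;1-x')=(1+\ii)\sqrt[4]{1-x}\,F(1/4,1/2,5/4;1-x).$$
The elementary algebraic computation $1-x'=1-(2-x)^2/x^2=-4(1-x)/x^2$, combined with the fourth root $(-4)^{1/4}=1+\ii$ (noting $(1+\ii)^4=-4$), yields $\sqrt[4]{1-x'}=(1+\ii)\sqrt[4]{1-x}/\sqrt{x}$ on $V$; cancelling the common factor $(1+\ii)\sqrt[4]{1-x}$ then delivers the stated corollary. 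That the absorbing phase is $(-4)^{1/4}=1+\ii$ rather than one of the three other fourth roots is confirmed by evaluating at $x=1$, where both hypergeometric series equal $1$ and $1/\sqrt{x}=1$, so no residual phase survives.

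The main obstacle is precisely this branch bookkeeping: one must verify that the natural fourth-root convention $(-4)^{1/4}=1+\ii$ is exactly the element that absorbs the $\la\ii\ra$-ambiguity of Theorem \ref{th:interpret}. Once this phase is pinned down by the vanishing at $x=1$, analyticity on $V$ propagates the equality across the whole neighborhood and the clean hypergeometric identity follows.
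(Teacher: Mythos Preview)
Your proof is correct and follows essentially the same route as the paper: invoke Theorem~\ref{th:interpret}, substitute the hypergeometric-series form of the Schwarz map, compute $\sqrt[4]{1-x'}=(1+\ii)\sqrt[4]{1-x}/\sqrt{x}$ from $1-x'=-4(1-x)/x^2$, cancel the common factor $(1+\ii)\sqrt[4]{1-x}$, and pin down the $\la\ii\ra$-phase by evaluating at $x=1$. The only cosmetic difference is that the paper works directly with the integral $\int_1^x\f$ rather than the normalized Schwarz map $\jmath_\ii(P_x)$, but these differ only by the nonzero constant $(1-\ii)B(\tfrac14,\tfrac14)$, so the arguments are identical.
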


\begin{proof}
Theorem \ref{th:interpret} implies that 
$$
\int_1^{(2-x)^2/x^2} \frac{\sqrt[4]{t^2(t-1)}dt}{t(t-1)}
\equiv(1+\ii)\int_1^{x} \frac{\sqrt[4]{t^2(t-1)}dt}{t(t-1)}\quad 
\bmod \la \ii\ra \ltimes \Z[\ii].
$$
Note that
$$
\int_1^{x} \frac{\sqrt[4]{t^2(t-1)}dt}{t(t-1)}
=2\sqrt{2}(1+\ii)\sqrt[4]{1-x}F(\frac{1}{4},\frac{1}{2},\frac{5}{4},1-x),
$$
$$\sqrt[4]{1-\frac{(2-x)^2}{x^2}}=\sqrt[4]{\frac{4x-4}{x^2}}
=(1+\ii)\frac{\sqrt[4]{1-x}}{\sqrt{x}},
$$
for $0<x<1$ and $\arg(1-(2-x)^2/x^2)=\pi$. 
We can cancel the factor $\sqrt[4]{1-x}$ and determine the action of 
$\la \ii\ra \ltimes \Z[\ii]$ by the substitution $x=1$. Thus 
we have the desired identity.
\end{proof}

Let $a=a_1$ and $b=b_1$ be positive real numbers.
We define a pair $\{a_n,b_n\}_{n\in \N}$ of sequences by 
the recursive relations
\begin{equation}
\label{eq:seq1}
a_{n+1}=\frac{a_n+b_n}{2},\quad b_{n+1}=\sqrt{\frac{a_n(a_n+b_n)}{2}}.
\end{equation}


\begin{cor}[A formula in Theorem 2 in \cite{HKM}]
\label{cor:AGM1}
We have
$$\lim_{n\to\infty} a_n=\lim_{n\to\infty} b_n
=\frac{a}{F(\frac{1}{4},\frac{1}{2},\frac{5}{4};1-\frac{b^2}{a^2})^2}.$$
\end{cor}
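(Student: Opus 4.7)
The plan is to construct a conserved quantity of the iteration built from the hypergeometric function and then pass to the limit. Introduce $x_n := b_n^2/a_n^2$; a direct calculation from the recurrence gives
\[
x_{n+1} = \frac{2 a_n}{a_n+b_n}, \qquad a_{n+1}\, x_{n+1} = a_n, \qquad \frac{2 - x_{n+1}}{x_{n+1}} = \frac{b_n}{a_n},
\]
so that $1 - (2-x_{n+1})^2/x_{n+1}^2 = 1 - x_n$. Substituting $x = x_{n+1}$ into Corollary \ref{cor:eq-HGF} therefore yields
\[
F\big(\tfrac{1}{4},\tfrac{1}{2},\tfrac{5}{4};\, 1-x_n\big) = \sqrt{x_{n+1}}\, F\big(\tfrac{1}{4},\tfrac{1}{2},\tfrac{5}{4};\, 1-x_{n+1}\big).
\]
Squaring this identity and combining it with $a_n = a_{n+1}x_{n+1}$ shows at once that
\[
J_n := \frac{a_n}{F\big(\tfrac{1}{4},\tfrac{1}{2},\tfrac{5}{4};\, 1-x_n\big)^2}
\]
satisfies $J_{n+1} = J_n$; hence $J_n$ is constant in $n$ and equal to $J_1 = a\,/\,F(\tfrac{1}{4},\tfrac{1}{2},\tfrac{5}{4};\,1-b^2/a^2)^2$.

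It then remains to verify that $a_n$ and $b_n$ share a common limit $L$. The sign of $a_n - b_n$ alternates step by step, but $\min(a_n,b_n)$ is nondecreasing, $\max(a_n,b_n)$ is nonincreasing, and from the factorisation $b_{n+1}^2 - a_{n+1}^2 = (a_n+b_n)(a_n-b_n)/4$ together with the bound $a_{n+1}+b_{n+1} > a_{n+1} = (a_n+b_n)/2$ one obtains the contraction $|a_{n+1}-b_{n+1}| \le \tfrac{1}{2}|a_n-b_n|$. Thus $a_n,b_n \to L$ and $x_n \to 1$; since $F(\tfrac{1}{4},\tfrac{1}{2},\tfrac{5}{4};\,0) = 1$, passing to the limit in $J_n$ gives $L = J_1$, as claimed. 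The only mild subtlety is that Corollary \ref{cor:eq-HGF} is stated ``around $x=1$'' while the iteration momentarily produces $x_{n+1} \in (1,2)$ (for $n$ odd, starting from $a>b>0$); this is resolved by analytic continuation, since both sides of the identity are analytic on the strip $0<x<2$, a region that contains every $x_n$ actually encountered.
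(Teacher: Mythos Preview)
Your argument is correct and follows essentially the same route as the paper: substitute $x=2a_n/(a_n+b_n)$ into Corollary~\ref{cor:eq-HGF} to obtain the invariance of $a_n/F(\tfrac14,\tfrac12,\tfrac54;1-b_n^2/a_n^2)^2$, then pass to the limit using $F(\cdots;0)=1$. The paper cites \cite[Lemma~1]{HKM} for the common limit where you give a self-contained contraction estimate, and you also make explicit the analytic-continuation step (extending the identity of Corollary~\ref{cor:eq-HGF} from a neighbourhood of $x=1$ to all of $(0,2)$) that the paper uses tacitly when plugging in $x=2a/(a+b)>1$.
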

\begin{proof}
We can show that the sequences $\{a_n\}$ and $\{b_n\}$  converge and 
$\lim\limits_{n\to \infty} a_n=\lim\limits_{n\to \infty} b_n$
by Lemma 1 in \cite{HKM}.
Substitute $x=2a/(a+b)$ into the identity between hypergeometric series 
in Corollary \ref{cor:eq-HGF}.
Since 
$$\frac{2-x}{x}=\frac{b}{a},$$
we have 
$$
\frac{\sqrt{a+b}}{\sqrt{2a}}
F(\frac{1}{4},\frac{1}{2},\frac{5}{4};1-\frac{b^2}{a^2})
=F(\frac{1}{4},\frac{1}{2},\frac{5}{4};1-\frac{2a}{a+b}),$$
\begin{align*}
& \frac{a}{F(\frac{1}{4},\frac{1}{2},\frac{5}{4};1-\frac{b^2}{a^2})^2}
=\frac{(a+b)/2}
{F\left(\frac{1}{4},\frac{1}{2},\frac{5}{4};1-
  \left({\frac{\sqrt{a(a+b)/2}}
{(a+b)/2}}\right)^2\right)^2}\\
&=\frac{a_2}{F(\frac{1}{4},\frac{1}{2},\frac{5}{4};1-\frac{b_2^2}{a_2^2})^2}=
\cdots
=\frac{a_n}{F(\frac{1}{4},\frac{1}{2},\frac{5}{4};1-\frac{b_n^2}{a_n^2})^2}.
\end{align*}
The last term is equal to  $\lim\limits_{n\to \infty} a_n$ 
since $\lim\limits_{n\to\infty}\dfrac{b_n^2}{a_n^2}=1$ and 
$F(\frac{1}{4},\frac{1}{2},\frac{5}{4};0)=1$. 
\end{proof}

Hence we see that the $(1+\ii)$-multiple formula (\ref{eq:(1+i)-mul}) 
in Theorem \ref{th:(1+i)-multiple} implies 
this limit formula for the sequences defined by 
the mean iteration (\ref{eq:seq1}).

\subsection{Limit formula by $F(\frac{1}{6},\frac{1}{2},\frac{7}{6};x)$}
Theorem \ref{th:(1+z)-multiple} is interpreted as follows.
\begin{theorem}
\label{th:interpret2}
Let 
$P_x=(x,\sqrt[6]{x^3(x-1)})$ be a point of the curve $C_\z$.
We set 
$$P_{x'}=
\left(\frac{x(9-8x)^2}{(4x-3)^3},
\ex\big(\frac{1}{12}\big)\sqrt{3}\sqrt[6]{x^3(x-1)}\frac{9-8x}{(4x-3)^2}
\right)\in C_\z.$$
Then we have
$$\int_{P_1}^{P_{x'}}\psi\equiv(1+\z)\int_{P_1}^{P_x}\psi\quad \mod \la \z\ra
\ltimes \Z[\w].
$$
\end{theorem}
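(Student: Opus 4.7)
The plan is to invoke Theorem~\ref{th:(1+z)-multiple} and rephrase its content via the Abel--Jacobi map $\jmath_\z$. Set $z = \jmath_\z(P_x)$, so that, modulo $L_\z$, one has $z \equiv \int_{P_1}^{P_x}\psi_1$ with $\psi_1 = \psi/((1-\z^2)B(\tfrac{1}{3},\tfrac{1}{6}))$. Theorem~\ref{th:(1+z)-multiple} then gives
$$
\jmath_\z^{-1}((1+\z)z)
= \Big(\frac{x(9-8x)^2}{(4x-3)^3},\ \ex(\tfrac{1}{12})\sqrt{3}\,u\,\frac{9-8x}{(4x-3)^2}\Big),
$$
which, up to a sixth root of unity in its second coordinate, coincides with the point $P_{x'}$ from the statement.

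Since the action of $\rho_\z$ on $C_\z$ multiplies $u$ by $\z$ and satisfies $\jmath_\z\circ\rho_\z=\z\cdot\jmath_\z$, this sixth-root ambiguity translates into an action of $\la\z\ra$ on the $z$-side. Applying $\jmath_\z$ to both sides of the displayed equality therefore yields
$$
\int_{P_1}^{P_{x'}}\psi_1 \equiv (1+\z)\int_{P_1}^{P_x}\psi_1
\pmod{\la\z\ra\ltimes L_\z}.
$$
Multiplying through by the normalizing constant $(1-\z^2)B(\tfrac{1}{3},\tfrac{1}{6})$ converts $\psi_1$ back to $\psi$; under this rescaling the lattice $L_\z=\Z\z+\Z=\Z[\z]=\Z[\w]$ is transported onto the period lattice of $\psi$, and $\la\z\ra$-multiplication commutes with the scalar, so the congruence becomes
$$
\int_{P_1}^{P_{x'}}\psi \equiv (1+\z)\int_{P_1}^{P_x}\psi
\pmod{\la\z\ra\ltimes\Z[\w]},
$$
which is the desired claim.

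The main obstacle is the matching of sixth-root branches: one must confirm that the branch chosen for $u_{x'}=\sqrt[6]{{x'}^3(x'-1)}$ in the definition of $P_{x'}$ differs from the one arising in the $(1+\z)$-multiplication formula of Theorem~\ref{th:(1+z)-multiple} only by a factor of $\z^k$. This is checked by comparing behaviour near $x=1$, where both points lie close to $P_1$ and the branch conventions of Table~\ref{tab:argument} pin down the factor; the $\la\z\ra$-action in the statement absorbs precisely this ambiguity, so no stronger equality than the stated congruence is available or needed.
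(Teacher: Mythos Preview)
Your approach is exactly the paper's: the paper presents Theorem~\ref{th:interpret2} as a direct reinterpretation of Theorem~\ref{th:(1+z)-multiple} via the Abel--Jacobi map, with no separate proof, and that is precisely what you do. One small remark: since Theorem~\ref{th:(1+z)-multiple} gives the $u'$-coordinate \emph{exactly} as $\ex(\tfrac{1}{12})\sqrt{3}\,u\,(9-8x)/(4x-3)^2$, which is literally the second coordinate of $P_{x'}$, there is in fact no sixth-root discrepancy to absorb---the congruence already holds modulo the period lattice alone, and the $\la\z\ra$ factor in the statement is just slack; your caution there is harmless but not needed.
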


\begin{cor} The following identity holds around $x=1$:
\label{cor:(1+z)-times}
$$F\big(\frac{1}{6},\frac{1}{2},\frac{7}{6};1-x\big)
=\frac{1}{\sqrt{4x-3}}F\big(\frac{1}{6},\frac{1}{2},\frac{7}{6};
1-\frac{x(9-8x)^2}{(4x-3)^3}\big),
$$
where $\sqrt{4x-3}=1$ for $x=1$.
\end{cor}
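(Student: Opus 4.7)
The plan is to mimic the derivation of Corollary \ref{cor:eq-HGF} from Theorem \ref{th:interpret}, converting the $(1+\z)$-multiplication congruence of Theorem \ref{th:interpret2} into an identity of hypergeometric series via the explicit integral representation of the Abel-Jacobi map.

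First I would note that, for the parameters $(\a,\b,\g)=(\tfrac{1}{3},0,\tfrac{1}{2})$, the pull-back of the holomorphic $1$-form $\psi=\frac{u\,dt}{t(t-1)}$ along the section $P_x=(x,\sqrt[6]{x^3(x-1)})$ equals $t^{-1/2}(t-1)^{-5/6}\,dt$, which is exactly the integrand of $f_1(x)$ from \S2.1. Applying the closed formula of Remark \ref{rem:binZ} with $\g-\a=\tfrac{1}{6}$ and $\g=\tfrac{1}{2}$ gives, for $x$ near $1$,
$$\int_{P_1}^{P_x}\psi \;=\; f_1(x) \;=\; 6\,e^{\pi\ii/6}(1-x)^{1/6}F\Big(\frac{1}{6},\frac{1}{2},\frac{7}{6};\,1-x\Big).$$
Theorem \ref{th:interpret2} then becomes the congruence $f_1(x')\equiv(1+\z)f_1(x)\pmod{\la\z\ra\ltimes\Z[\w]}$, where $x'=x(9-8x)^2/(4x-3)^3$.

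Next comes the key algebraic simplification
$$1-x'\;=\;\frac{(4x-3)^3-x(9-8x)^2}{(4x-3)^3}\;=\;\frac{-27(1-x)}{(4x-3)^3},$$
together with the branch choices $\sqrt{4x-3}=1$ and $(1-x)^{1/6}>0$ for real $x$ slightly below $1$. The determination $(-27)^{1/6}=\sqrt{3}\,e^{\pi\ii/6}$ obtained by analytic continuation, combined with the elementary identity $1+\z=\sqrt{3}\,e^{\pi\ii/6}$, yields $(1-x')^{1/6}=(1+\z)(1-x)^{1/6}/\sqrt{4x-3}$. Substituting this into $f_1(x')\equiv(1+\z)f_1(x)$ and cancelling the common factor $(1+\z)\cdot 6\,e^{\pi\ii/6}(1-x)^{1/6}$ reduces the congruence to
$$\frac{1}{\sqrt{4x-3}}F\Big(\frac{1}{6},\frac{1}{2},\frac{7}{6};\,1-x'\Big)\;\equiv\; F\Big(\frac{1}{6},\frac{1}{2},\frac{7}{6};\,1-x\Big)\pmod{\la\z\ra\ltimes\Z[\w]}.$$

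Finally I would upgrade this to a genuine equality by taking $x\to 1$: both hypergeometric factors tend to $F(\,\cdot\,;0)=1$ and $1/\sqrt{4x-3}\to 1$, so the only element of $\la\z\ra\ltimes\Z[\w]$ compatible with both sides is the identity, removing the modular qualification. The main obstacle I anticipate is the careful bookkeeping of branches---specifically, verifying that the sixth root of $-27(1-x)/(4x-3)^3$ obtained by analytic continuation really contributes the factor $1+\z$ and that no additional $\z^k$ multiplier slips in from multi-valuedness. The normalisation at $x=1$ is precisely what converts the congruence into a clean equality.
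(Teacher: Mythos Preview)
Your proposal is correct and follows essentially the same route as the paper's proof: invoke Theorem \ref{th:interpret2}, express $\int_{P_1}^{P_x}\psi$ via Remark \ref{rem:binZ} as $6e^{\pi\ii/6}(1-x)^{1/6}F(\tfrac{1}{6},\tfrac{1}{2},\tfrac{7}{6};1-x)$, use the factorisation $1-x'=-27(1-x)/(4x-3)^3$ to cancel the $(1-x)^{1/6}$ prefactors, and then determine the residual $\z^k$ ambiguity by setting $x=1$. The paper phrases the branch uncertainty as ``there exists $k$ such that\ldots'' and then picks $k=0$ at $x=1$, whereas you compute the sixth root explicitly and argue that the normalisation at $x=1$ rules out any extra $\z^k$; these are the same argument in different words.
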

\begin{proof}
Theorem \ref{th:interpret2} implies that 
$$
\int_1^{x'} \frac{\sqrt[6]{t^3(t-1)}dt}{t(t-1)}
\equiv(1+\z)\int_1^{x} \frac{\sqrt[6]{t^3(t-1)}dt}{t(t-1)}\quad \mod
\la \z\ra\ltimes \Z[\w], \quad
$$
for $x'=\dfrac{x(9-8x)^2}{(4x-3)^3}.$
By this relation, there exists $k\in \N$ such that 
$$
\z^k\frac{\sqrt[6]{27(x-1)}}{\sqrt{4x-3}}
F\big(\frac{1}{6},\frac{1}{2},\frac{7}{6};1-\frac{x(9-8x)^2}{(4x-3)^3}\big)
=(1+\z)\sqrt[6]{1-x}F\big(\frac{1}{6},\frac{1}{2},\frac{7}{6};1-x\big).
$$
We cancel $(1+\z)\sqrt[6]{1-x}$ and $\sqrt[6]{27(x-1)}$,  
and choose $k=0$ so that the identity holds for $x=1$.
\end{proof}

By Corollary \ref{cor:(1+z)-times}, we define two means as follows. 
We solve the cubic equation 
$$\dfrac{x(9-8x)^2}{(4x-3)^3}=\frac{b^2}{a^2}$$
of the variable $x$, where we assume $0<a<b$.
 A real solution $x_0$ of this equation is 
$$\frac{3}{8} \Bigg[ \frac{\sqrt[3]{a^2}}{\sqrt{b^2-a^2}}
\Big(\sqrt[3]{b+\sqrt{b^2-a^2}}-\sqrt[3]{b-\sqrt{b^2-a^2}}\Big)+2\Bigg].$$
We set 
\begin{equation}
\label{eq:preimage}
\eta_1=b+\sqrt{b^2-a^2},\quad \eta_2=b-\sqrt{b^2-a^2}.
\end{equation}
Note that
$$\eta_1\eta_2=a^2,\quad \frac{\eta_1+\eta_2}{2}=b,\quad 
\frac{\eta_1-\eta_2}{2}=\sqrt{b^2-a^2}.$$
We express $x_0$ and $4x_0-3$ in terms of $\eta_1$ and $\eta_2$ as 
\begin{align*}
x_0&= \frac{3}{8}
\Bigg[\frac{\sqrt[3]{\eta_1}\sqrt[3]{\eta_2}}{(\eta_1-\eta_2)/2}
\Big(\sqrt[3]{\eta_1}-\sqrt[3]{\eta_2}\Big)+2\Bigg]
=\frac{3}{4}
\Big[\frac{\sqrt[3]{\eta_1}\sqrt[3]{\eta_2}}
{\sqrt[3]{\eta_1^2}+\sqrt[3]{\eta_1}\sqrt[3]{\eta_2}+\sqrt[3]{\eta_2^2}}
+1\Big]\\
&=\frac{3}{4}
\frac{(\sqrt[3]{\eta_1}+\sqrt[3]{\eta_2})^2}
{\sqrt[3]{\eta_1^2}+\sqrt[3]{\eta_1}\sqrt[3]{\eta_2}+\sqrt[3]{\eta_2^2}},\\
4x_0-3&=
\frac{3}{2}
\Bigg[\frac{\sqrt[3]{\eta_1}\sqrt[3]{\eta_2}}{(\eta_1-\eta_2)/2}
\Big(\sqrt[3]{\eta_1}-\sqrt[3]{\eta_2}\Big)+2\Bigg]-3
=\frac{3\sqrt[3]{\eta_1}\sqrt[3]{\eta_2}}
{\sqrt[3]{\eta_1^2}+\sqrt[3]{\eta_1}\sqrt[3]{\eta_2}+\sqrt[3]{\eta_2^2}}.
\end{align*}
Thus the identity in Corollary \ref{cor:(1+z)-times} is 
transformed into 
\begin{align*}
& F\Big(\frac{1}{6},\frac{1}{2},\frac{7}{6};1-
\big(\frac{\sqrt[3]{\eta_1}+\sqrt[3]{\eta_2}}{2}
\big)^2\Big/\big(\sqrt{
\frac{\eta_1^{2/3}+\eta_1^{1/3}\eta_2^{1/3}+\eta_2^{2/3}}{3}}
\big)^2\Big)\\
&=
\frac{1}{\sqrt[3]{a}}\sqrt{
\frac{\eta_1^{2/3}+\eta_1^{1/3}\eta_2^{1/3}+\eta_2^{2/3}}{3}}
F\Big(\frac{1}{6},\frac{1}{2},\frac{7}{6};1-\frac{b^2}{a^2}\Big).
\end{align*}
This formula is equivalent to 
\begin{equation}
\label{eq:recurrent}
\frac{a}{F\Big(\frac{1}{6},\frac{1}{2},\frac{7}{6};1-\frac{b^2}{a^2}\Big)}
=
\frac{m_1(a,b)}{F\Big(\frac{1}{6},\frac{1}{2},\frac{7}{6};
1-\frac{m_2(a,b)^2}{m_1(a,b)^2}\Big)}
\end{equation}
if we define two means $m_1$ and $m_2$ of positive real numbers $a$ and $b$ by 
$$
m_1(a,b)=
\frac{
a^{2/3}\sqrt{\eta_1^{2/3}+\eta_1^{1/3}\eta_2^{1/3}+\eta_2^{2/3}}}
{\sqrt{3}},\quad 
m_2(a,b)=\frac{a^{2/3}(\eta_1^{1/3}+\eta_2^{1/3})}{2},
$$
where $\eta_1$ and $\eta_2$ are given in (\ref{eq:preimage}) with conditions 
$$
-\frac{\pi}{6}<\arg(\eta_i^{1/3})<\frac{\pi}{6},\quad 
\eta_1^{1/3}\eta_2^{1/3}=a^{2/3}.$$
Let $a_1=a$ and $b_1=b$ be positive real numbers.  
We give a pair of sequences $\{a_n,b_n\}_{n\in \N}$ 
with initial terms $a_1=a$, $b_1=b$ by the recursive relations
\begin{equation}
\label{eq:seq2}
a_{n+1}=m_1(a_n,b_n),\quad b_{n+1}=m_2(a_n,b_n).
\end{equation}

\begin{cor}[A formula in Theorem 3 in \cite{HKM}]
\label{cor:AGM2}
We have 
$$
\lim_{n\to \infty} a_n=\lim_{n\to \infty} b_n
=\dfrac{a}{F\big(\frac{1}{6},\frac{1}{2},\frac{7}{6};
1-\frac{b^2}{a^2}\big)}.
$$
\end{cor}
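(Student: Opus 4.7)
The plan is to mirror the argument used for Corollary \ref{cor:AGM1}, which is the $(2,4,4)$ analogue. The key identity is the functional equation \eqref{eq:recurrent}, which was deduced earlier in the subsection from Corollary \ref{cor:(1+z)-times} by substituting the real solution $x_0$ of the cubic $x(9-8x)^2/(4x-3)^3 = b^2/a^2$ and rewriting it in terms of $\eta_1,\eta_2$. This identity says precisely that the quantity
$$Q(a,b) \;=\; \frac{a}{F\bigl(\tfrac{1}{6},\tfrac{1}{2},\tfrac{7}{6};1-\tfrac{b^2}{a^2}\bigr)}$$
is invariant under the mean-map $(a,b)\mapsto (m_1(a,b),m_2(a,b))$.

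First, I would invoke the convergence statement for the two sequences $\{a_n\}$ and $\{b_n\}$ defined by \eqref{eq:seq2}. This is the analogue of Lemma 1 in \cite{HKM} used in Corollary \ref{cor:AGM1}; granting it, both sequences tend to a common positive limit, which I denote by $L$. Thus $b_n/a_n \to 1$.

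Second, I would iterate the functional equation \eqref{eq:recurrent}: applying it once gives $Q(a,b) = Q(a_2,b_2)$, and inductively $Q(a,b) = Q(a_n,b_n)$ for every $n \ge 1$. Then taking $n\to\infty$ on the right-hand side and using the continuity of $F\bigl(\tfrac{1}{6},\tfrac{1}{2},\tfrac{7}{6};\,\cdot\,\bigr)$ at $0$ together with $F\bigl(\tfrac{1}{6},\tfrac{1}{2},\tfrac{7}{6};0\bigr)=1$, we obtain $Q(a,b) = L$, which is exactly the claimed limit formula.

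The main obstacle is really only bookkeeping, since the analytic heart of the statement is already bottled up in \eqref{eq:recurrent}. The one point that needs some care is the convergence of $\{a_n\},\{b_n\}$ to a common limit; in the $(2,4,4)$ case this was handled by citing \cite{HKM}, and here I would do the same. A possible subtlety is that $m_1, m_2$ involve cube roots of complex numbers $\eta_1,\eta_2$, but the branch conventions $-\pi/6 < \arg(\eta_i^{1/3}) < \pi/6$ and $\eta_1^{1/3}\eta_2^{1/3}=a^{2/3}$ have already been fixed, and for $0<a<b$ both $\eta_1,\eta_2$ are positive real, so $m_1(a_n,b_n), m_2(a_n,b_n)$ remain positive real along the iteration. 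Hence the iteration is well defined and the proof reduces to the two steps above.
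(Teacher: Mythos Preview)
Your proposal is correct and follows essentially the same route as the paper: cite \cite{HKM} for the common convergence of $\{a_n\}$ and $\{b_n\}$, iterate the invariance \eqref{eq:recurrent} to get $Q(a,b)=Q(a_n,b_n)$ for all $n$, and pass to the limit using $F\bigl(\tfrac16,\tfrac12,\tfrac76;0\bigr)=1$. The only cosmetic difference is that the paper cites \S5 of \cite{HKM} rather than Lemma~1 for the convergence, and does not spell out the branch discussion you added (which is harmless extra care).
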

\begin{proof}
It is shown in \S5 of \cite{HKM} that the sequences $\{a_n\}$ and $\{b_n\}$ 
converge and satisfy 
$\lim\limits_{n\to\infty}a_n=\lim\limits_{n\to\infty}b_n$. 
By (\ref{eq:recurrent}), we have 
\begin{align*}
& \frac{a}{F\Big(\frac{1}{6},\frac{1}{2},\frac{7}{6};1-\frac{b^2}{a^2}\Big)}
=\frac{a_2}{F\Big(\frac{1}{6},\frac{1}{2},\frac{7}{6};
1-\frac{b_2^2}{a_2^2}\Big)}
=\frac{a_3}{F\Big(\frac{1}{6},\frac{1}{2},\frac{7}{6};
1-\frac{b_3^2}{a_3^2}\Big)}\\
&=\cdots
=\frac{a_n}{F\Big(\frac{1}{6},\frac{1}{2},\frac{7}{6};
1-\frac{b_n^2}{a_n^2}\Big)}=\cdots =\lim_{n\to \infty} a_n,
\end{align*}
since 
$\lim\limits_{n\to \infty}\dfrac{b_n^2}{a_n^2}=1$ and 
$F\Big(\frac{1}{6},\frac{1}{2},\frac{7}{6};0\Big)=1.$
\end{proof}
Hence we see that the $(1+\z)$-multiple formula (\ref{eq:(1+z)-mul}) 
in Theorem \ref{th:(1+z)-multiple} 
implies this limit formula for the sequences defined by 
the mean iteration (\ref{eq:seq2}).

\end{document}